\theoremstyle{plain}
\newtheorem{theorem}{Theorem}[section]
\newtheorem{corollary}[theorem]{Corollary}
\newtheorem{lemma}[theorem]{Lemma}
\newtheorem{proposition}[theorem]{Proposition}
\theoremstyle{definition}
\newtheorem{remark}[theorem]{Remark}
\renewcommand{\leq}{\leqslant}
\renewcommand{\geq}{\geqslant}\usepackage{amssymb}
\newcommand{\vr}{\varepsilon}
\newcommand{\one}{\mathbf{1}}
\newcommand{\be}{\begin{equation}}
\newcommand{\ee}{\end{equation}}
\newcommand{\R}{\mathbb R}
\newcommand{\N}{\mathbb N}
\newcommand{\ball}{\mathbf{B}}
\newcommand{\A}{\mathcal{A}}
\newcommand{\cs}{\mathfrak{C}}
\newcommand{\dist}{\mathrm{dist}}
\def \dim{{\mathrm{dim}} \, }
\def \ker{{\mathrm{ker}} \, }
\def \tr{{\mathrm{Tr}} \, }
\def \Re{{\mathrm{Re}} \, }
\renewcommand{\span}{\mathrm{span}}
\def \vr{\varepsilon}
\def\iK{\mathcal {K}}
\def\iFSS{\mathcal {FSS}}
\def\iSS{\mathcal {SS}}
\def \iIN{\mathcal{IN}}
\def \iWK{\mathcal{WK}}
\def \iDP{\mathcal{DP}}
\def \iSCS{\mathcal{SCS}}
\DeclareMathOperator{\re}{Re}
\renewcommand{\Re}{\re}
\numberwithin{equation}{section}
\begin{document}

\title[Ideals of operators on $C^*$-algebras]{Ideals of operators on $C^*$-algebras
and their preduals}



\author[T. Oikhberg]{Timur Oikhberg}
\address{
Dept.~of Mathematics, University of Illinois at Urbana-Champaign, Urbana IL 61801, USA}
\email{oikhberg@illinois.edu}
\thanks{}

\author[E. Spinu]{Eugeniu Spinu}
\address{Dept. of Mathematical and Statistical Sciences, University of Alberta
Edmonton, Alberta  T6G 2G1, CANADA}
\email{spinu@ualberta.ca}
\thanks{}
\thanks{The authors acknowledge the generous support of Simons Foundation,
via its Travel Grant 210060. They would also like to thank the organizers of
Workshop in Linear Analysis at Texas A\&M, where part of this work was
carried out.}

\subjclass[2010]{Primary: 46L05; Secondary: 47B07, 47B10, 47B49, 47B65}

\date{}

\commby{}

\begin{abstract}
 In this paper we consider we study various classical operator ideals (for instance, the ideals of strictly (co)singular, weakly compact, Dunford-Pettis operators) either on $C^*$-algebras, or preduals of von Neumann algebras.
\end{abstract}

\maketitle

\section{Introduction and main results}\label{s:intro}

Let $X$ and $Y$ be Banach spaces. We denote by $B(X,Y)$ the set of all linear bounded operators acting between $X$ and $Y$.
In this paper, we address the following questions. When do certain classical operator ideals in $B(X,Y)$ coincide? What are the necessary and sufficient conditions for an operator in $B(X,Y)$ to belong to a given ideal? The spaces $X$ and $Y$ are, mostly,
either  $C^*$-algebras or their (pre)dual.  In particular, they are ordered Banach spaces. So we can consider the same questions for the subsets of positive operators in those ideals.
Here and below, ``operator ideals'' are understood in the sense of \cite{Pie}.
More precisely: the operator ideal ${\mathcal{I}}$ ``assigns'' to any pair of Banach
spaces $X$ and $Y$, a linear subspace ${\mathcal{I}}(X,Y) \subset B(X,Y)$, in such a way
that the following conditions are satisfied:
\begin{enumerate}
\item 
If $T \in B(X,Y)$ has finite rank, then $T \in {\mathcal{I}}(X,Y)$.
\item
If $T \in {\mathcal{I}}(X,Y)$, $U \in B(X_0,X)$, and $V \in B(Y,Y_0)$,
then $VTU \in {\mathcal{I}}(X_0,Y_0)$.
\end{enumerate}

Several ideals of operators will appear throughout this paper. The well-known ideals
of compact, respectively weakly compact, operators, are denoted by $\iK$ and $\iWK$.
$T \in B(X,Y)$ is called \emph{Dunford-Pettis} (DP for short) if it takes weakly
compact sets to relatively compact sets. Equivalently, it maps weakly null sequences
to norm null sequences (see e.g. \cite[Chapter 5]{AK} for more information).
$T$ is said to be \emph{finitely strictly singular} (FSS)  if for every
$\vr>0$ there exists $N \in \mathbb{N}$ such that every subspace of dimension at
least $N $ contains a normalized vector $x$ such that $\|Tx\| <\vr$.
$T$ is \emph{strictly singular} (SS) if it is not an isomorphism on any
infinite-dimensional subspace of $X$. $T$ is \emph{strictly cosingular} (SCS) if,
for any infinite-dimensional subspace $Z \subset Y$, the operator $Q_Z T$ is not
surjective ($Q_Z : Y \to Y/Z$ is the quotient map). And it is \emph{inessential} (IN)
or \emph{ Fredholm perturbation} if $I+UT$ is Fredholm for every operator $U:Y \to X$.
The set of all DP operators from $X$ to $Y$ is denoted by $\iDP(X,Y)$.
The notations $\iFSS$, $\iSS$, and $\iSCS$ have similar meaning.
It is known that $\iK(X,Y) \subseteq \iFSS(X,Y) \subseteq \iSS(X,Y) \subseteq \iIN(X,Y)$ and $ \iK(X,Y) \subseteq \iSCS(X,Y) \subseteq \iIN(X,Y)$.
In general, these inclusions are proper. The classes
$\iDP$, $\iFSS$, $\iSS$, $\iSCS$,
and $\iIN$ are closed operator ideals, in the sense of \cite{DJP}. 
For more information, the reader is referred to \cite{Ai}, \cite{Pie}.

We also use adjoint ideals: if ${\mathcal{I}}$ is an ideal, we define
${\mathcal{I}}^*(X,Y) = \{T \in B(X,Y) : T^* \in {\mathcal{I}}(Y^*,X^*)\}$.
By \cite[p.~394]{Ai}, $\iSS^*(X,Y) \subset \iSCS(X,Y)$, and
$\iSCS^*(X,Y) \subset \iSS(X,Y)$. In general, these inclusions are proper.

In the analysis of strict singularity of operators, the subprojectivity is
often used. A Banach space $X$ is said to be \emph{subprojective} if any
infinite dimensional subspace $Y \subset X$ contains a further subspace,
complemented in $X$. By \cite{Whi}, if $Y$ is subprojective, then
$\iSS^*(X,Y) \subset \iSS(X,Y)$.

Inclusions and equivalences of the ideals of operators
were first investigated in \cite{Ca, GFM}, where it was shown that
$\iK(Z)$ is the only closed ideal in $B(Z)$ when $Z$ is $\ell_p$
($1 \leq p < \infty$) or $c_0$.
The more complicated setting of operators on $C(K)$ or $L_1$ spaces was
studied in \cite{Pe65-I, Pe65-II}.
Further work in this direction includes
\cite{We, Tra, Ru}, and, most recently, \cite{Lef}. In the present paper, we
investigate these ideals on $C^*$ algebras and their (pre)duals, either
extending or finding analogues to the classical results known for $C(K)$
and abstract $L_1$-spaces.

The paper is structured as follows. In Section \ref{s:prelim}, we recall the
necessary definitions and facts.
In Section \ref{s:SS}, we study the inclusions of various ideals of operators,
and use these inclusions to classify von Neumann algebras. In particular,
we prove that a von Neumann algebra $\A$ is of finite type $I$ if and only if
$\iFSS(\A)=\iSS(A)=\iIN(\A)=\iWK(\A)$, and otherwise, these classes are different
(Theorem \ref{thm:IN=WK}).

Section \ref{s:SS C*} is largely devoted to proving Theorem \ref{t:c_0,l_2 SS}:
a non-strictly singular operator on a $C^*$ algebra must fix either
a copy of $c_0$, or a complemented copy of $\ell_2$. In Section \ref{s:DPP},
this description of strictly singular operators is used to characterize
$C^*$-algebras and their preduals with the Dunford-Pettis Property
(Proposition \ref{p:predual DPP}, Corollary \ref{c:comp l2}).

In Section \ref{s:duality} we connect strict singularity of an operator
to that of its adjoint.  We show that, for a compact $C^*$-algebra $\A$,
$\iSS(\A) = \iSS^*(\A) = \iSCS(\A) = \iIN(\A)$
(Corollaries \ref{c:C* SS T=T*} and \ref{c:SS SCS IN}).
Further, we prove that for a von Neumann algebra $\A$, the following
are equivalent: (i) $\A$ is of type $I$ finite; (ii)
$\iWK(\A_*) \subset \iSS(\A_*)$; (iii) $\iWK(\A_*) \subset \iSS^*(\A_*)$
(Proposition \ref{p:WK SS* SS}).

In Section \ref{s:incl}, we describe inclusions between positive parts of
the sets of operator ideals, proving, among other things, that
a von Neumann algebra $\A$ is purely atomic iff
$\iIN(\A_*)_+ = \iSS(\A_*)_+ = \iSCS(\A_*)_+ = \iWK (\A_*)_+=\iK(\A_*)_{+}$
(Proposition \ref{c:id in vna*}).

Finally, in Section \ref{s:prod} we study products of strictly singular
operators. In particular, we show that a von Neumann algebra $\A$
is of finite type $I$ if and only if the product of any two strictly singular
operators on $\A$ is compact (Proposition \ref{p:prod SS}).


Throughout, we use the standard Banach space facts and notation.
The word ``subspace'' refers to a closed infinite dimensional
subspace, unless specified otherwise. We say that an operator
$T \in B(X,Y)$ \emph{fixes a} (\emph{complemented}) \emph{copy of $E$}
if $X$ contains a (complemented) subspace $Z$, isomorphic to $E$, so that
$T|_Z$ is an isomorphism. $\ball(X)$ stands for the closed unit ball of $X$.
Finally, we denote by $\cs_p(H)$ the Schatten $p$-space of operators on a
Hilbert space $H$. In particular,  the ideal of compact operators $\iK(H)$
corresponds to $\cs_\infty(H)$ and its dual  to $\cs_1(H)$. 

\section{Preliminaries}\label{s:prelim}

\subsection{Classes of $C^*$-algebras}\label{ss:classes}
We are especially interested in two classes of $C^*$-algebras --
the compact ones, and the scattered ones.

A Banach algebra $\A$ is called \emph{compact} if, for any $a \in \A$,
the operator $\A \to \A : b \mapsto aba$ is compact. A $C^*$-algebra
is compact if and only if  it is of the form $(\sum_{i \in I} \iK(H_i))_{c_0}$.
We refer to \cite{Al} for other properties
of compact $C^*$-algebras.

To define a larger class of scattered $C^*$-algebras,
consider a $C^*$-algebra $\A$, and
$f \in \A^*$. Let $e \in \A^{**}$ be its support
projection. Following \cite{Je}, we call $f$ \emph{atomic} if every non-zero
projection $e_1 \leq e$ dominates a minimal projection (all projections are
assumed to ``live'' in the enveloping algebra $\A^{**}$).
Equivalently, $f$ is a sum of pure positive functionals. We say that $\A$
is \emph{scattered} if every positive functional is atomic. By \cite{Hu, Je},
the following three statements are equivalent: (i) $\A$ is scattered;
(ii) $\A^{**} = (\sum_{i \in I} B(H_i))_\infty$; (iii) the
spectrum of any self-adjoint element of $\A$ is countable.

For a $C^*$-algebra $\A$, the following statements
are equivalent: (i) $\A$ is scattered; (ii) $\A^*$ has the Radon-Nikodym Property (RNP);
(iii) $\A$ contains no isomorphic copies of $\ell_1$; (iv) $\A$ contains no
isomorphic copies of $C[0,1]$. Indeed, (i) $\Leftrightarrow$ (ii) is established
in \cite{Chu}. Furthermore (see e.g. \cite{Chu} again), $\A^*$ having the RNP
is equivalent to
the dual of every separable subspace of $\A$ being separable. Thus,
(ii) $\Rightarrow$ (iii) $\vee$ (iv). If $\A$ is not scattered, then
$\A$ contains a self-adjoint element $a$ with uncountable spectrum $K \subset \R$.
Moreover, the $C^*$-algebra generated by $a$ is isomorphic to $C_0(K)$
(the space of continuous functions on $K$, vanishing at $0$).
$K$ is a compact metric space, hence, by Miliutin's Theorem (see e.g. \cite[Theorem 4.4.8]{AK}),
$C_0(K)$ contains a copy of $C[0,1]$. This yields (iii) $\Rightarrow$ (i).
Finally, (iv) $\Rightarrow$ (iii) is clear.

A different description of scattered $C^*$-algebras can be found in \cite{Ku}.

We say that a von Neumann algebra $\A$ is (\emph{purely}) \emph{atomic} if
every projection in it has a minimal subprojection.
Equivalently, $\A$ is isomorphic to $(\sum_{i \in I} B(H_i))_\infty$.
A von Neumann algebra is said to be \emph{of finite type $I$} if
it is a finite sum of type $I_n$ von Neumann algebras, where $n$ is a positive
integer. Finite type $I$ von Neumann algebras  form a subclass of the larger class of \emph{type $I$ finite}
algebras, which are of the form $(\sum_{n \in \N} \A_n)_\infty$, with $\A_n$
being of type $I_n$, for some $n \in \N$.

\subsection{Dunford-Pettis Property}\label{ss:DPP}

In this subsection, we establish some characterizations of the
Dunford-Pettis Property for $C^*$-algebras and their preduals.
Recall that a Banach space has the \emph{Dunford-Pettis Property}
(\emph{DPP} for short) if, for any weakly null sequences
$(x_n) \subset X$ and $(x_n^*) \subset X^*$,
$\lim_n \langle x_n^*, x_n \rangle = 0$. Equivalently,
any weakly compact operator from $X$ to any Banach space is
Dunford-Pettis. The reader is referred to e.g. \cite{AK}
for more information.

The importance of the DPP for our project lies in the following
simple well-known observation:
suppose $X$ is a Banach space with the Dunford-Pettis Property.
Then any weakly compact operator from $X$ to any Banach space is
strictly singular.


By \cite{Bu, CI}, the predual of a von Neumann algebra $\A$ has the DPP
if and only if $\A$ is type $I$ finite. By \cite{CIW}, a $C^*$-algebra has the DPP
if and only if it has no infinite dimensional irreducible representations.
Further descriptions of $C^*$-algebras and their
preduals which possess the DPP can be found in Section \ref{s:DPP}
(see e.g. Proposition \ref{p:predual DPP} and Corollary \ref{c:comp l2}).

\section{Inclusions of operator ideals and classes of von Neumann algebras}\label{s:SS}

In this section, we 
study the inclusions between various ideals of operators, and use these inclusions
to characterize classes of von Neumann algebras.

First, we observe that \cite[Corollary 6]{Pf} and \cite{Pe65-I}
immediately yield the following.

\begin{proposition}\label{p:SS subs WK}
For a $C^*$-algebra $\A$ and a Banach space $X$, $\iSS(\A,X) \subset \iWK(\A,X).$
\end{proposition}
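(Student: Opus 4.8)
The plan is to exploit Pełczyński's property (V), which every $C^*$-algebra enjoys. Recall that a Banach space $Z$ has property (V) if, for every Banach space $X$, any operator $T \in B(Z,X)$ that is \emph{not} weakly compact fixes a copy of $c_0$; equivalently, every unconditionally converging operator out of $Z$ is weakly compact. The role of \cite[Corollary 6]{Pf} is to supply exactly this for $\A$: every $C^*$-algebra has property (V). This is the one genuinely nontrivial input, and it is precisely where the noncommutative structure is used (the commutative case being the older $C(K)$ result).

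Granting property (V), the argument is short. Suppose $T \in \iSS(\A,X)$. I claim $T$ cannot fix a copy of $c_0$. Indeed, by the definition used here, fixing a copy of $c_0$ means there is an infinite-dimensional subspace $Z_0 \subset \A$, isomorphic to $c_0$, on which $T$ restricts to an isomorphism onto its image; this directly contradicts strict singularity, which forbids $T$ from being an isomorphism on any infinite-dimensional subspace. Put differently, every strictly singular operator is unconditionally converging, and the classical identification of unconditionally converging operators as precisely those failing to fix a copy of $c_0$ is the content of \cite{Pe65-I}.

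Therefore $T$ does not fix a copy of $c_0$, and property (V) forces $T$ to be weakly compact, i.e.\ $T \in \iWK(\A,X)$. Since $T \in \iSS(\A,X)$ was arbitrary, this yields $\iSS(\A,X) \subset \iWK(\A,X)$. There is essentially no analytic obstacle to overcome: both nontrivial ingredients are imported, with the hard work (property (V) for noncommutative $C^*$-algebras) done in \cite{Pf} and the elementary equivalence between strict singularity and failure to fix $c_0$ being classical \cite{Pe65-I}. The only point requiring a little care is to match the precise formulation of property (V) in \cite{Pf} (phrased via unconditional convergence or via $c_0$-fixing) with the definition of strict singularity adopted above; once these are aligned, the inclusion is immediate.
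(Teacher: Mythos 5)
Your argument is correct and is exactly the one the paper intends: it cites \cite[Corollary 6]{Pf} (property (V) for $C^*$-algebras) together with \cite{Pe65-I} (the identification of non-unconditionally-converging operators as those fixing a copy of $c_0$) and states that these "immediately yield" the proposition. Your write-up simply makes that two-line deduction explicit, so it matches the paper's proof in both substance and ingredients.
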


\begin{proposition}\label{p:IN subs WK}
Let $\A$ be either  a separable  $C^{*}$-algebra, or a von Neumann algebra.
Then $\iIN(\A) \subset \iWK(\A)$.
\end{proposition}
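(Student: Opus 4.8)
The plan is to prove the contrapositive: if $T \in B(\A)$ is not weakly compact, then $T$ is not inessential. The engine is Pfitzner's theorem \cite{Pf}, by which every $C^*$-algebra (in particular $\A$) has Pe\l czy\'nski's property (V); hence a non-weakly-compact $T$ is not unconditionally converging and therefore fixes a copy of $c_0$ (Pe\l czy\'nski's characterization of unconditionally converging operators as those not fixing $c_0$, see \cite{AK}). Using the $C^*$-structure one can moreover arrange this copy to be disjointly supported: there are pairwise orthogonal positive norm-one elements $(x_k)$ with $T$ bounded below on $\ov{\span}\{x_k : k\in\N\} \cong c_0$.

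The common reduction is a short Fredholm computation. Suppose $T$ is an isomorphism on an infinite-dimensional subspace $W$ (so $TW$ is closed and $T|_W : W \to TW$ is invertible) and that $TW$ is complemented in $\A$ via a projection $P$. Put $U = (T|_W)^{-1}P : \A \to \A$. For $w \in W$ we have $Tw \in TW$, so $PTw = Tw$ and $UTw = w$; hence $I - UT$ vanishes on $W$, so $I - UT$ has infinite-dimensional kernel and is not Fredholm. Taking $U' = -U$ then shows $I + U'T$ is not Fredholm, i.e. $T \notin \iIN(\A)$. Thus it suffices to fix an infinite-dimensional $W$ whose $T$-image is complemented in $\A$. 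For a separable $C^*$-algebra this is immediate: take $W \cong c_0$ as above; then $TW \cong c_0$ is a separable subspace of the separable space $\A$, hence complemented by Sobczyk's theorem (\cite{AK}). This is the only place separability is used, and it is essential, since $c_0$ is never complemented in a von Neumann algebra (a Grothendieck space).

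The von Neumann case therefore requires upgrading the copy of $c_0$ to a copy of $\ell_\infty$, whose images are automatically complemented by injectivity of $\ell_\infty$. Replacing the $x_k$ by suitable spectral projections, I may assume the $(p_k)$ are pairwise orthogonal nonzero projections with $T$ bounded below on $\ov{\span}\{p_k : k\in\N\} \cong c_0$. Since $\A$ is a von Neumann algebra, $\lambda \mapsto \Phi(\lambda) := \sum_k \lambda_k p_k$, with the sum taken in the weak$^*$ topology, defines a normal isometric $\ast$-homomorphism $\Phi : \ell_\infty \to \A$ extending the embedding of $c_0$. The operator $T\Phi : \ell_\infty \to \A$ cannot be weakly compact: if it were, it would be unconditionally converging, so the weakly unconditionally Cauchy series $\sum e_k$ would map to an unconditionally convergent series $\sum Tp_k$, forcing $\|Tp_k\| \to 0$, contrary to $T$ being bounded below on the $p_k$. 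By Rosenthal's theorem on operators out of $\ell_\infty$, there is an infinite $M \subseteq \N$ with $T\Phi|_{\ell_\infty(M)}$ an isomorphism. Then $W := \Phi(\ell_\infty(M)) \cong \ell_\infty$ is a subspace of $\A$ on which $T$ is an isomorphism, and $TW \cong \ell_\infty$ is complemented in $\A$ by injectivity; the reduction above then yields $T \notin \iIN(\A)$.

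The main obstacle is the von Neumann step: producing pairwise orthogonal projections on whose closed span $T$ is bounded below (the non-commutative ``disjointification'' of a non-weakly-compact operator extracted from property (V)), and then passing from the fixed $c_0$ to a fixed $\ell_\infty$ through the weak$^*$-continuous extension $\Phi$ together with Rosenthal's theorem. The separable case, by contrast, is routine once property (V) and Sobczyk's theorem are in hand.
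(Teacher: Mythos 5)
Your proof follows essentially the same route as the paper's: Pfitzner's theorem produces a disjointly supported copy of $c_0$ on which $T$ is bounded below, Sobczyk handles the separable case, in the von Neumann case one passes to the $\ell_\infty$-span of the disjoint elements and applies Rosenthal's theorem plus the injectivity of $\ell_\infty$, and the concluding Fredholm reduction is identical. The one wrinkle is your unjustified claim that the disjoint self-adjoint elements may be replaced by spectral projections with $T$ still bounded below on their closed span --- nothing in the construction gives this --- but the step is also unnecessary: as in the paper, the map $\lambda \mapsto \sum_k \lambda_k y_k$ already embeds $\ell_\infty$ isometrically (the $y_k$ being disjoint, normalized and self-adjoint), which is all that Rosenthal's theorem requires.
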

\begin{proof}
It suffices to show that, for any $T \notin \iWK(\A)$, there exists an
infinite dimensional subspace $M$ such that $T(M)$ is complemented.
Indeed, then there exists $S : T(M) \to M$ so that $ST|_M = I_M$.
Denote by $P$ a projection onto $T(M)$, and note that $I-SPT$ is not Fredholm.

Since $T^{*}$ is not weakly compact, \cite[Theorem~1]{Pf} yields $\vr > 0$ and
a disjoint normalized sequence of self-adjoint elements $x_n \in A$, spanning a subspace isomorphic to  $c_0$,  such that
$\sup_{f\in \ball(X^*)}{|T^{*}f(x_n)|}>\vr$ for every $n$. In particular,
$T|_{\span[x_n:n \in \N]}$ is not weakly compact. By \cite[Theorem 5.5.3]{AK}
 there exists a subspace $E$ of $\span[x_n : n \in \N]$, isomorphic to $c_0$, so that $T|_E$
is an isomorphism. Using a gliding hump argument, we can assume that
$E = \span[y_m : m \in \N]$, where $(y_m)$ is a normalized block basis of $(x_n)$
(hence the operators $y_m$ are also disjoint, in the sense that
$y_m^* y_k = y_m y_k^* = 0$ if $k \neq m$).

If $\A$ is separable, then $M = E$ works, since $c_0$ is separably injective.
If $\A$  is a von Neumann algebra, consider the space $F \subset \A$ of operators
$\sum_m \omega_m y_m$, with $\sup_m |\omega_m| < \infty$. Then $F$ is isometric to
$\ell_\infty$, and $T|_F : F \to \A$ is not weakly compact. By \cite[Theorem~5.5.5]{AK},
$F$ contains a subspace $M \approx \ell_\infty$, so that $T|_M$ is an isomorphism.
The injectivity of $\ell_\infty$ yields  that $T(M)$ is complemented.
\end{proof}

\begin{remark}\label{r:IN not WK}
For general $C^*$-algebras $\A$, the inclusion $\iIN(\A) \subset \iWK(\A)$
does not hold, even in the commutative case.
Indeed, consider $\A = c_0 \oplus \ell_\infty$. Then the operator
$T = \begin{pmatrix} 0 & 0 \\ i & 0 \end{pmatrix} \in B(\A)$ ($i$ is an isometric
embedding of $c_0$ into $\ell_\infty$) is not weakly compact. However,
reasoning as in the proof of Theorem \ref{thm:IN=WK}, we conclude that $T \in \iIN(\A)$.
\end{remark}

This leads to a characterization of von Neumann algebras of finite type $I$
(that is, finite direct sums of von Neumann algebras of type $I_n$, with
finite $n$).

\begin{theorem}\label{thm:IN=WK}
A von Neumann algebra $\A$ is of finite type $I$ if and only if
$\iFSS(\A)=\iSS(A)=\iIN(\A)=\iWK(\A)$. Moreover, if $\A$ is not of finite type $I$,
then all of these classes are different.
\end{theorem}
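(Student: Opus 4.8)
The plan is to reduce the whole statement to the single inclusion $\iWK(\A)\subseteq\iFSS(\A)$. Combining the general inclusions $\iFSS\subseteq\iSS\subseteq\iIN$ with Proposition~\ref{p:IN subs WK} gives, for every von Neumann algebra,
\[
\iFSS(\A)\subseteq\iSS(\A)\subseteq\iIN(\A)\subseteq\iWK(\A),
\]
so the four classes coincide exactly when $\iWK(\A)\subseteq\iFSS(\A)$, while ``all different'' means that each of the three displayed inclusions is proper. Both implications are attacked through this chain.

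\emph{Forward direction.} Assume $\A$ is finite type $I$, say $\A=\bigoplus_{k=1}^{m}M_{n_k}(Z_k)$ with each $Z_k\cong C(\Omega_k)$ abelian. As the $M_{n_k}$ are finite-dimensional, $\A$ is Banach-isomorphic to a $C(K)$-space and all its irreducible representations are finite-dimensional, so $\A$ has the DPP by \cite{CIW}. The observation of \S\ref{ss:DPP} then yields $\iWK(\A)\subseteq\iSS(\A)$, whence $\iSS=\iIN=\iWK$ by the chain above. There remains $\iWK(\A)\subseteq\iFSS(\A)$, i.e.\ $\iSS(C(K))\subseteq\iFSS(C(K))$, which I would prove by contraposition: if $T$ is not finitely strictly singular, pick $\vr>0$ and subspaces $E_m\subseteq C(K)$ with $\dim E_m\to\infty$ and $\|Tx\|\geq\vr\|x\|$ for $x\in E_m$; using that high-dimensional subspaces of $C(K)$ carry almost disjointly supported normalized systems, a gliding-hump and small-perturbation argument should extract a normalized sequence spanning a copy of $c_0$ on which $T$ stays bounded below. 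Then $T$ fixes $c_0$ and so is not weakly compact (cf.\ \cite[Theorem~5.5.3]{AK}), contradicting $T\in\iSS=\iWK$.

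\emph{Converse direction.} Assume $\A$ is not finite type $I$. The type decomposition furnishes a distinguished subspace $\B\subseteq\A$ carrying a norm-one projection $\mathcal E:\A\to\B$ (a corner, or the range of a slice map): a corner $e\A e\cong B(H)$ with $\dim H=\infty$ whenever $\A$ has a properly infinite (type $I_\infty$, $II_\infty$ or $III$) part, a copy of $(\bigoplus_j M_{n_j})_\infty$ with $n_j\to\infty$ when $\A$ is type $I$ but not finite type $I$, and an analogous model built from $\overline{\bigcup_j M_{n_j}}$ in the remaining finite cases. Since $\mathcal E\,\iota=\mathrm{id}_\B$ for the inclusion $\iota:\B\to\A$, one has $\iota S\mathcal E\in\mathcal I(\A)\iff S\in\mathcal I(\B)$ for every operator ideal $\mathcal I$, so it suffices to separate the four ideals on $\B$. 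There I would exhibit three operators: a strictly singular operator that is bounded below on subspaces of dimension $\to\infty$, built from the copies of $\ell_2^{\,n_j}$ (or $\ell_2^{\,n}$) sitting in the matrix blocks as in the classical $\ell_p$–$\ell_q$ examples, giving $\iFSS\subsetneq\iSS$; an inessential operator fixing an infinite-dimensional subspace whose image is never complemented, giving $\iSS\subsetneq\iIN$; and the projection onto a complemented copy of $\ell_2$ — a minimal left ideal $\B e$ in the $B(H)$-model, or the column system across the blocks $M_{n_j}$, which is complemented by a weak-limit of the finite-stage projections, using reflexivity of $\ell_2$ — which is weakly compact yet fixes a complemented subspace and so, reasoning as in Proposition~\ref{p:IN subs WK}, is not inessential, giving $\iIN\subsetneq\iWK$.

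\emph{Main obstacle.} I expect the forward step $\iSS(C(K))\subseteq\iFSS(C(K))$ to be the crux: producing a genuine $c_0$ on which a non-FSS operator remains bounded below is where the $C(K)$-structure must be exploited carefully. In the converse the delicate points are the uniform reduction to the models $B(H)$ and $(\bigoplus_j M_{n_j})_\infty$ over all non-finite-type-$I$ algebras, and the construction of the inessential non-strictly-singular operator, where inessentiality (Fredholmness of $I-U\,\cdot$ for every $U$) must be verified directly.
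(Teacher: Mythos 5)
Your reduction to the chain $\iFSS(\A)\subseteq\iSS(\A)\subseteq\iIN(\A)\subseteq\iWK(\A)$ and the overall architecture (forward direction via the $C(K)$-isomorphism and the DPP; converse via an embedded model and three separating examples, transferred through $\iota S\mathcal{E}$) match the paper. But the step you yourself flag as the crux --- $\iWK(C(K),Y)\subseteq\iFSS(C(K),Y)$ --- has a genuine gap: the geometric claim that high-dimensional subspaces of $C(K)$ carry almost disjointly supported normalized systems is false. $C(K)$ contains uniformly isomorphic copies of $\ell_2^n$ for all $n$ (e.g.\ the span of the coordinate functionals in $C(\ball(\ell_2),w)$, or Dvoretzky-type subspaces of $\ell_\infty^N$), and no normalized vector in such a subspace has small support in any useful sense; so the proposed gliding-hump extraction of a $c_0$-sequence from the subspaces $E_m$ cannot get started. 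The paper's Lemma~\ref{l:FSS=WK} gets around this entirely differently: a weakly compact operator on $C(K)$ is a norm limit of operators factoring through $\ell_2$ \cite[Theorem~15.2]{DJT}, those factors are $2$-summing by Grothendieck's theorem, $2$-summing operators are FSS because $\pi_2(I_{E})=\sqrt{\dim E}$ (Remark~\ref{FSS-2-sum}), and $\iFSS$ is a closed ideal. Without some such quantitative input (a $\pi_2$-estimate, not a support argument), your forward direction does not close.

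The converse is also materially under-specified compared with what the paper does, though here the ideas are recoverable. You propose a case-by-case reduction through the type decomposition to models $B(H)$, $(\bigoplus_j M_{n_j})_\infty$, or weak closures of UHF-type subalgebras, with a norm-one conditional expectation in each case; the paper avoids this entirely by quoting \cite{Pop}: every von Neumann algebra that is not of finite type $I$ contains a completely isometric copy of $B(\ell_2)$, and Arveson--Wittstock extension supplies the completely contractive left inverse $S$. All three separating operators are then built once, inside $B(\ell_2)$: the formal identity $\ell_2\to(\oplus_k\ell_2^{2^{k-1}})_{c_0}$ for $\iFSS\subsetneq\iSS$; the operator $\begin{pmatrix}0&0\\ i&0\end{pmatrix}$ on the complemented subspace $\ell_2\oplus_\infty\ell_\infty$ (with $i:\ell_2\to\ell_\infty$ an isometric embedding), whose inessentiality is verified via Pietsch's kernel criterion \cite{P} together with the fact that every operator from $\ell_\infty$ to a separable space is strictly singular, for $\iSS\subsetneq\iIN$; and a projection onto a copy of $\ell_2$ for $\iIN\subsetneq\iWK$. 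Your third example and the transfer device are correct; your second is only a description (``an inessential operator fixing a subspace whose image is never complemented'') with the inessentiality, which is the whole point, left unverified; and in the type $II_1$ case your proposed model is not obviously one on which the first two examples can be built. You should replace the type-decomposition reduction by the single appeal to \cite{Pop}.
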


For the proof we need a remark and a consequent lemma.

\begin{remark}\label{FSS-2-sum}
Every $2$-summing operator between Banach spaces is finitely strictly singular.
Indeed, let $X$ and $Y$  be Banach spaces and $T:X \to Y$ a $2$-summing operator. 
If $T$ is not $FSS$, then there exists $c>0$ and a sequence of
$n$-dimensional subspaces $E_n \subset X$ such that $\|Tx\| \geq c\|x\|$
for every $x\in E_n$.  Now recall that, for $E_n$, we have
$\pi_2(I_{E_n}) = \sqrt{n}$ \cite[Theorem 1.11]{Pis}.
Therefore, $ \pi_2(T) \geq \pi_2(T|E_n) \geq c \pi_2(I_{E_n}) = c n^{1/2}$
holds for every $n$, leading to a contradiction.
\end{remark}

\begin{lemma}\label{l:FSS=WK}
$\iSS(C(K),Y)=\iFSS(C(K),Y)=\iWK(C(K),Y)$ for any Banach space $Y$ and any
compact Hausdorff topological space $K$.
\end{lemma}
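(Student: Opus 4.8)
The plan is to obtain the chain $\iFSS(C(K),Y) \subseteq \iSS(C(K),Y) \subseteq \iWK(C(K),Y)$ for free, and then to reverse it. The first inclusion is part of the general hierarchy of closed ideals recorded in the introduction, and the second is immediate from Proposition \ref{p:SS subs WK}, since $C(K)$ is a (commutative) $C^*$-algebra. Thus everything reduces to the single inclusion $\iWK(C(K),Y) \subseteq \iFSS(C(K),Y)$; that is, to showing that every weakly compact $T \colon C(K) \to Y$ is finitely strictly singular.

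To prove this I would pass to the representing measures of $T$. By Gantmacher's theorem $T^*$ is weakly compact, so the set $\mathcal M = \{T^* y^* : y^* \in \ball(Y^*)\}$ is a relatively weakly compact subset of $C(K)^* = M(K)$, the space of regular Borel measures on $K$. The key classical input is the Bartle--Dunford--Schwartz / Dunford--Pettis description of such sets: there is a probability measure $\lambda$ on $K$ so that every $\mu \in \mathcal M$ is absolutely continuous with respect to $\lambda$ and the densities $\{d\mu/d\lambda : \mu \in \mathcal M\}$ form a uniformly integrable family in $L_1(\lambda)$. Uniform integrability then yields the approximate domination I am really after: for each $\vr > 0$ there is a constant $M = M(\vr) < \infty$ with
\[
\|Tx\|_Y \;\leq\; M\,\|x\|_{L_1(\lambda)} + \vr\,\|x\|_\infty \qquad (x \in C(K)),
\]
obtained by splitting $\int |x|\,|d\mu/d\lambda|\,d\lambda$ over the sets where the density is at most $M$ and where it exceeds $M$, and taking the supremum over $\|y^*\| \leq 1$.

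The second ingredient is that the formal inclusion $J \colon C(K) \to L_1(\lambda)$ is finitely strictly singular. Indeed $J$ factors as the ($2$-summing) formal identity $C(K) \to L_2(\lambda)$ followed by the norm-one inclusion $L_2(\lambda) \to L_1(\lambda)$, so $J$ is itself $2$-summing by the ideal property of $\pi_2$; Remark \ref{FSS-2-sum} then gives that $J$ is FSS. Combining the two ingredients finishes the proof: fix $\vr > 0$, choose the corresponding $M$, and use the FSS property of $J$ to produce $N$ such that every subspace of $C(K)$ of dimension at least $N$ contains a norm-one $x$ with $\|Jx\|_{L_1(\lambda)} < \vr/M$. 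For such $x$ the displayed inequality gives $\|Tx\|_Y < 2\vr$, which is exactly what finite strict singularity of $T$ demands.

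The main obstacle is the second step: extracting from the weak compactness of $T$ a single control measure $\lambda$ together with uniform integrability of the densities, and converting this into the clean approximate $L_1(\lambda)$-domination. It is worth stressing why this quantitative route seems necessary: the naive argument via the Dunford--Pettis property of $C(K)$ only shows that $T$ kills weakly null sequences and hence is \emph{strictly} singular, but that argument sees nothing of the finite-dimensional spreading, so it cannot by itself deliver \emph{finite} strict singularity. Once the quantitative form of weak compactness is in hand, the passage to FSS is painless, riding entirely on the already-recorded fact (Remark \ref{FSS-2-sum}) that the $2$-summing inclusion $C(K) \to L_1(\lambda)$ is FSS. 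One should also verify the mild measure-theoretic points behind the control measure (that a relatively weakly compact, hence uniformly countably additive, family in $M(K)$ admits a dominating probability measure), but these are standard.
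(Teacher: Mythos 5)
Your proof is correct, but the route you take for the key inclusion $\iWK(C(K),Y) \subseteq \iFSS(C(K),Y)$ is genuinely different from the paper's. The paper quotes \cite[Section 5.5]{AK} for $\iSS(C(K),Y)=\iWK(C(K),Y)$ and then disposes of the FSS inclusion in two lines: by \cite[Theorem 15.2]{DJT} every weakly compact operator on $C(K)$ is a \emph{norm limit} of operators factoring through $\ell_2$; each such factor is $2$-summing by Grothendieck's theorem, hence FSS by Remark \ref{FSS-2-sum}, and one concludes by the closedness of the FSS ideal. You instead re-derive the quantitative content behind that approximation theorem: a control measure $\lambda$ for the relatively weakly compact set $T^*(\ball(Y^*))$, uniform integrability of the densities, and the resulting estimate $\|Tx\| \leq M\|x\|_{L_1(\lambda)} + \vr\|x\|_\infty$, after which you exploit the FSS-ness of the $2$-summing inclusion $C(K)\to L_1(\lambda)$ directly on $N$-dimensional subspaces rather than passing to a limit of operators. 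Both arguments ultimately rest on Remark \ref{FSS-2-sum} plus a Grothendieck-type $2$-summing input; yours is more self-contained (no appeal to \cite[Theorem 15.2]{DJT}, whose proof is essentially the inequality you establish) at the cost of carrying the Bartle--Dunford--Schwartz control-measure machinery explicitly, and it is in fact the commutative analogue of the construction the paper performs later in Lemma \ref{l:adjoint} for von Neumann algebras. The only points to make sure you nail down are the ones you already flag: existence of a single control measure for the (possibly nonseparable) family $T^*(\ball(Y^*))$ and the passage from uniform $\lambda$-continuity to uniform integrability; both are standard (Diestel--Uhl).
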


\begin{proof}
By \cite[Section 5.5]{AK}, $\iSS(C(K),Y)=\iWK(C(K),Y)$.
By \cite[Theorem~15.2]{DJT}, any $T \in \iWK(C(K),Y)$ is a norm
limit of a sequence of operators $(T_n)$, which factor through $\ell_2$.
By Grothendieck Theorem, the operators $T_n$'s are $2$-summing, and, by Remark~\ref{FSS-2-sum},  are FSS.  As the ideal of FSS operators is closed, $T$ must be FSS as well.
\end{proof}

\begin{proof}[Proof of Theorem \ref{thm:IN=WK}]
If $\A$ is of finite type $I$, then, by \cite[Theorem~6.6.5]{KR2}, it is
Banach space isomorphic to a commutative von Neumann algebra $\mathcal{B}$.
Lemma \ref{l:FSS=WK} and Proposition \ref{p:IN subs WK}
yield  $\iFSS({\mathcal{B}})=\iSS({\mathcal{B}})=\iIN({\mathcal{B}})=\iWK({\mathcal{B}})$.


If $\A$ is not of finite type $I$, then (see e.g. \cite{Pop})
there exists a complete isometry $J : B(\ell_2) \to \A$ (in fact,
$J$ and $J^{-1}$ are completely positive).
By Stinespring-Wittstock-Arveson-Paulsen Theorem, there exists a
complete contraction $S : \A \to B(\ell_2)$, so that
$S = J^{-1}$ on $J(B(\ell_2))$. Denote by $E_{ij}$ the matrix units
in $B(\ell_2)$, and consider the map $T$, taking $E_{1j}$ to $E_{kj}$
($k$ is the unique integer satisfying $2^{k-1} \leq j < 2^k$), and
$E_{ij}$ to $0$ for $i > 1$. Clearly, $T$ can be viewed as a ``formal identity''
from $\ell_2$ to $(\oplus_k {\ell_2^{2^{k-1}}})_{c_0}$, thus it is not
finitely strictly singular. Hence, $JTS \in \iSS(\A) \setminus \iFSS(\A)$.

Moreover, $B(\ell_2)$ contains a subspace $Z = \ell_2 \oplus_\infty \ell_\infty$,
complemented via a projection $P$.
Consider $T = \begin{pmatrix} 0 & 0 \\ i & 0 \end{pmatrix} \in B(Z)$, where $i$ is an isometric
embedding of $\ell_2$ into $\ell_\infty$. Clearly the operator $JTPS$
is not strictly singular. We shall show that $T \in \iIN(Z)$. By \cite{P}  it suffices to show that $I - AT$ has finite dimensional kernel for any
$A=\begin{pmatrix} A_1 & A_2 \\ A_3 & A_4 \end{pmatrix} \in B(Z)$. Indeed,  $\ker (I - AT)$
consists of all vectors $x \oplus y$ ($x \in \ell_2$, $y \in \ell_\infty$)
satisfying $x \in \ker (I - A_2 i)$, and $y = A_4 i x$.
By \cite[Section 5.5]{AK}, any operator from $\ell_\infty$ to a separable
Banach space is strictly singular.
Thus, $I - A_2 i$ is Fredholm, hence its kernel is finite dimensional.
Therefore, $\ker (I - AT)$ is finite dimensional.

To finish the proof, note that there is a projection from $B(\ell_2)$ on a copy of $\ell_2$,
which is a weakly compact but, evidently, not an inessential operator.
\end{proof}
Combining Propositions \ref{p:IN subs WK} and \ref{l:FSS=WK} we obtain the following.
\begin{corollary}\label{c:IN=WK}
For a von Neumann algebra $\A$, the following are equivalent:\\
(1) $\A$ is not of finite type $I$, (2) $\iIN(\A)$ is a proper subset of $\iWK(\A)$.
\end{corollary}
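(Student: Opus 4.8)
The plan is to prove Corollary~\ref{c:IN=WK} as a direct consequence of Theorem~\ref{thm:IN=WK} together with Proposition~\ref{p:IN subs WK}, by carefully assembling the implications already established. The corollary asserts the equivalence of (1) $\A$ is not of finite type $I$ and (2) $\iIN(\A) \subsetneq \iWK(\A)$, so I would prove the two implications separately.

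For the implication $(1) \Rightarrow (2)$, I would argue as follows. By Proposition~\ref{p:IN subs WK}, since a von Neumann algebra always satisfies its hypotheses, we always have the inclusion $\iIN(\A) \subseteq \iWK(\A)$; thus the content of (2) is only that this inclusion is \emph{strict}. Now if $\A$ is not of finite type $I$, then Theorem~\ref{thm:IN=WK} tells us that all four classes $\iFSS(\A), \iSS(\A), \iIN(\A), \iWK(\A)$ are pairwise different. In particular $\iIN(\A) \neq \iWK(\A)$, which combined with the inclusion above gives $\iIN(\A) \subsetneq \iWK(\A)$, i.e.\ (2) holds. Concretely, the final paragraph of the proof of Theorem~\ref{thm:IN=WK} already exhibits the witness: the projection from $B(\ell_2)$ onto a copy of $\ell_2$, transported into $\A$ via the complete isometry $J$, is weakly compact but not inessential.

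For the converse $(2) \Rightarrow (1)$, I would argue by contraposition. Suppose $\A$ \emph{is} of finite type $I$. Then Theorem~\ref{thm:IN=WK} asserts the equality $\iFSS(\A)=\iSS(\A)=\iIN(\A)=\iWK(\A)$; in particular $\iIN(\A)=\iWK(\A)$, so the inclusion in (2) is not proper, and (2) fails. Hence $(2) \Rightarrow (1)$.

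I do not expect any serious obstacle here, since both directions are immediate from the quoted theorem: the corollary is essentially an extraction of the single equivalence ``not finite type $I$ $\Leftrightarrow$ $\iIN \neq \iWK$'' from the full four-way comparison of Theorem~\ref{thm:IN=WK}. The only point requiring a moment's care is to notice that Proposition~\ref{p:IN subs WK} supplies the always-true inclusion $\iIN(\A) \subseteq \iWK(\A)$, so that ``$\neq$'' upgrades automatically to ``$\subsetneq$'' in direction $(1)\Rightarrow(2)$; without invoking that inclusion, one only gets inequality of the two sets rather than the asserted proper containment. (Note the header line ``Combining Propositions~\ref{p:IN subs WK} and \ref{l:FSS=WK}'' preceding the corollary is slightly imprecise, as \ref{l:FSS=WK} is a lemma and the real engine is Theorem~\ref{thm:IN=WK}; but the mathematical substance is exactly as described.)
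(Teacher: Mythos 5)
Your proposal is correct and is essentially the paper's own argument: the paper gives no separate proof, deriving the corollary by combining the always-valid inclusion $\iIN(\A)\subseteq\iWK(\A)$ for von Neumann algebras (Proposition~\ref{p:IN subs WK}) with the dichotomy of Theorem~\ref{thm:IN=WK}, exactly as you do. Your side remark that the paper's header cites Lemma~\ref{l:FSS=WK} where the real input is Theorem~\ref{thm:IN=WK} is a fair observation, and your point that the inclusion is needed to upgrade ``$\neq$'' to ``$\subsetneq$'' is the one genuinely non-trivial step, correctly handled.
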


\begin{proposition}\label {p:cont in WK} 
Let $\A$ be a von Neumann algebra. Then the ideals $\iSS(X,\A_*)$,  $\iSCS(X,\A_*)$,
and $\iIN(X,\A_*)$ are subsets of $\iWK(X,\A_*)$.
\end{proposition}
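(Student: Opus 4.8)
The plan is to reduce everything to the inessential ideal and then mirror the strategy of Proposition \ref{p:IN subs WK}. Since $\iSS(X,\A_*) \subseteq \iIN(X,\A_*)$ and $\iSCS(X,\A_*) \subseteq \iIN(X,\A_*)$ (as recorded in Section \ref{s:intro}), it suffices to prove $\iIN(X,\A_*) \subseteq \iWK(X,\A_*)$. Equivalently, given $T \notin \iWK(X,\A_*)$ I must produce an operator $U : \A_* \to X$ for which $I + UT$ fails to be Fredholm. As in Proposition \ref{p:IN subs WK}, this follows once I find an infinite dimensional subspace $M \subseteq X$ such that $T|_M$ is an isomorphism onto its image and $T(M)$ is complemented in $\A_*$: taking $S = (T|_M)^{-1} : T(M) \to M$ and a projection $P : \A_* \to T(M)$, the operator $U = -SP$ satisfies $(I+UT)|_M = 0$, so $\ker (I+UT) \supseteq M$ is infinite dimensional and $I+UT$ is not Fredholm.

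To locate such an $M$, I would exploit the structure of $\A_*$ as a noncommutative $L_1$ space. Since $T$ is not weakly compact, by the Eberlein--\v{S}mulian theorem there is a sequence $(x_n) \subset \ball(X)$ such that $(Tx_n)$ has no weakly convergent subsequence. The predual of a von Neumann algebra is weakly sequentially complete, so no subsequence of $(Tx_n)$ can be weakly Cauchy; Rosenthal's $\ell_1$ theorem then forces a subsequence (which I relabel) with $(Tx_n)$ equivalent to the unit vector basis of $\ell_1$. Because $(x_n)$ is bounded and $\|\sum_n a_n T x_n\| \geq c \sum_n |a_n|$ for some $c>0$, the estimate $\|\sum_n a_n x_n\| \geq \|T\|^{-1}\|\sum_n a_n T x_n\| \geq (c/\|T\|)\sum_n |a_n|$ shows that $(x_n)$ is itself $\ell_1$-equivalent and that $T|_{\span[x_n]}$ is bounded below, hence an isomorphism onto $\span[Tx_n]$.

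It remains to arrange that the $\ell_1$-copy $\span[Tx_n]$ is complemented in $\A_*$; this is the main obstacle. Here I would invoke the fact that $\A_*$ is $L$-embedded (an $L$-summand in its bidual), together with the subsequence splitting principle for noncommutative $L_1$ spaces: after passing to a further subsequence, $Tx_n$ decomposes as a relatively weakly compact part plus a part essentially supported on pairwise orthogonal projections, and the second part must dominate since $(Tx_n)$ admits no weakly convergent subsequence. A sequence supported on pairwise orthogonal projections spans a complemented copy of $\ell_1$ in $\A_*$, and a gliding hump (small perturbation) argument shows that, for a suitable subsequence, $\span[Tx_n]$ is complemented as well. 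Setting $M = \span[x_n]$ for this final subsequence completes the reduction.

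The genuinely delicate point is this complementation step: the commutative Kadec--Pe\l czy\'nski argument must be replaced by its noncommutative analogue, and care is needed to ensure that the band projection onto the disjointly supported span transfers, through the perturbation, to a bounded projection onto $T(M)$. Everything else is either a citation (weak sequential completeness of $\A_*$, Rosenthal's theorem) or the routine Fredholm computation already carried out in Proposition \ref{p:IN subs WK}.
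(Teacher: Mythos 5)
Your overall strategy coincides with the paper's: both arguments reduce $\iSS$ and $\iSCS$ to $\iIN$ via the standard inclusions, both then show that a non-weakly-compact $T$ fixes a complemented copy of $\ell_1$ in $\A_*$, and both finish with the same Fredholm computation ($U=-SP$ kills $M$, so $I+UT$ has infinite dimensional kernel). The difference lies entirely in how the complemented $\ell_1$ copy is produced. The paper does it in one line by a citation chain: $\A_*$ is $L$-embedded, hence has Pe\l czy\'nski's property $(V^*)$ by Pfitzner's theorem, and Bombal's Proposition~1.10 states exactly that a non-weakly-compact operator into a space with $(V^*)$ fixes a complemented copy of $\ell_1$. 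You instead attempt a direct construction: Eberlein--\v{S}mulian, weak sequential completeness of $\A_*$, and Rosenthal's theorem give an $\ell_1$-sequence $(Tx_n)$, and your estimate showing that $(x_n)$ is then itself $\ell_1$-equivalent with $T$ bounded below on $\span[x_n]$ is correct. What your route buys is self-containedness and an explicit picture of the $\ell_1$ copy; what it costs is that the complementation step, which you correctly single out as the crux, is precisely the analytic content of Pfitzner's theorem, and your sketch of it is not yet a proof. Concretely: after the noncommutative subsequence splitting $Tx_n=y_n+z_n$ with $(y_n)$ relatively weakly compact and $(z_n)$ disjointly supported, the remainder $(y_n)$ is only weakly convergent, not norm-null, so the band projection onto $\span[z_n]$ does not transfer to $\span[Tx_n]$ by a naive gliding hump; one must pass to differences and extract a further subsequence with summably small errors (the noncommutative Kadec--Pe\l czy\'nski argument). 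You would either have to carry out that perturbation in detail or close the gap by citing the same results the paper invokes.
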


\begin{proof}
By \cite[Example IV.1.1]{HWW},  $\A_*$ is $L$-embedded, and, therefore,
has property $(V^*)$ by \cite{Pf:93}. Thus,  if $T \not \in \iWK(\A_*)$,
then by \cite[Proposition~1.10]{Bo}, there exists a subspace $M \subset \A$
isomorphic to $\ell_1$ such that $T(M)$ is isomorphic to $\ell_1$ and complemented
in $\A_*$ by projection $P$. This implies that $T$ is not inessential,
since the dimension of $\ker(I -T(T^{-1}P))$ is infinite.  
\end{proof}

\section{Strictly singular operators on $C^*$-algebras}\label{s:SS C*}

In this section we describe strictly singular operators acting from
a $C^*$-algebra:
 

\begin{theorem}\label{t:c_0,l_2 SS}
Suppose $\A$ is a $C^*$-algebra, and $X$ is a Banach space. For
$T \in B(\A,X)$, the following are equivalent:
\begin{enumerate}
\item $T$ is not strictly singular.
\item $T$ fixes either a copy of $c_0$, or a complemented copy of $\ell_2$.
\end{enumerate}
Moreover, if $\A$ is a von Neumann algebra, then $(1)$ and $(2)$
are equivalent to
\begin{enumerate}
\item[(2$^\prime$)] $T$ fixes either a complemented copy of $\ell_2$,
or a copy of $\ell_\infty$, complemented by a weak$^*$ continuous
projection.
\end{enumerate}
Furthermore, if $X$ is a dual space, and $T$ is weak$^*$
continuous, then a copy of $\ell_2$ mentioned in $(2^\prime)$ can be
chosen so that it is complemented by a weak$^*$-continuous projection.
\end{theorem}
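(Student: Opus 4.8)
The plan is to derive the entire statement from a single dichotomy governed by weak compactness. The two ``easy'' implications are free: any operator that is an isomorphism on an infinite-dimensional subspace is not strictly singular, so $(2)\Rightarrow(1)$ and $(2')\Rightarrow(1)$; moreover $(2')\Rightarrow(2)$ since $\ell_\infty\supset c_0$ (restrict the fixed copy of $\ell_\infty$). Thus everything reduces to producing option $(2)$, resp.\ $(2')$, from the assumption that $T$ is not strictly singular, and I would split on whether $T$ is weakly compact. If $T\notin\iWK(\A,X)$, I would simply rerun the argument of Proposition~\ref{p:IN subs WK}: Pfitzner's theorem \cite[Theorem~1]{Pf} supplies a normalized, pairwise disjoint sequence of self-adjoint elements $(x_n)\subset\A$ spanning a copy of $c_0$ with $\inf_n\|Tx_n\|>0$, and then \cite[Theorem~5.5.3]{AK} extracts a block subspace $E=\span[y_m : m\in\N]\cong c_0$ on which $T$ is an isomorphism. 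This is precisely option $(2)$ with a copy of $c_0$.

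The substantial case is $T\in\iWK(\A,X)\setminus\iSS(\A,X)$; note it is vacuous when $\A$ is commutative (cf.\ Lemma~\ref{l:FSS=WK}), so it genuinely exploits the noncommutative structure. Here $T$ is an isomorphism on some infinite-dimensional $Y\subset\A$, and since $T$ is weakly compact and $T|_Y$ is bounded below, $\ball(Y)$ is relatively weakly compact, i.e.\ $Y$ is reflexive. The goal is to locate inside $\A$ a copy of $\ell_2$, complemented in $\A$, on which $T$ is an isomorphism. I would pass to a normalized weakly null sequence $(y_n)\subset Y$ and analyze its left and right support projections in $\A^{**}$. A subsequence-splitting (Kadec--Pe{\l}czy\'{n}ski type) argument should then yield a subsequence that is either fully disjoint --- hence equivalent to the $c_0$-basis, which is impossible as $Y$ is reflexive --- or one-sidedly disjoint, i.e.\ a ``row'' or ``column,'' which is equivalent to the $\ell_2$-basis and spans a subspace complemented in $\A$ by compression with the support projections. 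Since $T$ is bounded below on $Y$, it is an isomorphism on this complemented $\ell_2$, giving option $(2)$. This dichotomy, together with the complementation of the resulting copy of $\ell_2$, is the main obstacle of the whole proof.

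For a von Neumann algebra $\A$ I would strengthen the $c_0$-conclusion to $(2')$. The disjoint self-adjoint sequence $(y_m)$ from the first paragraph generates an isometric copy of $\ell_\infty$, namely $F=\{\sum_m\omega_m y_m : \sup_m|\omega_m|<\infty\}\subset\A$, and in a von Neumann algebra $F$ is the range of a normal (weak$^*$-continuous) projection. Arguing as in Proposition~\ref{p:IN subs WK} --- via \cite[Theorem~5.5.5]{AK} and the injectivity of $\ell_\infty$ --- one obtains $M\cong\ell_\infty$ with $T|_M$ an isomorphism and $T(M)$ complemented; keeping track of the weak$^*$-closedness of the blocks makes the complementing projection weak$^*$-continuous, which is option $(2')$. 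For the concluding assertion, suppose $X=(X_*)^*$ and $T$ is weak$^*$-continuous. In the $\ell_2$-extraction above I would arrange the left/right support projections to be normal (working with weak$^*$-convergent subsequences inside $\A$), so that the compression projecting $\A$ onto the copy of $\ell_2$ is weak$^*$-continuous; weak$^*$-continuity of $T$ then ensures this compression is compatible with $T$, so the copy of $\ell_2$ is complemented by a weak$^*$-continuous projection. The essential difficulty throughout remains the reflexive case: executing the support-projection splitting in a general $C^*$-algebra, and, in the dual setting, doing so with normal projections.
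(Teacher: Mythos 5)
Your treatment of the non-weakly-compact case, and of the von Neumann algebra upgrade to a weak$^*$-complemented copy of $\ell_\infty$, coincides with the paper's (Pfitzner's theorem, a gliding hump, and the conditional expectation $a \mapsto \sum_i p_i a p_i$ composed with rank-one projections onto $\span[a_i]$). The genuine gap is exactly where you locate ``the main obstacle'': the case $T \in \iWK(\A,X)\setminus\iSS(\A,X)$. Your proposed route --- extract a normalized weakly null sequence from the reflexive subspace $Y$, split it by left/right support projections into ``disjoint'' versus ``row/column'' types, and complement the resulting $\ell_2$ by compressing with the support projections --- is only a sketch, and both the dichotomy and the complementation mechanism fail in natural examples. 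A normalized weakly null sequence in a $C^*$-algebra need not admit a subsequence whose supports are disjoint or one-sidedly aligned: a free semicircular family in a $II_1$ factor is weakly null and spans an isomorphic copy of $\ell_2$, yet every element of it has support projection equal to $1$, so no compression by support projections can produce a projection onto its span. Note also that one-sided disjointness alone does not force $\ell_2$-behaviour: if $y_n^*y_m = 0$ for $n \ne m$, then $\|\sum_n a_n y_n\|^2 = \|\sum_n |a_n|^2\, y_n y_n^*\|$, which is the $c_0$-norm when the $y_n y_n^*$ are also orthogonal; $\ell_2$-behaviour requires the ranges to pile up, which your splitting does not arrange.

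The paper's mechanism for this case is entirely different and is the real content of the theorem: by Jarchow's theorem \cite{Jar}, a weakly compact $T$ on a $C^*$-algebra satisfies $\|Ta\| \leq C\|ia\| + \vr\|a\|$ for a state-induced map $i$ into a Hilbert space, which yields, for every $\vr>0$, a factorization $\|T - VU\| < \vr$ with $U \in B(\A,H)$, $V\in B(H,X)$, $H$ Hilbert. Since $T$ is bounded below on $Y$, so is $VU$ for small $\vr$, whence $U|_Y$ is an isomorphism onto a closed subspace $H_1 \subset H$; this simultaneously shows that $Y$ is isomorphic to a Hilbert space and produces the projection $P = R_1^{-1}VQU$ onto $Y$, where $Q$ is the orthogonal projection of $H$ onto $H_1$ (Lemma \ref{l:C*alg WK}). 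The weak$^*$ refinement in the last assertion is obtained by redoing Jarchow's construction with weak$^*$-continuous $U$ and $V$ (Lemma \ref{l:adjoint}), not by choosing normal support projections. Unless you can actually prove your support-projection dichotomy --- which would be a substantial result in its own right --- this case must be routed through the factorization.
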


\begin{remark}\label{r:c_0,l_2 SS}
Similar results were obtained in \cite{OS_id} for operators on certain non-commutative
$L_p$ spaces. In contrast with those results, $T(E)$ need not be complemented, even
if $\A$ is separable. Indeed, let $P$ and $J$ be a projection from $\iK(\ell_2)$ onto
a copy of $\ell_2$, and an isometric embedding of $\ell_2$ into $C[0,1]$, respectively.
For $T = JP$, $T(E)$ is not complemented whenever $T|_E$ is an isomorphism.

The commutative counterpart of Theorem \ref{t:c_0,l_2 SS}
(see \cite[Section 5.5]{AK}, or \cite{Pe65-I, Pe65-II}) states that,
for $T \in B(C(K),X)$ ($K$ is a Hausdorff compact), the following are equivalent:
$(1)$ $T$ is not strictly singular;
$(2)$ $T$ fixes a copy of $c_0$;
$(3)$ $T$ is not weakly compact.
Moreover, if $C(K)$ is injective, the above three statements
are equivalent to
$(2^\prime)$ $T$ fixes a copy of $c_0$.
\end{remark}


To prove Theorem \ref{t:c_0,l_2 SS}, we first establish:

\begin{lemma}\label{l:adjoint}
Suppose $\A$ is a von Neumann algebra, $X$ is a dual space, and $T \in B(\A,X)$
is a weak$^*$ continuous weakly compact operator. Then there there exist a weak$^*$ to weak$^*$ continuous
isometric embedding $J : X \to \ell_\infty(\Gamma)$, and, for every $\vr > 0$,
a Hilbert space $H$, and weak$^*$ to weak$^*$ continuous operators $U \in B(\A,H)$
and $V \in B(H,\ell_\infty(\Gamma))$, so that $\|JT - VU\| \leq 2 \vr$.
\end{lemma}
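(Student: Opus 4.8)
The plan is to construct the embedding $J$ by hand, reduce everything to factoring $\Phi := JT$, extract a dominating \emph{normal} state from weak compactness, and then split the coordinate functionals of $\Phi$ by a Hahn--Banach separation. First I would fix a predual $X_*$ of the dual space $X$, put $\Gamma = \ball(X_*)$, and define $J \colon X \to \ell_\infty(\Gamma)$ by $(Jx)(\gamma) = \langle x,\gamma\rangle$. This $J$ is isometric because $\ball(X_*)$ norms $X$, and it is weak$^*$ continuous, being the adjoint of the contraction $\ell_1(\Gamma)\to X_*$ sending $e_\gamma$ to $\gamma$. It therefore suffices to factor $\Phi := JT \in B(\A,\ell_\infty(\Gamma))$, which is weak$^*$ continuous and weakly compact. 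Evaluating coordinatewise, $\Phi$ is described by a bounded family $(\phi_\gamma)_{\gamma\in\Gamma}$ via $(\Phi a)(\gamma) = \phi_\gamma(a)$, and since $\Phi$ is weak$^*$ continuous each $\phi_\gamma = (\mathrm{ev}_\gamma)\circ\Phi$ is a normal functional, i.e.\ lies in $\A_*$.

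Next I would use weak compactness. As $\Phi$ is weakly compact, so is its preadjoint, whence $K := \{\phi_\gamma : \gamma\in\Gamma\}$ is a relatively weakly compact subset of $\A_*$. The key external input is the noncommutative analogue of uniform integrability: a relatively weakly compact subset of the predual is equi-integrable against a single \emph{normal} state. Concretely, there is a normal state $f$ on $\A$ such that, writing $\|a\|_f = \bigl(f(a^*a)+f(aa^*)\bigr)^{1/2}$, for the given $\vr>0$ there is a constant $C$ with
$$\sup_{\gamma}|\phi_\gamma(a)| \;\leq\; C\,\|a\|_f + \vr\,\|a\| \qquad (a\in\A).$$
I would then let $H$ be the Hilbert space obtained by completing $\A$ under the positive semidefinite inner product $\langle a,b\rangle_f = f(b^*a)+f(ab^*)$ (quotienting by its kernel first), and let $U\colon\A\to H$ be the canonical map, so that $\|Ua\| = \|a\|_f$. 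That $U$ is weak$^*$ continuous follows from the separate weak$^*$ continuity of multiplication together with normality of $f$: for each fixed $b$ the map $a\mapsto \langle Ua,[b]\rangle = f(b^*a)+f(ab^*)$ is weak$^*$ continuous, and the classes $[b]$ are dense in $H$.

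Finally I would split each $\phi_\gamma$. For fixed $\gamma$ the set $\mathcal S := U^*\bigl(C\,\ball(H)\bigr) + \vr\,\ball(\A^*)$ is convex, symmetric and weak$^*$ closed; indeed the first summand is weak$^*$ compact, being the image under the adjoint $U^*$ of the weakly compact set $C\,\ball(H)$. The displayed estimate is precisely the statement that $|\phi_\gamma(a)|$ is dominated by the support function $C\|Ua\|+\vr\|a\|$ of $\mathcal S$, so Hahn--Banach separation in $(\A^*,\mathrm{weak}^*)$ places $\phi_\gamma$ inside $\mathcal S$: we get $\phi_\gamma = U^*h_\gamma + r_\gamma$ with $\|h_\gamma\|\leq C$ and $\|r_\gamma\|\leq\vr$. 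Define $V\colon H\to\ell_\infty(\Gamma)$ by $(Vh)(\gamma)=\langle h,h_\gamma\rangle$; it is bounded by $C$, and it is weak$^*$ continuous, being the adjoint of the bounded map $\ell_1(\Gamma)\to H$, $e_\gamma\mapsto h_\gamma$ (well defined since $\sup_\gamma\|h_\gamma\|\leq C$). Then $(VUa)(\gamma)=\langle Ua,h_\gamma\rangle=\phi_\gamma(a)-r_\gamma(a)$, so $\|JT-VU\| = \sup_\gamma\|r_\gamma\|\leq\vr\leq 2\vr$, as required.

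The main obstacle is the extraction of the dominating state: one needs the noncommutative uniform-integrability characterization of relatively weakly compact subsets of $\A_*$ and, crucially, that the dominating state can be chosen \emph{normal}, since this is exactly what makes $U$ weak$^*$ continuous. The passage from an ``$\vr$--$\delta$'' form of equi-integrability to the displayed linear estimate, and the verifications of weak$^*$ continuity of $U$ and $V$, are routine by comparison.
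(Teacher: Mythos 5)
Your proposal is correct, and its first half is exactly the paper's argument: the same $J$ (adjoint of the natural map $\ell_1(\Gamma)\to X_*$), the same appeal to relative weak compactness of $T_*(\ball(X_*))$ in $\A_*$ to produce a dominating \emph{normal} positive functional, the same Jarchow-type linear estimate $\sup_\gamma|\phi_\gamma(a)|\leq C\|a\|_\omega+\vr\|a\|$, the same GNS-type Hilbert space $H$, and the same use of normality of $\omega$ (hence of $\omega y$ and $y\omega$) to get weak$^*$ continuity of $U$. Where you genuinely diverge is in the construction of $V$. The paper forms ${\mathcal B}=H\oplus_1\A$, defines $S:i'a\mapsto Ta$ on the weak$^*$ closed range of $i'(a)=Cia\oplus\vr a$, and extends $S$ to all of ${\mathcal B}$ by lifting the preadjoint $S_*q$ through the quotient $\ell_1(\Gamma)\to{\mathcal B}'_*$, using the lifting property of $\ell_1(\Gamma)$; this is what costs the factor $2$ (the lifting has norm $<2$) and produces the bound $2\vr$. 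You instead split each coordinate functional individually: since $|\phi_\gamma(a)|\leq C\|Ua\|+\vr\|a\|$ is exactly domination by the support function of the absolutely convex weak$^*$ compact set $U^*(C\ball(H))+\vr\ball(\A^*)$, the bipolar/Hahn--Banach argument puts $\phi_\gamma=U^*h_\gamma+r_\gamma$ with $\|h_\gamma\|\leq C$, $\|r_\gamma\|\leq\vr$, and $V$ is then read off coordinatewise. Your route avoids the lifting entirely, makes weak$^*$ continuity of $V$ transparent (it is the adjoint of $e_\gamma\mapsto h_\gamma$), and in fact yields $\|JT-VU\|\leq\vr$ rather than $2\vr$; the paper's route is more functorial (it produces a single extension $\tilde S$ of $S$) but is otherwise no stronger here. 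The only external inputs you rely on --- Akemann/Takesaki's characterization of relative weak compactness in $\A_*$ with a normal dominating functional, and the passage to the linear estimate --- are precisely the ones the paper cites, so nothing is missing.
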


\begin{proof}
We modify the construction from \cite{Jar}.
For an appropriate index set $\Gamma$, there exists a quotient map
$q: \ell_1(\Gamma) \to X_*$. Then $q^* = j$ is a weak$^*$ continuous
isometric embedding of $X$ into $\ell_\infty(\Gamma)$.
Let $T_* \in B(X_*, \A_*)$ be a preadjoint of $T$.
Then $T_*(\ball(X_*))$ is a relatively weakly compact subset of $\A_*$.
By \cite[Chapter III]{Tak}, for every $\vr > 0$ there exist $\delta > 0$
and a positive $\omega \in \A_*$ so that
$\sup_{\phi \in T_*(\ball(X_*))} |\phi(a)| < \vr$ whenever
$\omega(a^* a + a a^*) < \delta$, $a \in \A$. Define the semi-inner product
$\langle a,b \rangle_\omega = \omega (a b^* + b^* a)$ ($a,b \in \A$),
and let $\| \cdot \|_\omega$ be the corresponding seminorm.
Furthermore, set $K = \{a \in \A : \|a\|_\omega = 0\}$,
and let $H$ be the completion of $\A/K$ in $\| \cdot \|_\omega$.

To proceed, observe that the formal identity $i : \A \to H$ is weak$^*$
continuous. Indeed, suppose $(a_\alpha)$ is a weak$^*$ null net in $\A$,
we need to show  that $(i a_\alpha)$ is weakly null in $H$. By a density
argument, we only need to prove that
$\lim_\alpha \omega(a_\alpha y) = \lim_\alpha \omega(y a_\alpha) = 0$
for any $y \in \A$. However, it is well known that the normality of
$\omega$ implies the normality of $\omega y$ and of $y \omega$.


It is shown in \cite{Jar} that there exists $C = C_\vr$ so that
$\|T a\| \leq C \|i a\| + \vr \|a\|$ for any $a \in \A$.
This can be restated as follows: let ${\mathcal{B}} = H \oplus_1 \A$,
and $i^\prime : \A \to {\mathcal{B}} : a \mapsto C ia \oplus \vr a$. Then
$\|i^\prime a\| \geq \|Ta\|$. Clearly, $i^\prime$ is weak$^*$ continuous,
with weak$^*$ closed range ${\mathcal{B}}^\prime$.
Denote the preadjoint of $i^\prime$ by $i^\prime_*$.
Furthermore, let $j$ be the canonical embedding of ${\mathcal{B}}^\prime$
into ${\mathcal{B}}$, and let $j_*$ be its preadjoint (a quotient map).

Define $S : {\mathcal{B}}^\prime \to X : i^\prime a \mapsto T a$. It is easy to
see that $S$ is a weak$^*$-continuous contraction, with preadjoint $S_*$.
The operator $S_* q : \ell_1(\Gamma) \to {\mathcal{B}}^\prime_*$
has a lifting $\tilde{S}_* : \ell_1(\Gamma) \to {\mathcal{B}}_*$,
so that $\|\tilde{S}_*\| < 2$, and $S_* q = j_* \tilde{S}_*$. Then
$\tilde{S} = (\tilde{S}_*)^* : {\mathcal{B}} \to \ell_\infty(\Gamma)$
is a weak$^*$ continuous extension of $S$, of norm less than $c$.
Furthermore, $T = S i^\prime$, hence
$
q^* T = q^* S i^\prime = (S_* q)^* i^\prime = (j_* \tilde{S}_*)^* i^\prime =
\tilde{S} j i^\prime .
$

Denote by $P_H$ and $P_{\mathcal{A}}$ the canonical projections from ${\mathcal{B}}$
to $H$ and $\A$, respectively. Then, for  any $a \in \A$,
$\|q^* T a - \tilde{S} P_H i^\prime a\| \leq \|\tilde{S}\| \|P_{\A} i^\prime a\|
 \leq 2 \vr \|a\|$. Then $U = P_H i^\prime \in B(\A,H)$ and $V = \tilde{S}|_H$
are weak$^*$ continuous, and satisfy $\|q^* T - VU\| \leq 2 \vr$.
%
\end{proof}

\begin{lemma}\label{l:C*alg WK}
Suppose $\A$ is a $C^*$-algebra, and a weakly compact $T \in B(\A,X)$
is an isomorphism on $Y \subset \A$. Then $Y$ is isomorphic to a Hilbert space,
and complemented in $\A$. If, furthermore, $\A$ is a von Neumann algebra, $X$ is
a dual space, and $T$ is weak$^*$ continuous, then $Y$ is complemented by a
weak$^*$ continuous projection.
\end{lemma}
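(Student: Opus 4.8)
The plan is to deduce both assertions from Lemma~\ref{l:adjoint}, which provides an approximate factorization of a weak$^*$ continuous, weakly compact operator on a von Neumann algebra through a Hilbert space. The first step is to reduce the general $C^*$-algebra case to the von Neumann setting by passing to the bidual. Consider $T^{**} : \A^{**} \to X^{**}$: it is weak$^*$ to weak$^*$ continuous, being the adjoint of $T^* : X^* \to \A^*$; it is weakly compact by Gantmacher's theorem; and, under the canonical embeddings, it restricts to $T$ on $\A$, so it is an isomorphism on $Y \subset \A \subset \A^{**}$, with the same lower bound $c>0$. Since $\A^{**}$ is a von Neumann algebra and $X^{**}$ is a dual space, Lemma~\ref{l:adjoint} applies to $T^{**}$.

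Next I would fix $\vr$ with $0 < \vr < c/2$ and invoke Lemma~\ref{l:adjoint}, obtaining an isometric weak$^*$ continuous embedding $J : X^{**} \to \ell_\infty(\Gamma)$, a Hilbert space $H$, and weak$^*$ continuous operators $U \in B(\A^{**},H)$ and $V \in B(H,\ell_\infty(\Gamma))$ with $\|JT^{**} - VU\| \leq 2\vr$. For $y \in Y$, since $J$ is isometric and $\|T^{**}y\| \geq c\|y\|$, this gives $\|VUy\| \geq (c - 2\vr)\|y\|$, hence $\|Uy\| \geq \frac{c-2\vr}{\|V\|}\|y\|$. Thus $U|_Y$ is bounded below, so it is an isomorphism of $Y$ onto its closed image $H_0 := U(Y)$, a subspace of the Hilbert space $H$; in particular $Y \cong H_0$ is isomorphic to a Hilbert space. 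For complementation, let $R = (U|_Y)^{-1} : H_0 \to Y$ and let $Q : H \to H$ be the orthogonal projection onto $H_0$. The composite $\tilde P := R Q U$ maps $\A^{**}$ into $Y$, and for $y \in Y$ one has $QUy = Uy$ and $RUy = y$, so $\tilde P$ is a bounded projection of $\A^{**}$ onto $Y$. Restricting to $\A$ yields a bounded projection $P := \tilde P|_\A : \A \to Y$ (its range lies in $Y \subseteq \A$ and it fixes $Y$), proving that $Y$ is complemented in $\A$.

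For the final assertion, where $\A$ is already a von Neumann algebra, $X$ a dual space, and $T$ weak$^*$ continuous (and weakly compact), I would skip the bidual and apply Lemma~\ref{l:adjoint} directly to $T$, obtaining weak$^*$ continuous $U,V$ and forming $P = RQU : \A \to Y$ exactly as above. The only remaining — and most delicate — point is that this $P$ is weak$^*$ continuous. Here I would use that $H$, $H_0$, and $Y$ are reflexive (being Hilbert spaces), so on each of them the weak$^*$ and weak topologies coincide. Then $U$ is weak$^*$-to-weak continuous into $H$ by the lemma, $Q : (H,w) \to (H_0,w)$ is weakly continuous, and $R : (H_0,w) \to (\A,w^*)$ is continuous, since $R$ is weak-to-weak continuous into $Y$ and the inclusion $Y \hookrightarrow (\A,w^*)$ is weak-to-weak$^*$ continuous (the norm topology is finer). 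Composing, $P : (\A,w^*) \to (\A,w^*)$ is continuous, and a weak$^*$-to-weak$^*$ continuous linear map on a dual space is the adjoint of a bounded operator between the preduals, hence weak$^*$ continuous.

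I expect the main obstacle to be this last weak$^*$-continuity verification: the inverse $R=(U|_Y)^{-1}$ is only manifestly norm-bounded, and upgrading the composition $RQU$ to weak$^*$ continuity requires carefully exploiting the reflexivity of the Hilbert spaces involved together with the weak$^*$ continuity of $U$ supplied by Lemma~\ref{l:adjoint}. The reduction to the bidual and the lower-bound estimate are routine by comparison.
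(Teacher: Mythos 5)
Your proof is correct and follows essentially the same route as the paper's: an approximate Hilbert-space factorization of the weakly compact operator (the paper cites Jarchow directly for the general $C^*$-algebra case, where you instead pass to $T^{**}$ and apply Lemma~\ref{l:adjoint} in the bidual --- a harmless variant), followed by pulling the orthogonal projection of $H$ onto $U(Y)$ back to $\A$ to produce the complementing projection. Your factor-by-factor verification that $P = RQU$ is weak$^*$-to-weak, hence weak$^*$-to-weak$^*$, continuous is the same argument the paper gives.
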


\begin{proof}
Suppose $\A$ is a $C^*$-algebra.
By enlarging the space $X$ if necessary, we can assume
$X = \ell_\infty(\Gamma)$, for some set $\Gamma$. By scaling,
it suffices to consider the case of $\|T\| = 1$. If $T$ fixes $Y \subset \A$,
denote the restriction of $T$ to $Y$ by $T_0$, and let $Y_0 = T(Y)$.
Find $c \in (0,1/3)$ so that
$\|T_0^{-1}\|_{B(Y_0,Y)} < 1/(3c)$. By \cite{Jar}, there exist a Hilbert space $H$,
and operators $U \in B(\A,H)$ and $V \in B(H,X)$, so that $\|T - VU\| < c$.
Note that, for any $y \in Y$, $\|VUy\| \geq 2c\|y\|$, hence $VU|_Y$ is an
isomorphism. Then $H_1 = U(Y)$ and $Y_1 = VU(Y) = V(H_1)$ are closed subspaces
of $H$ and $X$, respectively, while $U_1 = U|_Y \in B(Y,H_1)$ and
$V_1 = V|_{H_1} \in B(H_1,Y_1)$ are isomorphisms.

Now consider the map $S : Y_0 \to Y_1 : T y \mapsto VU y$ ($y \in Y$).
We claim that $S$ is an isomorphism. Indeed, pick a norm one $z \in Y_0$.
Then $y = T_0^{-1} z \in Y$ satisfies $1 \leq \|y\| \leq (3c)^{-1}$, hence
$\|(VU - T)y\| \leq \|VU-T\| \|y\| < 1/3$, and therefore,
$$
\Big| \|Sz\| - 1 \Big| \leq \|VU - T\| \|y\| < \frac{1}{3} .
$$
Thus, $Y_1$ is isomorphic to $Y_0$, hence also to $Y$. As
$V_1 U_1 \in B(Y, Y_1)$ is an isomorphism,
we conclude that $Y$ is a Hilbert space.

To show that $Y$ is complemented, set $R = VU$, and
let $Q$ be the orthogonal projection from $H$ onto $H_1$.
Note that 
$R_1 = R|_Y \in B(Y,Y_1)$ is an isomorphism.
Then $P = R_1^{-1} V Q U : \A \to Y$ is a projection.

Now suppose $T : \A \to X$ is a weak$^*$ to weak$^*$ continuous,
weakly compact contraction, which is not strictly singular.
Then $T$ fixes a reflexive subspace $Y \subset \A$. By \cite[Lemma 6.44]{FHetc},
$Y$ is weak$^*$ closed in $\A$. Pick $c > 0$ so that $\|Ta\| \geq 3c \|a\|$
for any $a \in Y$. By the above, $Y$ is isomorphic to a Hilbert space.
By Lemma \ref{l:adjoint}, we can find a weak$^*$ to weak$^*$  continuous
isometry $J : X \to \ell_\infty(\Gamma)$, a Hilbert space $H$, and weak$^*$
to weak$^*$ continuous operators $U : \A \to H$ and $V : H \to \ell_\infty(\Gamma)$,
so that $\|VU - JT\| < c$. Repeating the construction employed in the
$C^*$-algebra case, we define a projection $P = R_1^{-1} V Q U$
from $\A$ to $Y$. We know that $U$ is weak$^*$ to weak continuous,
while $R_1^{-1}$, $V$, and $Q$ are norm continuous, hence also weak
continuous. Therefore, $P$ is weak$^*$ to weak continuous, hence
weak$^*$ continuous.
\end{proof}

\begin{proof}[Proof of Theorem \ref{t:c_0,l_2 SS}]
Suppose first $T$ is not weakly compact. By \cite{Pf}, $T$ fixes a copy of $c_0$,
spanned by self-adjoint normalized elements $a_i \in \A$, with disjoint support.
From this, one can conclude (see e.g. \cite[Theorem 5.5.5]{AK}) that, if $\A$ is
a von Neumann algebra, 
then  $T$ is an isomorphism on a copy of $\ell_\infty$ generated by a disjoint sequence of norm one self-adjoint elements  $a_i$'s (we use ${\mathcal{B}}$ for this copy of $\ell_\infty$).
Denote the (mutually orthogonal) support projections
of the elements $a_i$ by $p_i$. Then $u : a \mapsto \sum_i p_i a p_i$ is a
conditional expectation on $\A$. It is easy to see that, for any $\vr > 0$, there exists
a weak$^*$ continuous projection $u_i$ from $p_i \A p_i$ onto $\span[a_i]$.
Consequently, $v = (\sum_i u_i) u$ is a weak$^*$ continuous projection from
$\A$ onto ${\mathcal{B}}$, of norm not exceeding $1+\vr$.

For $T \in \iWK(\A,X) \backslash \iSS(\A,X)$, invoke Lemma \ref{l:C*alg WK}.
\end{proof}

In the dual setting, we obtain:

\begin{corollary}\label{c:l_2 l_1 SCS}
Suppose $X$ is a Hilbert space, and $\A$ is a von Neumann algebra.
For $T \in B(X, \A_*)$, the following are equivalent:
\begin{enumerate}
\item $T$ is not strictly cosingular.
\item There is a quotient map $q$ from $\A_*$ onto either $\ell_1$ or $\ell_2$,
so that $qT$ is an isomorphism.
\end{enumerate}
\end{corollary}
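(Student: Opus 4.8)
The plan is to pass to the adjoint and invoke Theorem~\ref{t:c_0,l_2 SS}. Write $T^* \in B(\A,X)$ for the adjoint of $T$, using the identifications $(\A_*)^* = \A$ and $X^* = X$; being an adjoint, $T^*$ is weak$^*$ continuous, and since $X$ is reflexive both $T$ and $T^*$ are automatically weakly compact. The whole statement follows once I show that $(1)$ is equivalent to the assertion that $T^*$ is not strictly singular and then read off the geometry from Theorem~\ref{t:c_0,l_2 SS}. Since $X$ is a Hilbert space it contains no copy of $c_0$, so that theorem, applied to $T^* : \A \to X$, says precisely that $T^*$ is not strictly singular if and only if it fixes a complemented copy of $\ell_2$.

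For $(1) \Rightarrow (2)$: the inclusion $\iSS^*(X,\A_*) \subset \iSCS(X,\A_*)$ recorded in the introduction (from \cite{Ai}) gives, contrapositively, that if $T$ is not strictly cosingular then $T^*$ is not strictly singular. By the previous paragraph $T^*$ fixes a complemented copy $W \cong \ell_2$ inside $\A$; moreover, since $T^*$ is weak$^*$ continuous and $X$ is a dual space, the final clause of Theorem~\ref{t:c_0,l_2 SS} lets me take the projection onto $W$ to be weak$^*$ continuous, so that $W$ is weak$^*$ closed. Then $W$ is reflexive with predual $W_* = \A_*/W_\perp \cong \ell_2$, the inclusion $i : W \hookrightarrow \A$ is weak$^*$ continuous, and $q := i_* : \A_* \to W_*$ is a quotient map onto a copy of $\ell_2$. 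Since $(qT)^* = T^* i = T^*|_W$ is bounded below (it is an isomorphism onto its image), the closed range theorem shows that $qT$ is surjective, which is the $\ell_2$ alternative of $(2)$. The $\ell_1$ alternative cannot actually be realized here: a surjection from the Hilbert space $X$ onto $\ell_1$ would exhibit $\ell_1$ as a quotient of a Hilbert space, which is impossible, so it is present only for uniformity with the non-reflexive setting.

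For $(2) \Rightarrow (1)$: if $q$ is a quotient map from $\A_*$ onto an infinite-dimensional space $E$ (namely $\ell_1$ or $\ell_2$) with $qT$ surjective, then, setting $Z = \ker q$, the map $Q_Z T$ is, up to the canonical isomorphism $\A_*/Z \cong E$, exactly $qT$, hence surjective onto an infinite-dimensional quotient; thus $T$ is not strictly cosingular. This direction is routine.

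The main obstacle is the $(1) \Rightarrow (2)$ step, and inside it the transition from ``$T^*$ fixes a weak$^*$-complemented copy of $\ell_2$'' to ``$qT$ is a surjection onto $\ell_2$''. Two points need care: first, one must genuinely use the weak$^*$-continuity refinement of Theorem~\ref{t:c_0,l_2 SS}, so that $W$ is weak$^*$ closed and the preadjoint quotient map $q = i_*$ exists; second, one must run the closed-range/duality argument correctly, verifying that boundedness below of $(qT)^* = T^*|_W$ forces surjectivity of $qT$. Reconciling the $\ell_1$ clause with the reflexivity of $X$ is a minor but worth-stating point.
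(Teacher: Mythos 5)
Your argument is correct, but it is organized differently from the paper's. The paper works directly from the definition of non-strict-cosingularity: it takes the quotient map $q : \A_* \to Z$ with $qT$ surjective that witnesses $(1)$, notes that $T$ (hence $T^*$) is weakly compact and that $T^*$ fixes $q^*(Z^*) \subset \A$, and applies Lemma~\ref{l:C*alg WK} to conclude that $Z$ is a Hilbert space --- so the quotient already in hand is the one required by $(2)$. You instead pass through the inclusion $\iSS^*(X,\A_*) \subset \iSCS(X,\A_*)$ to get that $T^*$ is not strictly singular, invoke the weak$^*$ refinement of Theorem~\ref{t:c_0,l_2 SS} to produce a weak$^*$-closed, weak$^*$-complemented copy $W \cong \ell_2$ fixed by $T^*$, and then manufacture a new quotient map $q = i_* : \A_* \to \A_*/W_\perp$ whose adjoint is $T^*|_W$, closing the loop with the surjectivity criterion. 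Both routes ultimately rest on Lemma~\ref{l:C*alg WK} (the weakly compact case of the theorem is proved from it), but the paper's is shorter because it never rebuilds the quotient; your version genuinely needs the weak$^*$-continuity clause of the theorem, which you correctly flag, so that $W$ is weak$^*$ closed and the preadjoint $q$ exists. Two of your side remarks actually sharpen the paper: the observation that the $\ell_1$ alternative is vacuous because $X$ is reflexive (the paper's first case, ``$T$ not weakly compact,'' cannot occur for Hilbert $X$), and the explicit closed-range step from ``$(qT)^*$ bounded below'' to ``$qT$ surjective,'' which the paper leaves implicit.
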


\begin{proof}
Only $(1) \Rightarrow (2)$ needs to be proved. Pick $T \in B(X, \A_*)$ which is
not strictly cosingular. If $T$ is not weakly compact, then \cite{Pf} $T^*$
fixes a copy of $c_0$ (or $\ell_\infty$), hence, by \cite[Proposition 1]{Pe65-II},
$T$ fixes a complemented copy of $\ell_1$. Now suppose $T$ is weakly compact, and
$q : \A_* \to Z$ is a quotient map so that $qT$ is an isomorphism ($Z$ is infinite
dimensional). Then $T^*$ fixes $q^*(Z^*)$. By Lemma \ref{l:C*alg WK}, $Z$ is
a Hilbert space.
\end{proof}

\section{Characterizations of the Dunford-Pettis Property}\label{s:DPP}

Now we use the results from the previous sections to characterize $C^*$-algebras
and their preduals possessing the  Dunford-Pettis Property (DPP). In the predual
setting, we obtain the following:


\begin{proposition}\label{p:predual DPP}
For a von Neumann algebra $\A$, the following are equivalent:
\begin{enumerate}
\item 
$\A_*$ has the Dunford-Pettis Property.
\item
$\A$ is type $I$ finite.
\item
$\A_*$ does not contain complemented copies of $\ell_2$.
\end{enumerate}
\end{proposition}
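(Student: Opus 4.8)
The plan is to build on the fact, already recorded in Subsection~\ref{ss:DPP}, that (1) and (2) are equivalent by \cite{Bu, CI}; it then remains only to fold (3) into the circle of equivalences. Concretely, I would prove the implications (1)~$\Rightarrow$~(3) and (3)~$\Rightarrow$~(2), which together with the cited (2)~$\Rightarrow$~(1) close the cycle (2)~$\Rightarrow$~(1)~$\Rightarrow$~(3)~$\Rightarrow$~(2) and hence establish all three equivalences.

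For (1)~$\Rightarrow$~(3) I would argue by contraposition, using the observation in Subsection~\ref{ss:DPP}: if $X$ has the DPP, then every weakly compact operator out of $X$ is strictly singular. Suppose $\A_*$ contained a complemented copy $Z \cong \ell_2$, with associated projection $P \in B(\A_*)$. Since $Z$ is reflexive, $P$ factors through a reflexive space and is therefore weakly compact; by the DPP it would then be strictly singular. This is absurd, because $P$ restricts to the identity on the infinite-dimensional subspace $Z$. Hence the DPP forces (3).

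The substance is (3)~$\Rightarrow$~(2), which I would prove in contrapositive form: if $\A$ is not type $I$ finite, then $\A_*$ contains a complemented copy of $\ell_2$. The main tool is a pull-back principle: if $\mathcal{N} \subset \A$ is a von Neumann subalgebra admitting a normal conditional expectation $\ce : \A \to \mathcal{N}$, then, taking preadjoints of $\ce$ and of the inclusion $\iota$ and using $\ce \iota = \mathrm{id}_{\mathcal{N}}$, the operator $\ce_* \iota_*$ is a norm-one projection of $\A_*$ onto a subspace isometric to $\mathcal{N}_*$. Thus it suffices to locate inside $\A$ such an $\mathcal{N}$ whose predual already contains a complemented $\ell_2$. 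Structure theory splits the hypothesis into two cases (see e.g.\ \cite{KR2}). If $\A$ has a properly infinite summand (type $I_\infty$, $II_\infty$, or $III$), a system of matrix units $\{E_{ij}\}_{i,j \in \N}$ with $\sum_i E_{ii} = 1$ produces $\mathcal{N} = \{E_{ij}\}'' \cong B(\ell_2)$, with a normal conditional expectation obtained by slicing the corner $E_{11}\A E_{11}$ with a normal state; here $\mathcal{N}_* = \cs_1(\ell_2)$, whose first row $\overline{\span}\{E_{1j} : j \in \N\}$ is isometric to $\ell_2$ and complemented by the contraction $a \mapsto E_{11} a$. If instead $\A$ is finite but not type $I$, it has a type $II_1$ summand, which contains the hyperfinite factor $R$ together with its normal trace-preserving conditional expectation; taking $\mathcal{N} = R$, I would use a Clifford (CAR) system $\{s_n\}$ of anticommuting self-adjoint unitaries. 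Since $(\sum_n a_n s_n)^2 = \|(a_n)\|_2^2 \cdot 1$ for real scalars, the closed span of $\{s_n\}$ in $L_1(R) = R_*$ is isomorphic to $\ell_2$, and the trace-orthogonal map $Q : x \mapsto \sum_n \tau(s_n x) s_n$ is contractive on $R$, hence, being $\tau$-selfadjoint in the sense $\tau(Q(x)\,y) = \tau(x\,Q(y))$, also on $L_1(R)$; this exhibits the required complemented $\ell_2$.

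I expect the type $II_1$ case to be the main obstacle. In the properly infinite case one simply extracts a $B(\ell_2)$ corner, and the row of $\cs_1(\ell_2)$ is visibly a complemented $\ell_2$. By contrast, a type $II_1$ algebra is finite, so no copy of $B(\ell_2)$ is available, and one cannot retreat to a commutative subalgebra either: a diffuse abelian subalgebra has predual $L_1[0,1]$, which has the DPP and hence (by the implication already proved) no complemented $\ell_2$. The noncommutativity must therefore be used in an essential way, through the Clifford system, and the one genuinely technical point is the $L_1(R)$-boundedness of the projection $Q$, which I would deduce from its contractivity on $R$ via the trace duality above. The remaining ingredients---that a properly infinite algebra carries the requisite matrix units and that a type $II_1$ algebra contains $R$ with a normal conditional expectation---are standard structural facts.
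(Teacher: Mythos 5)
Your proof is correct, and its skeleton coincides with the paper's: both take (1) $\Leftrightarrow$ (2) from \cite{Bu, CI}, dispose of (1) $\Rightarrow$ (3) by observing that a projection onto a reflexive subspace is weakly compact and hence, under the DPP, strictly singular, and reduce everything to showing that $\lnot(2)$ forces a complemented copy of $\ell_2$ into $\A_*$. Where you genuinely diverge is in the ingredients for that last step. The paper splits into ``type $I$ but not finite'' (a summand ${\mathcal{B}} \otimes B(H)$ immediately gives a complemented copy of $H$ in the predual) versus ``not type $I$'', and for the latter it simply cites \cite{MaN} for the fact that ${\mathcal{R}}_*$ is complemented in $\A_*$ (${\mathcal{R}}$ the hyperfinite $II_1$ factor) and \cite{LeMRR} for a complemented $\ell_2$ inside ${\mathcal{R}}_*$. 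You instead split into ``properly infinite'' versus ``finite non-type-$I$'' and make both halves self-contained: the preadjoint of a normal conditional expectation replaces the appeal to \cite{MaN}, the first row of $\cs_1(\ell_2)$ handles the properly infinite case, and the Clifford-system computation with the trace-duality bound for $Q$ on $L_1(R)$ replaces \cite{LeMRR}. This buys transparency and elementarity at the cost of length. Two small points to tighten rather than wave away: the type $II_1$ summand need not carry a faithful normal tracial state unless you first cut down to a $\sigma$-finite central summand (whose predual is still $1$-complemented), and for complex coefficients $Q$ is only bounded on $R$ (Bessel plus the two-sided $\ell_2$-equivalence for the complex span of the $s_n$), not literally contractive; neither issue affects the argument.
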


\begin{proof}
$(1) \Leftrightarrow (2)$ is established in \cite{CI} and \cite{Bu}.
$(1) \Rightarrow (3)$ is evident. Now suppose $(2)$ fails.
If $\A$ is of type $I$, and not finite, then it contains a direct
summand of the form ${\mathcal{B}} \otimes B(H)$, where ${\mathcal{B}}$
is a commutative von Neumann algebra, and $H$ is infinite dimensional.
Consequently, $\A_*$ contains a complemented copy of $H$. If $\A$ is not
of type $I$, then, by \cite{MaN}, ${\mathcal{R}}_*$ is a complemented subspace
of $\A_*$, where ${\mathcal{R}}$ is a hyperfinite $II_1$ factor. However, ${\mathcal{R}}_*$
contains a complemented copy of $\ell_2$ (see e.g. \cite[Section 5]{LeMRR}; their
finite dimensional proof can be easily adjusted to the infinite dimensional case).
\end{proof}

To work with $C^*$-algebras, observe that the canonical basis in either $c_0$ or $\ell_p$
($1 < p < \infty$) forms a weakly null sequence which is not norm null. This leads to:

\begin{lemma}\label{l:not DP}
If $T \in B(X,Y)$ fixes a copy of either $\ell_p$ ($1 < p < \infty$) or $c_0$,
then $T$ is not Dunford-Pettis.
\end{lemma}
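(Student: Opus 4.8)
The plan is to exploit the observation recorded just before the statement: the canonical basis $(e_n)$ of $E$, where $E$ is either $c_0$ or $\ell_p$ with $1 < p < \infty$, is a \emph{normalized weakly null} sequence. Since $T$ fixes a copy of $E$, by definition there is a subspace $Z \subset X$ and an isomorphism $\phi : E \to Z$ such that $T|_Z$ is an isomorphism onto its range. Composing, $T \phi : E \to Y$ is then an isomorphism onto $T(Z)$, hence bounded below: there is $c > 0$ with $\norm{T\phi x} \geq c \norm{x}$ for every $x \in E$.

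First I would set $z_n = \phi(e_n) \in X$. Viewing $\phi$ as a bounded linear operator from $E$ into $X$ (the isomorphism onto $Z$ followed by the inclusion $Z \hookrightarrow X$), and recalling that bounded linear maps are weak-to-weak continuous, the weak nullity of $(e_n)$ in $E$ gives that $(z_n)$ is weakly null in $X$. Next, the image sequence $(Tz_n) = (T\phi e_n)$ is bounded away from $0$, since $\norm{Tz_n} = \norm{T\phi e_n} \geq c\norm{e_n} = c > 0$ for every $n$, the basis being normalized and $T\phi$ bounded below.

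To finish, I would invoke the sequential characterization of Dunford--Pettis operators: a DP operator maps weakly null sequences to norm null sequences. As $T$ carries the weakly null sequence $(z_n)$ to $(Tz_n)$, which is bounded below by $c > 0$, the operator $T$ cannot be Dunford--Pettis, proving the lemma.

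There is essentially no obstacle here; the only point deserving a moment's care is that weak nullity of $(e_n)$ in $E$ genuinely transfers to weak nullity of $(z_n)$ in the ambient space $X$. This is immediate once one recalls both that $\phi$ is weak-to-weak continuous as a bounded operator and that the weak topology of the closed subspace $Z$ coincides with the topology induced on it from $X$ (every functional on $Z$ extends to $X$ by Hahn--Banach, and restrictions of functionals on $X$ are functionals on $Z$).
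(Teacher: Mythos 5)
Your proof is correct and is exactly the argument the paper intends: the sentence immediately preceding the lemma (the canonical basis of $c_0$ or $\ell_p$, $1<p<\infty$, is weakly null but not norm null) is the paper's entire justification, and you have simply written out the routine transfer of weak nullity through the isomorphism and the boundedness below of $T$ on the fixed copy. No further comment is needed.
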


This simple lemma is used to deduce two corollaries of Theorem~\ref{t:c_0,l_2 SS}:

\begin{corollary}\label{c:DP_WK}
Suppose $\A$ is a $C^*$-algebra and $X$ is a Banach space. Then
$\iDP(\A,X) \subseteq \iSS(\A,X) \subseteq \iWK(\A,X)$.
\end{corollary}

\begin{proof}
By \cite{Pf}, any non-weakly compact operator fixes a copy of $c_0$, hence
$\iSS(\A,X) \subseteq \iWK(\A,X)$. The inclusion $\iDP(\A,X) \subseteq \iSS(\A,X)$
is obtained by combining Theorem~\ref{t:c_0,l_2 SS} and Lemma \ref{l:not DP}.
\end{proof}



\begin{corollary}\label{c:comp l2}
For a $C^*$-algebra $\A$, the following statements are equivalent:
\begin{enumerate}
\item 
$\A$ contains no complemented copies of $\ell_2$.
\item
$\A$ has no infinite dimensional irreducible representations.
\item
$\A$ has the Dunford-Pettis Property.
\item
For any Banach space $X$, $\iWK(\A,X) = \iSS(\A,X)$.
\item
For any Hilbert space $X$, $B(\A,X) = \iSS(\A,X)$.
\item
For any Banach space $X$, $\iSS(\A,X)=\iDP(\A,X)$.
\end{enumerate}
\end{corollary}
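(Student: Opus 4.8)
The plan is to close the cycle $(3)\Rightarrow(4),(5),(6)\Rightarrow(1)\Rightarrow(3)$, taking $(2)\Leftrightarrow(3)$ for granted since it is exactly the characterization of the DPP recalled in Subsection~\ref{ss:DPP} from \cite{CIW}. \emph{Descent from the DPP.} Assuming $(3)$, the well-known consequences of the DPP recorded in Subsection~\ref{ss:DPP} show that every weakly compact operator out of $\A$ is simultaneously strictly singular and Dunford--Pettis. Feeding this into the chain $\iDP(\A,X)\subseteq\iSS(\A,X)\subseteq\iWK(\A,X)$ of Corollary~\ref{c:DP_WK} collapses all three ideals: $\iDP(\A,X)=\iSS(\A,X)=\iWK(\A,X)$ for every Banach space $X$. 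This is $(4)$ and $(6)$ verbatim, and it gives $(5)$ as well, because any operator into a Hilbert space has relatively weakly compact range, so $B(\A,X)=\iWK(\A,X)=\iSS(\A,X)$ there.

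\emph{From $(4),(5),(6)$ back to $(1)$.} Each of these I argue by contraposition: if $(1)$ fails, $\A$ carries a complemented $Z\cong\ell_2$ with projection $P:\A\to Z$. Viewing $P$ as an operator into $\ell_2$, its range is reflexive, so it is weakly compact, yet it is an isomorphism on $Z$ and hence not strictly singular; this breaks both $(4)$ and $(5)$. For $(6)$, recall that the formal inclusion $\ell_2\hookrightarrow c_0$ is strictly singular but not Dunford--Pettis (the unit vector basis is weakly null of $c_0$-norm one). Composing it with an isomorphism $Z\cong\ell_2$ and with $P$ produces an operator $\A\to c_0$ that remains strictly singular by the ideal property, while testing it on the preimage in $Z$ of the $\ell_2$-basis shows it is not Dunford--Pettis; thus $\iSS(\A,c_0)\neq\iDP(\A,c_0)$, breaking $(6)$.

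\emph{The hard implication $(1)\Rightarrow(3)$.} Here I argue contrapositively through $(2)$: if $(3)$ fails then, by \cite{CIW}, $\A$ has an infinite-dimensional irreducible representation $\pi:\A\to B(H)$, and I must manufacture a complemented copy of $\ell_2$ \emph{inside $\A$}, contradicting $(1)$. The naive candidate $a\mapsto\pi(a)\xi$ is onto $H$ by Kadison transitivity, but surjectivity alone is worthless here, since a quotient map $\ell_1\to\ell_2$ is surjective and strictly singular. Instead I build a \emph{column}: fixing an orthonormal sequence $(\xi_n)\subset H$, Kadison transitivity supplies elements of $\A$ whose images approximate the rank-one maps $\xi_n\otimes\bar\xi_1$, and a routine $C^*$-perturbation makes the partial-isometry relations exact on a subalgebra, yielding an isometric $\ell_2$ that is the range of a norm-one slice (conditional-expectation) projection; equivalently, once such a weakly compact non-strictly-singular operator out of $\A$ is exhibited, Theorem~\ref{t:c_0,l_2 SS} and Lemma~\ref{l:C*alg WK} upgrade the fixed subspace to a complemented $\ell_2$. \emph{This construction is the main obstacle}: the column of the $B(H)$-summand of $\A^{**}$ (equivalently the complemented $\ell_2$ it yields in $\A^{*}$) need not descend to $\A$, so one cannot avoid producing the copy by hand from transitivity and perturbation.
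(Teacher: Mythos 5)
Your overall architecture is sound and largely parallels the paper's. The descent $(3)\Rightarrow(4),(5),(6)$ via the DPP together with the chain $\iDP(\A,X)\subseteq\iSS(\A,X)\subseteq\iWK(\A,X)$ of Corollary~\ref{c:DP_WK} is correct (the paper gets $(1)\Rightarrow(4)$ slightly differently, via Theorem~\ref{t:c_0,l_2 SS}, but uses exactly your DPP argument for $(6)$), and your contrapositive arguments $(4),(5),(6)\Rightarrow(1)$, including the strictly singular but non-Dunford--Pettis operator $\A\to c_0$ built from the formal inclusion $\ell_2\hookrightarrow c_0$, coincide with the paper's.

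The genuine gap is in what you correctly identify as the hard implication. The paper does not prove that an infinite-dimensional irreducible representation forces a complemented copy of $\ell_2$ inside $\A$; it cites Robertson \cite{Rob} for precisely the implication $(1)\Rightarrow(2)$ (and closes the cycle with $(2)\Leftrightarrow(3)$ from \cite{CIW} and the easy $(3)\Rightarrow(1)$). Your sketch reconstructs the idea behind Robertson's column-space argument, but the two steps you dismiss as routine are exactly where the work lies: (i) Kadison transitivity gives you, one at a time, elements $v_n\in\A$ with $\pi(v_n)$ close to $\xi_n\otimes\bar\xi_1$, but to get the upper $\ell_2$-estimate on $\|\sum c_nv_n\|$ you need $\|\sum_{n,m}\bar c_nc_m\,v_n^*v_m\|$ controlled, i.e.\ genuine (approximate) orthogonality relations $v_n^*v_m\approx\delta_{nm}p$ holding simultaneously for the whole sequence, and upgrading an infinite family of approximate partial isometries to exact ones "on a subalgebra" is not a routine perturbation; (ii) even with exact relations, the candidate projection $x\mapsto\sum_n v_n\,\tau(v_n^*x)$ onto $\span[v_n]$ requires a further conditional expectation $p\A p\to\C p$, which is problematic when $p$ is not a minimal projection of $\A$ (minimal projections generally live only in $\A^{**}$ -- the very obstruction you flag). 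Your proposed fallback, exhibiting a weakly compact non--strictly-singular operator and invoking Lemma~\ref{l:C*alg WK}, is a valid reduction but does not help: showing $a\mapsto\pi(a)\xi$ (or any such map) is an isomorphism on some subspace is exactly the column construction again. As written, the implication $(1)\Rightarrow(2)$ remains unproved; either carry out Robertson's construction in full or cite \cite{Rob} as the paper does.
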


\begin{proof}
The implications $(1) \Rightarrow (2)$ and $(3) \Leftrightarrow (2)$ were established in
\cite{Rob} and \cite{CIW}, respectively. $(3) \Rightarrow (1)$ and
$(4) \Rightarrow (5) \Rightarrow (1)$ are clear. To prove $(1) \Rightarrow (4)$,
note first that $\iSS(\A,X) \subseteq \iWK(\A,X)$ by Proposition~\ref{p:SS subs WK}.
By Theorem~\ref{t:c_0,l_2 SS}, any $T \in \iWK(\A,X) \backslash \iSS(\A,X)$
must fix a complemented copy of $\ell_2$.

Now suppose the equivalent properties $(1)-(5)$ hold.
Then, by the definition of Dunford-Pettis Property,
$\iWK(\A,X) = \iSS(\A,X) \subseteq \iDP(\A,X)$. 
The inclusion $\iSS(\A,X) \supseteq \iDP(\A,X)$ follows from
Corollary \ref{c:DP_WK}.



Finally, to show $(6) \Rightarrow (1)$, note that, if (1) fails, then there exists a subspace
$E$ of $\A$, isomorphic to $\ell_2$, and complemented by a projection $Q$. Let $j : E \to \ell_2$
be an isomorphism, and let $i : \ell_2 \to \ell_1$ be the formal identity. Then
$T = ijQ \in B(\A, c_0)$ is strictly singular, but not Dunford-Pettis.
\end{proof}

We conclude this section with a related result.

\begin{proposition}\label{p:WK=IN}
For a Banach space $X$, and a $C^*$-algebra $\A$, the following are equivalent:
\begin{enumerate}
\item 
$\iWK(X,\A) \backslash \iIN(X,\A) \neq \emptyset$.
\item
$X$ and $\A$ contain complemented copies of $\ell_2$.
\end{enumerate}
\end{proposition}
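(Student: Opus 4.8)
The plan is to handle the two implications separately, in both cases exploiting an operator that factors through $\ell_2$ and whose defect of inessentiality is witnessed by an infinite-dimensional kernel of $I-UT$.

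For $(2)\Rightarrow(1)$, let $E\subset X$ and $F\subset\A$ be complemented copies of $\ell_2$, with projections $P:X\to E$, $Q:\A\to F$, inclusions $\iota_E,\iota_F$, and a fixed isomorphism $\phi:E\to F$. I would set $T=\iota_F\phi P$ and $U=\iota_E\phi^{-1}Q$. As $T$ factors through the reflexive space $E\cong\ell_2$, it lies in $\iWK(X,\A)$; and since $Q\iota_F=I_F$, one computes $UT=\iota_E P$, the projection of $X$ onto $E$. Hence $\ker(I_X-UT)=E$ is infinite-dimensional, $I_X-UT$ is not Fredholm, and therefore $T\notin\iIN(X,\A)$.

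For $(1)\Rightarrow(2)$, take $T\in\iWK(X,\A)\setminus\iIN(X,\A)$. The key input is the criterion from \cite{P} already invoked in the proof of Theorem~\ref{thm:IN=WK}: inessentiality of an operator is equivalent to $I-UT$ having finite-dimensional kernel for every $U$. Its contrapositive supplies a $U\in B(\A,X)$ with $\dim\ker(I_X-UT)=\infty$. Since $U$ and $T$ induce mutually inverse injections between $\ker(I_X-UT)$ and $\ker(I_\A-TU)$, the subspace $M=\ker(I_\A-TU)\subset\A$ is infinite-dimensional as well; on $M$ one has $TU|_M=\iota_M$, so $TU$ is an isomorphism on $M$, and $TU\in\iWK(\A,\A)$ because $T$ is weakly compact. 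Now Lemma~\ref{l:C*alg WK}, applied to $TU$ as a weakly compact operator out of the $C^*$-algebra $\A$, shows that $M$ is isomorphic to $\ell_2$ and complemented in $\A$, say by a projection $Q:\A\to M$. To obtain the copy inside $X$, I would put $E=U(M)$; on $M$ we have $T(Um)=TUm=m$, so $U|_M:M\to E$ is bounded below, hence an isomorphism, and $E\cong\ell_2$. Finally $P:=UQT$ is a bounded projection of $X$ onto $E$: its range lies in $E$, and for $x=Um\in E$ one gets $QTx=QTUm=Qm=m$ and $U(QTx)=Um=x$, so $P|_E=I_E$. Thus both $\A$ and $X$ contain complemented copies of $\ell_2$.

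The delicate point is the passage in $(1)\Rightarrow(2)$ from the hypothesis on $T$ to a situation where Lemma~\ref{l:C*alg WK} can be used. That lemma governs weakly compact operators \emph{out of} a $C^*$-algebra, whereas $T$ maps \emph{into} $\A$; the manoeuvre that rescues the argument is to replace $UT$ on $X$ by $TU$ on $\A$, which is legitimate precisely because non-inessentiality is detected by an infinite-dimensional kernel and kernel dimensions are preserved under the $UT\leftrightarrow TU$ interchange. Once $TU$ is realized as a weakly compact operator on $\A$ that fixes the infinite-dimensional subspace $M$, Lemma~\ref{l:C*alg WK} does the substantive work, and transferring the complementation back to $X$ through $U$, $Q$ and $T$ is routine.
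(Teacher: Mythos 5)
Your proof is correct and follows essentially the same route as the paper: the paper cites \cite[Theorem 7.17]{Ai} to obtain directly an infinite-dimensional subspace $E\subset\A$ with $TS|_E=I_E$, applies Lemma~\ref{l:C*alg WK} to the weakly compact operator $TS$, and transfers the complementation to $X$ via the projection $SPT$ — exactly your $UQT$. The only cosmetic difference is that you start from $\ker(I_X-UT)$ and carry out the $UT\leftrightarrow TU$ interchange by hand, which the paper's citation absorbs.
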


\begin{proof}
$(2) \Rightarrow (1)$ is clear. To prove the converse, consider
$T \in \iWK(X,\A) \backslash \iIN(X,\A)$.
By \cite[Theorem 7.17]{Ai}, there exists $S \in B(\A,X)$ so that
$\dim \ker(I_{\A} - TS) = \infty$. In other words, there exists
an infinite dimensional subspace $E \subset \A$ so that $TS|_E = I_E$.
By Lemma \ref{l:C*alg WK}, 
$E$ isomorphic to a Hilbert space, and
there exists a projection $P$ from $\A$ onto $E$. It is easy to check
that $S(E)$ is isomorphic to a Hilbert space, and complemented in $X$
via a projection $SPT$.
\end{proof}

\section{Strictly singularity of the adjoint operator}\label{s:duality}

In this section, we determine which conditions need to be imposed on an
operator $T$ to guarantee the strict singularity of $T^*$. First consider
von Neumann algebras and their preduals.

\begin{proposition}\label{p:SS SS* predual}
For a von Neumann algebra $\A$, the following are equivalent: \\
(1) $\iSS^*(\A_*) \subset \iSS(\A_*)$, (2) Either $\A$ is purely atomic, or $\A$ is type $I$ finite.
\end{proposition}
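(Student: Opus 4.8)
The plan is to prove the two implications by rather different arguments, matching the two alternatives in (2). The common starting point, valid for any von Neumann algebra, is that $\iSS^*(\A_*)\subseteq\iWK(\A_*)$: by \cite{Ai} one has $\iSS^*(\A_*)\subseteq\iSCS(\A_*)$, and Proposition \ref{p:cont in WK}, applied with $X=\A_*$, gives $\iSCS(\A_*,\A_*)\subseteq\iWK(\A_*)$. Thus every $T\in\iSS^*(\A_*)$ is automatically weakly compact, and the question is reduced to deciding when a weakly compact operator with strictly singular adjoint must itself be strictly singular.

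For $(2)\Rightarrow(1)$ I would split into the two cases. If $\A$ is type $I$ finite, then $\A_*$ has the Dunford--Pettis Property (the equivalence $(1)\Leftrightarrow(2)$ of Proposition \ref{p:predual DPP}); by the observation recorded in Subsection \ref{ss:DPP}, every weakly compact operator on $\A_*$ is then strictly singular, so $\iSS^*(\A_*)\subseteq\iWK(\A_*)\subseteq\iSS(\A_*)$. If instead $\A$ is purely atomic, then $\A_*\cong(\sum_i\cs_1(H_i))_{\ell_1}$, and I would prove that this space is subprojective; Whitley's theorem (\cite{Whi}, quoted in Section \ref{s:prelim}) then yields $\iSS^*(\A_*)\subseteq\iSS(\A_*)$ at once. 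The subprojectivity is the main technical input of this direction: one first checks that each $\cs_1(H)$ is subprojective, and then passes to the $\ell_1$-sum by a gliding-hump dichotomy — a normalized sequence in an infinite-dimensional subspace either has mass escaping to infinitely many coordinates, in which case it spans a complemented $\ell_1$, or, up to a small perturbation, is supported on finitely many summands $\cs_1(H_i)$, where the finite-sum (hence again $\cs_1$) case applies.

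For $(1)\Rightarrow(2)$ I would argue by contraposition, exhibiting $T\in\iSS^*(\A_*)\setminus\iSS(\A_*)$ whenever $\A$ is neither purely atomic nor type $I$ finite. Two features of $\A_*$ drive the construction. First, as $\A$ is not type $I$ finite, $\A_*$ contains a complemented copy $E$ of $\ell_2$ (the failure of $(3)$ in Proposition \ref{p:predual DPP}); let $P_E$ be a projection onto $E$. Second, as $\A$ is not purely atomic, $\A_*$ contains a complemented copy of $L_1[0,1]$: when $\A$ is not of type $I$ this comes from \cite{MaN} (which puts $\mathcal R_*$, $\mathcal R$ the hyperfinite $II_1$ factor, as a complemented subspace of $\A_*$, and $\mathcal R_*$ carries a complemented $L_1[0,1]$ via the trace-preserving conditional expectation onto a diffuse abelian subalgebra), while when $\A$ is of type $I$ with a summand of diffuse centre one compresses by a rank-one projection to obtain a conditional expectation onto a diffuse abelian subalgebra. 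Inside this $L_1[0,1]$ I take the closed span $F$ of a sequence of independent Gaussians, so that $F\cong\ell_2$ and the inclusion $j:F\hookrightarrow L_1[0,1]$ is an isomorphic embedding; fixing an isomorphism $\gamma:E\to F$, I set $T=J_F\,\gamma\,P_E$, where $J_F=\iota\, j$ and $\iota:L_1[0,1]\hookrightarrow\A_*$ is the complemented inclusion.

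Since $T|_E=J_F\gamma$ is an isomorphism, $T$ fixes $E$ and is not strictly singular. To see that $T^*$ is strictly singular, write $T^*=P_E^*\gamma^*\,j^*\,\iota^*$. Here $\iota^*:\A\to L_\infty[0,1]$ is bounded, while $j^*:L_\infty[0,1]\to F^*\cong\ell_2$ takes values in a reflexive space and is therefore weakly compact; since $L_\infty[0,1]$ has the Dunford--Pettis Property, $j^*$ is a Dunford--Pettis operator into a reflexive space, hence strictly singular (any fixed subspace would be reflexive, and a normalized weakly null sequence there would be sent to a norm-null sequence, contradicting boundedness below). As $\iSS$ is an operator ideal, $T^*=(P_E^*\gamma^*)\,j^*\,\iota^*$ is strictly singular as well, so $T\in\iSS^*(\A_*)\setminus\iSS(\A_*)$ and (1) fails. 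The hardest parts will be the two structural inputs rather than this bookkeeping: proving subprojectivity of $(\sum_i\cs_1(H_i))_{\ell_1}$ in the purely atomic case, and producing the complemented $L_1[0,1]$ for every non-purely-atomic $\A$ (the type $III$ situation being absorbed into the reduction to $\mathcal R_*$).
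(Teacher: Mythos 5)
Your proof is correct and follows essentially the same route as the paper's: the same case split, subprojectivity of $(\sum_i \cs_1(H_i))_1$ plus Whitley's theorem in the purely atomic case, the Dunford--Pettis Property of $\A_*$ in the type $I$ finite case, and, for the converse, a projection onto a complemented copy of $\ell_2$ composed with an embedding of $\ell_2$ into a copy of $L_1(0,1)$ sitting inside $\A_*$. The only cosmetic differences are that the paper deduces strict singularity of the adjoint from Grothendieck's theorem (the adjoint of the embedding is $2$-summing, hence finitely strictly singular) rather than from the DPP of $L_\infty$, and that it only requires an isomorphic -- not complemented -- copy of $L_1(0,1)$ (cited from \cite{OS}), so your extra work on complementation is unnecessary though harmless.
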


\begin{proof}
If $\A$ is purely atomic, then we can write it as $(\sum_i B(H_i))_\infty$.
In this case, $\A_* = (\sum_i \cs_1(H_i))_1$ is subprojective as $\ell_1$-sum of subprojective spaces, by \cite{OS_SP}.
By \cite[Theorem 2.2]{Whi}, $\iSS^*(\A_*) \subset \iSS(\A_*)$.
If $\A$ is type $I$ finite, and $T \in \iSS^*(\A_*)$, then, by Proposition~\ref{p:SS subs WK},
$T$ is weakly compact. But $\A$ has the Dunford-Pettis Property, hence
$T$ is strictly singular.

Now suppose $T$ is neither purely atomic nor type $I$ finite.
By Proposition \ref{p:predual DPP}, $\A_*$ contains a subspace $E$
isomorphic to $\ell_2$, complemented by a projection $P$. Furthermore, by
\cite[Section 2.2]{OS}, $\A_*$ contains a subspace $F$, isomorphic to $L_1(0,1)$.
Let $u : E \to F$ be an isomorphism.
Then $T = u P$ is not strictly singular. However, $T^*$ factors through $u^*$,
and the latter is $2$-summing, hence finitely strictly singular by Remark~\ref{FSS-2-sum}.
\end{proof}

\begin{proposition}\label{p:WK SS* SS}
For a von Neumann algebra $\A$, the following are equivalent:\\
(1) $\A$ is of type $I$ finite, (2) $\iWK(\A_*) \subset \iSS(\A_*)$,  (3) $\iWK(\A_*) \subset \iSS^*(\A_*)$.
\end{proposition}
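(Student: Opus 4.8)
The plan is to prove the three equivalences $(1) \Leftrightarrow (2) \Leftrightarrow (3)$ by establishing $(1) \Rightarrow (3) \Rightarrow (2) \Rightarrow (1)$, leveraging the fact that type $I$ finite von Neumann algebras have preduals with the Dunford--Pettis Property (from the discussion after Subsection~\ref{ss:DPP}), together with the duality machinery already assembled in this section. The key structural fact to exploit is that $\A$ is type $I$ finite precisely when $\A_*$ has the DPP, and in that regime weak compactness of an operator forces strict singularity (by the DPP observation in Subsection~\ref{ss:DPP}).

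For $(1) \Rightarrow (3)$, I would argue as follows. Suppose $\A$ is type $I$ finite and let $T \in \iWK(\A_*)$. Since $\A_*$ has the DPP, the DPP observation gives $T \in \iSS(\A_*)$; but I must produce $T \in \iSS^*(\A_*)$, i.e. $T^* \in \iSS(\A)$. Since $T$ is weakly compact, $T^*$ is weakly compact as well, so $T^* \in \iWK(\A)$. Now $\A$ is type $I$ finite, hence (by Theorem~\ref{thm:IN=WK}) $\iWK(\A) = \iSS(\A)$, which places $T^* \in \iSS(\A)$, i.e. $T \in \iSS^*(\A_*)$. This gives $(1) \Rightarrow (3)$ cleanly.

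For $(3) \Rightarrow (2)$, I would use the inclusion $\iSCS^*(X,Y) \subset \iSS(X,Y)$ and its companion $\iSS^*(X,Y) \subset \iSCS(X,Y)$ recorded in the introduction (from \cite{Ai}), combined with Proposition~\ref{p:cont in WK}: for any $T \in \iSS^*(\A_*)$ we have $T \in \iSCS(\A_*)$, and since $\iSCS(\A_*) \subset \iWK(\A_*)$ by Proposition~\ref{p:cont in WK}, $T$ is weakly compact. But here I actually want the reverse direction, so instead I would observe that $\iWK(\A_*) \subset \iSS^*(\A_*)$ together with $\iSS^*(\A_*) \subset \iSCS(\A_*) \subset \iWK(\A_*)$ forces $\iWK(\A_*) = \iSS^*(\A_*) = \iSCS(\A_*)$; then applying the adjoint inclusion once more and the fact that these reflexive-type arguments collapse under weak compactness, I would deduce $\iWK(\A_*) \subset \iSS(\A_*)$. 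The cleanest route is: any $T$ witnessing $(3)$ is in $\iSS^*(\A_*) \subset \iSCS(\A_*)$, and I combine this with Proposition~\ref{p:cont in WK} to conclude the chain of equalities, from which $(2)$ follows.

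The main obstacle, and the heart of the proof, will be $(2) \Rightarrow (1)$, which I would establish by contraposition: assuming $\A$ is \emph{not} type $I$ finite, I construct an operator in $\iWK(\A_*) \setminus \iSS(\A_*)$. By Proposition~\ref{p:predual DPP}, failure of type $I$ finiteness means $\A_*$ contains a complemented copy of $\ell_2$, say via an isomorphism $j : \ell_2 \to E \subset \A_*$ and a projection $P : \A_* \to E$. The identity-like operator $T = jP : \A_* \to \A_*$ (composed with the embedding of $E$ back into $\A_*$) is manifestly not strictly singular since it is an isomorphism on $E \cong \ell_2$; the delicate point is ensuring $T \in \iWK(\A_*)$, which I would secure by routing $T$ through the reflexive space $\ell_2$ so that $T$ factors as $\A_* \xrightarrow{P} \ell_2 \xrightarrow{\iota} \A_*$ and is therefore weakly compact (any operator factoring through a reflexive space is weakly compact). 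This yields $T \in \iWK(\A_*) \setminus \iSS(\A_*)$, contradicting $(2)$ and completing the cycle. The subtlety to watch is that the projection $P$ and embedding genuinely compose to a weakly compact operator, which follows from reflexivity of $\ell_2$, so no further estimates are needed.
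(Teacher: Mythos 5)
Your cycle $(1)\Rightarrow(3)\Rightarrow(2)\Rightarrow(1)$ has a genuine gap at its first and second links; only $(2)\Rightarrow(1)$ (which matches the paper's argument via Proposition \ref{p:predual DPP} and factorization through $\ell_2$) is sound. The problem with $(1)\Rightarrow(3)$ is that you invoke Theorem \ref{thm:IN=WK} to get $\iWK(\A)=\iSS(\A)$ from hypothesis $(1)$. But the paper carefully distinguishes \emph{finite type $I$} (finite sums of type $I_n$ algebras), which is what Theorem \ref{thm:IN=WK} characterizes, from the strictly larger class \emph{type $I$ finite} (algebras of the form $(\sum_n \A_n)_\infty$ with $\A_n$ of type $I_n$), which is what statement $(1)$ asserts. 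For $\A=(\sum_n M_n)_\infty$, which is type $I$ finite but not of finite type $I$, Theorem \ref{thm:IN=WK} says the classes $\iWK(\A)$ and $\iSS(\A)$ are \emph{different} (such an $\A$ contains a complemented copy of $B(\ell_2)$, hence a complemented copy of $\ell_2$, and the projection onto it is weakly compact but not strictly singular). So "$T^*$ weakly compact $\Rightarrow T^*$ strictly singular" fails exactly in the cases that matter. The paper's proof of $(1)\Rightarrow(3)$ instead exploits the fact that $T^*$ is weak$^*$ continuous: by Lemma \ref{l:C*alg WK}, if $T^*$ is weakly compact and not strictly singular, the Hilbertian subspace $Y\subset\A$ it fixes is complemented by a \emph{weak$^*$ continuous} projection, which then yields a complemented infinite-dimensional Hilbertian subspace of $\A_*$, contradicting the DPP of $\A_*$. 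The weak$^*$ continuity is the essential ingredient you are missing.

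The step $(3)\Rightarrow(2)$ also does not close as written. From $T\in\iSS^*(\A_*)$ the inclusion $\iSS^*\subset\iSCS$ gives $T\in\iSCS(\A_*)$, not $T\in\iSS(\A_*)$; and the companion inclusion $\iSCS^*\subset\iSS$ requires $T^*\in\iSCS(\A)$, which you have not established — knowing $T\in\iSCS(\A_*)$ says nothing about $T^*$. Your chain of inclusions therefore never produces membership in $\iSS(\A_*)$, and the phrase about "reflexive-type arguments collapsing under weak compactness" is not an argument. The paper sidesteps any direct implication between $(2)$ and $(3)$: it proves $(1)\Rightarrow(2)$ directly from the DPP of $\A_*$, and shows $\lnot(1)$ implies both $\lnot(2)$ and $\lnot(3)$ by exhibiting the projection onto a complemented copy of $\ell_2$ in $\A_*$, whose adjoint is likewise non--strictly singular. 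I would recommend restructuring your proof along those lines: prove $(1)\Rightarrow(2)$ and $(1)\Rightarrow(3)$ separately (the latter via Lemma \ref{l:C*alg WK}), and handle $(2)\Rightarrow(1)$ and $(3)\Rightarrow(1)$ simultaneously by the contrapositive you already have.
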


\begin{proof}
$(1) \Rightarrow (2)$ is an immediate consequence of $\A_*$ having the DPP
(by \cite{Bu, CI}, $\A_*$ has the DPP if and only if  $\A$ (1) holds).
To show $\lnot (1) \Rightarrow \lnot (2) \wedge \lnot (3)$, note that, if
$\A$ is not type $I$ finite, then, by the proof of Proposition \ref{p:predual DPP},
$\A_*$ contains a complemented copy of $\ell_2$.
A projection onto this copy of $\ell_2$
produces $T \in B(\A_*)$ so that neither $T$ nor $T^*$ are strictly singular.
To show $\lnot (3) \Rightarrow \lnot (1)$, consider
$T \in \iWK(\A_*) \backslash \iSS^*(\A_*)$. By Lemma \ref{l:C*alg WK},
there exists a weak$^*$ closed $Y \subset \A$, isomorphic to $\ell_2$ and
complemented by a weak$^*$ continuous projection, 
so that $T^*|_Y$ is an isomorphism.
Thus, in particular, $\A_*$ contains a complemented copy of $\ell_2$,
contradicting the DPP.
\end{proof}

Now turn to $C^*$-algebras.

\begin{proposition}\label{p:C* DP sep}
If $\A$ is a separable $C^*$-algebra with the Dunford-Pettis Property, then
$\iSS^*(\A) \subset \iSS(\A)$.
\end{proposition}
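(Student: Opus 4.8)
The plan is to argue by contraposition: assuming $T \in B(\A)$ is \emph{not} strictly singular, I will produce an infinite-dimensional subspace of $\A^*$ on which $T^*$ is an isomorphism, so that $T^* \notin \iSS(\A^*)$, i.e. $T \notin \iSS^*(\A)$. The first move is to exploit the hypothesis through Corollary \ref{c:comp l2}: since $\A$ has the Dunford--Pettis Property, $\A$ contains no complemented copy of $\ell_2$. Feeding $T$ into Theorem \ref{t:c_0,l_2 SS}, the ``complemented $\ell_2$'' alternative is ruled out, so $T$ must fix a copy of $c_0$; that is, there is a subspace $E \subset \A$ with $E \cong c_0$ and $T|_E$ an isomorphism onto $Z := T(E)$.

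The second step is to complement the \emph{image}. Here is the only place separability is used: $Z \cong c_0$ is a separable subspace of the separable space $\A$, so Sobczyk's theorem (see \cite{AK}) furnishes a bounded projection $Q : \A \to Z$. Writing $\iota_E : E \hookrightarrow \A$ for the inclusion, set $B = (T|_E)^{-1} Q \in B(\A,E)$. Since $Q$ restricts to the identity on $Z = T(E)$, a direct check gives $B T \iota_E = I_E$. (Note that I do not need $E$ itself to be complemented, nor the image copy to be complemented in the sense discussed in Remark \ref{r:c_0,l_2 SS}; it is automatic here because the image is a copy of $c_0$ in a separable space.)

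The third step is pure duality. Dualizing $B T \iota_E = I_E$ yields $\iota_E^* \, T^* \, B^* = I_{E^*}$, a factorization of the identity on $E^* \cong \ell_1$ through $T^*$. From this identity the standard bounded-below estimate $\|\xi\| \leq \|\iota_E^*\|\,\|T^*\|\,\|B^*\xi\|$ shows that $B^*$ is an isomorphism onto $W := B^*(E^*)$, a copy of $\ell_1$; and the estimate $\|\xi\| \leq \|\iota_E^*\|\,\|T^* w\|$ for $w = B^*\xi \in W$ shows that $T^*$ is bounded below on $W$. Hence $T^*$ is an isomorphism on the infinite-dimensional subspace $W$, so $T^*$ is not strictly singular, completing the contrapositive.

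I expect no serious obstacle here: the only non-formal ingredient is making $T(E)$ complemented, and that is handed to us by Sobczyk's theorem precisely because $\A$ is separable and the fixed copy is $c_0$ rather than $\ell_2$. The conceptual content is simply the interplay of two facts already available in the paper: the DPP forces the fixed copy to be $c_0$ (via Corollary \ref{c:comp l2} and Theorem \ref{t:c_0,l_2 SS}), and a complemented $c_0$ in the domain forces the adjoint to fix $\ell_1$.
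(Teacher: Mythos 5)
Your proposal is correct and follows essentially the same route as the paper's proof: rule out the complemented-$\ell_2$ alternative of Theorem \ref{t:c_0,l_2 SS} via the DPP, complement the image copy of $c_0$ by Sobczyk's theorem, and then run the Whitley-type duality argument to see that $T^*$ fixes a copy of $\ell_1$. The only difference is that you write out explicitly the dualization step that the paper delegates to ``reasoning as in the proof of \cite[Theorem 2.2]{Whi}'', and your computation there is accurate.
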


\begin{proof}
Consider $T \in B(\A) \backslash \iSS(\A)$. As $\A$ cannot contain complemented copies of $\ell_2$,
Theorem~\ref{t:c_0,l_2 SS} implies that $T$ fixes a subspace $E$, isomorphic to $c_0$.
By Sobczyk's Theorem, $T(E)$ is complemented in $\A$. Reasoning as in the proof of
\cite[Theorem 2.2]{Whi}, we conclude that $T^*$ is not strictly singular.
\end{proof}

\begin{proposition}\label{p:C* no DP}
If a non-scattered $C^*$-algebra $\A$ fails the Dunford-Pettis Property, then there exists a
non-strictly singular $T \in B(\A)$ with finitely strictly singular dual.
\end{proposition}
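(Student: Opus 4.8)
The plan is to mirror the construction in the proof of Proposition~\ref{p:SS SS* predual}, but in the dual direction: I will build $T$ as an isomorphism on a complemented copy of $\ell_2$ inside $\A$, arranged so that its adjoint factors through a Grothendieck-type $2$-summing operator.

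First I would extract the two structural ingredients from the hypotheses. Since $\A$ fails the DPP, Corollary~\ref{c:comp l2} gives a subspace $E \subset \A$ isomorphic to $\ell_2$ and complemented by a bounded projection $Q : \A \to E$; fix an isomorphism $\theta : E \to \ell_2$. Since $\A$ is non-scattered, the discussion in Section~\ref{s:prelim} shows that $\A$ contains an isomorphic copy of $C[0,1]$; as $C[0,1]$ is isometrically universal for separable Banach spaces, $\A$ therefore contains a subspace $F$ isomorphic to $L_1(0,1)$. Fix an isomorphism $\phi : F \to L_1(0,1)$ and write $j_F : F \hookrightarrow \A$ for the inclusion.

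The key point is that $\ell_2$ embeds isomorphically into $L_1(0,1)$ (for instance, via the closed span of a sequence of independent Gaussians, which is isometric to $\ell_2$). Let $w_0 : \ell_2 \to L_1(0,1)$ be such an embedding and set $w = \phi^{-1} w_0 : \ell_2 \to F$. I then define $T = j_F\, w\, \theta\, Q \in B(\A)$. On $E$ this reads $T|_E = j_F\, w\, \theta$, a composition of isomorphic embeddings, so $T|_E$ is an isomorphism and $T$ is not strictly singular. It then remains to verify that $T^*$ is finitely strictly singular. Taking adjoints gives $T^* = Q^*\, \theta^*\, w^*\, j_F^*$, where $j_F^* : \A^* \to F^*$ is the restriction map and $w^* : F^* \to \ell_2$. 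Since $\phi w = w_0$, one has $w^* = w_0^*\,(\phi^*)^{-1}$, where $(\phi^*)^{-1} : F^* \to L_1(0,1)^* = L_\infty(0,1)$ is an isomorphism and $w_0^* : L_\infty(0,1) \to \ell_2$. As $L_\infty(0,1)$ is a commutative $C^*$-algebra and $\ell_2$ is a Hilbert space, Grothendieck's Theorem shows that $w_0^*$ is $2$-summing; hence so is $w^*$, and therefore so is $T^*$ by the ideal property of $2$-summing operators. By Remark~\ref{FSS-2-sum}, $T^*$ is finitely strictly singular, completing the construction.

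I expect the main obstacle to be conceptual rather than computational: recognizing that the copy of $\ell_2$ should be routed through a copy of $L_1$ sitting inside $\A$ (available precisely because $\A$ is non-scattered and hence contains $C[0,1] \supseteq L_1$), since it is the $C(K)$-type dual $L_\infty$ that triggers Grothendieck's Theorem and makes the adjoint $2$-summing. The remaining work --- producing the complemented $\ell_2$, embedding $\ell_2$ into $L_1$, and the adjoint bookkeeping --- is routine once this factorization is in place.
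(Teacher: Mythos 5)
Your construction is correct and follows essentially the same route as the paper's: both obtain a complemented copy $E\cong\ell_2$ from the failure of the DPP, compose the projection onto $E$ with an embedding of $E$ into a commutative piece of $\A$ supplied by non-scatteredness, and then invoke Grothendieck's theorem to see that the adjoint is $2$-summing, hence finitely strictly singular by Remark~\ref{FSS-2-sum}. The only difference is cosmetic: the paper embeds $E$ isometrically into the $C^*$-subalgebra $C(\Delta)\subseteq\A$ and applies Grothendieck to the adjoint $C(\Delta)^*\to\ell_2$ (an operator from an $L_1$-space to a Hilbert space), whereas you route $E$ through a copy of $L_1(0,1)$ inside the $C[0,1]$-copy and apply Grothendieck to $L_\infty(0,1)\to\ell_2$; the detour through $L_1$ is harmless but unnecessary, since $\ell_2$ already embeds into $C[0,1]$ and the adjoint of that embedding is already $2$-summing.
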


\begin{proof}
By Corollary \ref{c:DP_WK}, $\A$ contains a subspace $E$, complemented via
a projection $P$, and isomorphic to $\ell_2$. Furthermore, by \cite{Ku},
$C(\Delta)$ embeds into $\A$ as a $C^*$-subalgebra (here $\Delta$ is the Cantor set).
Let $i : E \to C(\Delta) \subseteq \A$ be an isometry. Then $T = iP$ is not strictly
singular. However, $C(\Delta)^*$ is an abstract $L_1$-space, and therefore,
by Grothendieck's Theorem, $i^*$ is $2$-summing, hence finitely strictly singular
by Remark~\ref{FSS-2-sum}. Thus, $T^* = P^* i^*$ is finitely strictly singular.
\end{proof}

Propositions \ref{p:C* DP sep} and \ref{p:C* no DP} yield:

\begin{corollary}\label{c:sep C* T->T*}
Suppose $\A$ is a separable non-scattered $C^*$-algebra. Then $\A$ has
the Dunford-Pettis Property if and only if $\iSS^*(\A) \subset \iSS(\A)$.
\end{corollary}

For some $C^*$-algebras $\A$, we can establish the equality $\iSS^*(\A) = \iSS(\A)$.

\begin{proposition}\label{p:duality}
For any Hilbert space $H$, $\iSS(\cs_\infty(H)) = \iSS^*(\cs_\infty(H))$.
\end{proposition}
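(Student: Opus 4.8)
The plan is to prove the two inclusions $\iSS^*(\cs_\infty(H)) \subseteq \iSS(\cs_\infty(H))$ and $\iSS(\cs_\infty(H)) \subseteq \iSS^*(\cs_\infty(H))$ separately, keeping in mind that $\cs_\infty(H)^* = \cs_1(H)$, so that ``$T \in \iSS^*(\cs_\infty(H))$'' means precisely ``$T^* \in \iSS(\cs_1(H))$''; thus the proposition is the assertion that $T$ is strictly singular if and only if $T^*$ is. The first inclusion is the soft one: by \cite{OS_SP} the space $\cs_\infty(H) = \iK(H)$ is subprojective, so Whitley's theorem \cite{Whi} (already invoked in the proof of Proposition \ref{p:SS SS* predual}) gives $\iSS^*(\cs_\infty(H)) \subseteq \iSS(\cs_\infty(H))$ at once.

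For the reverse inclusion, I would take $T \in \iSS(\cs_\infty(H))$. By Proposition \ref{p:SS subs WK}, $T$ is weakly compact, so $T^*$ is a weakly compact, weak$^*$-continuous operator on $\cs_1(H) = \cs_\infty(H)^*$. Suppose, toward a contradiction, that $T^* \notin \iSS(\cs_1(H))$, i.e.\ $T^*$ is an isomorphism on some infinite-dimensional $W \subseteq \cs_1(H)$; weak compactness of $T^*$ forces $W$ to be reflexive. Since $\cs_1(H)$ is subprojective \cite{OS_SP}, I pass to a subspace $W_0 \subseteq W$ isomorphic to $\ell_2$ and complemented in $\cs_1(H)$, and being reflexive $W_0$ is weak$^*$-closed by \cite[Lemma 6.44]{FHetc}. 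As $T^*$ is weak$^*$-continuous and $W_0$ weak$^*$-closed, $T^*|_{W_0}$ is weak$^*$-to-weak$^*$ continuous and bounded below; dualizing, its preadjoint is the composition $qT \colon \cs_\infty(H) \to E_0$, where $q$ is the quotient map of $\cs_\infty(H)$ by $(W_0)_\perp$ and $E_0 \cong \ell_2$ is the predual of $W_0$. Being the preadjoint of a bounded-below weak$^*$-continuous map, $qT$ is surjective.

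The remaining, and genuinely delicate, task is to convert this surjection of $\cs_\infty(H)$ onto $\ell_2$ into an honest copy of $\ell_2$ inside $\cs_\infty(H)$ on which $T$ is an isomorphism, which then contradicts $T \in \iSS(\cs_\infty(H))$. This is the step I expect to be the main obstacle: a surjection onto $\ell_2$ need not split over an arbitrary domain, so one cannot merely lift a right inverse of $qT$. The route around it is to upgrade the complementation of $W_0$ to a weak$^*$-continuous one. Since $\cs_1(H) = B(H)_*$ is the predual of a von Neumann algebra and $T^*$ is weak$^*$-continuous and weakly compact, the weak$^*$-continuous projection technique underlying Lemma \ref{l:C*alg WK} and the last part of Theorem \ref{t:c_0,l_2 SS} applies to the reflexive, weak$^*$-closed $W_0$, yielding a weak$^*$-continuous projection $\Pi$ of $\cs_1(H)$ onto $W_0$. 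Its preadjoint $\Pi_*$ is then a bounded projection of $\cs_\infty(H)$ onto a copy of $E_0 \cong \ell_2$, and a routine preadjoint computation (using that $T^*|_{W_0}$ is bounded below and $\Pi T^* \Pi = T^*$ on $W_0$) shows that $T$ is bounded below on $\Pi_*(\cs_\infty(H))$. This exhibits a copy of $\ell_2$ in $\cs_\infty(H)$ fixed by $T$, the desired contradiction; hence $T^* \in \iSS(\cs_1(H))$, that is $T \in \iSS^*(\cs_\infty(H))$, and the two inclusions together give the claimed equality.
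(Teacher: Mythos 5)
Your first inclusion, $\iSS^*(\cs_\infty(H)) \subseteq \iSS(\cs_\infty(H))$ via subprojectivity of $\cs_\infty(H)$ and Whitley's theorem, is exactly the paper's argument (the paper cites \cite{Fr} for subprojectivity). The gap is in the converse, at precisely the step you flag as delicate. You propose to produce a weak$^*$-continuous projection onto the weak$^*$-closed copy $W_0 \cong \ell_2$ inside $\cs_1(H)$ by invoking ``the technique underlying Lemma \ref{l:C*alg WK}.'' But that lemma, and Lemma \ref{l:adjoint} behind it, require the \emph{domain} of the weakly compact operator to be a von Neumann algebra: their proofs rest on Jarchow's factorization of weakly compact operators on $C^*$-algebras through Hilbert spaces and on the Takesaki description of relatively weakly compact subsets of a predual. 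Here the relevant operator is $T^*$ acting on $\cs_1(H)$, which is a predual and not a von Neumann algebra, so none of that machinery is available, and the existence of a weak$^*$-continuous projection onto $W_0$ does not follow. Even granting such a projection $\Pi$, your closing computation is off: the identity $\Pi T^* \Pi = T^*$ on $W_0$ is false in general (nothing forces $T^*(W_0) \subseteq W_0$), and since for $v \in \Pi_*(\cs_\infty(H))$ one only gets $\langle Tv, w\rangle = \langle v, \Pi T^* w\rangle$, the lower bound can degenerate if $T^*(W_0)$ leans into $\ker \Pi$; what one actually needs is a weak$^*$-continuous projection onto the \emph{image} $T^*(W_0)$.

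The paper closes exactly this gap with a structural fact specific to $\cs_1$: by Friedman's theorem \cite[Theorem 2]{Fr}, the copy of $\ell_2$ given by $T^*(X)$ is carried isomorphically by a corner map $Q_{EF} : a \mapsto a - E^\perp a F^\perp$ ($E,F$ finite rank projections) onto a subspace where all Schatten norms are equivalent, whence $\|\cdot\|_1$ and $\|\cdot\|_\infty$ are equivalent on $T^*(X)$ itself. It then takes $Y = (J(T^*X))^\star \subset \cs_\infty(H)$ and shows directly, via the trace pairing $\|Ty\|_\infty \geq \tr\bigl((Ty)x\bigr)$ with $x = c c_0 (T^*)^{-1} J^{-1} y^\star$, that $T$ is bounded below on $Y$. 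If you wish to keep your projection-based outline, the weak$^*$-complementation you need must be extracted from this theorem of Friedman (or an equivalent description of Hilbertian subspaces of $\cs_1$); it cannot be imported from Lemma \ref{l:C*alg WK}.
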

\begin{proof}
By \cite{Fr}, $\cs_\infty(H)$ is subprojective, hence,
by \cite[Theorem~2.2]{Whi}, the strict singularity of $T^*$ implies
the strict singularity of $T$. To prove the converse,
suppose, for the sake of contradiction, that $T$ is strictly singular,
but $T^*$ is not. Then there exists an infinite dimensional $X \subset \cs_1(H)$
so that $\|T^{*}x\| \ge c\|x\|$ for every $x\in X$ (here $c > 0$).
By \cite{Fr}, $X$ contains either $\ell_1$, or $\ell_2$.
By Proposition~\ref{p:SS subs WK}, $T$ is weakly compact, hence so is $T^{*}$.
Thus, by passing to a subspace, if necessary, we can assume $X\approx \ell_2$.
Then $T^*(X)$ is also isomorphic to $\ell_2$.

By \cite[Theorem~2]{Fr} there exist two finite rank projections $E$ and $F$ such that the operator
$
Q_{EF} : \cs_1(H) \to \cs_1(H) : a \mapsto a - E^\perp a F^\perp$ ,
where  $E^\perp = I -E$  and $ F^\perp=I-F$, is an isomorphism when restricted to $T^*X$. Note that on the range 
of $Q_{EF}$ all Schatten norms $\| \cdot \|_p$ are equivalent,  see the proof of \cite[Proposition~1]{Fr} for details.
This, together with the fact that  $Q_{EF}$ is also  bounded as an operator from $\cs_\infty(H)$ to $\cs_\infty(H)$, implies that,  for every $z \in T^* X$,
$$\|z\|_\infty \le \|z\|_1 \leq c_1 \|Q_{EF} z\|_1 \leq  c_1c_2 \|Q_{EF} z\|_\infty \leq 2 c_1 c_2 \|z\|_\infty,$$ for some $c_1,c_2>0$.
 Set $c_0=(2c_1c_2)^{-1}$.
Now consider the space $Y = (J (T^* X))^\star \subset \cs_\infty$ (here and
below, $\star$ stands for taking the adjoint of an operator in $B(H)$,
and $J$ is the formal identity from $\cs_1(H)$ to $\cs_\infty(H)$).
We claim that $T$ is an isomorphism on $Y$.
Indeed, pick $y \in Y$, with $\|y\|_\infty = 1$.
Then $\|J^{-1} y\|_1 \leq c_0^{-1}$, and consequently,
$x = c c_0 (T^*)^{-1} J^{-1} y^\star$ satisfies $\|x\|_1 \leq 1$.
Then
$
\|Ty\|_\infty \geq \tr((Ty) x) = \tr(y (T^*x)) = c c_0 \tr(y (J^{-1} y^\star)) =
c c_0 \|y\|_2^2 \geq c c_0^3 .
$
\end{proof}


\begin{corollary}\label{c:C* SS T=T*}
If $\A$ is a compact $C^*$-algebra, then $\iSS(\A) = \iSS^*(\A)$.
\end{corollary}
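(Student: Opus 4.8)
By \cite{Al} a compact $C^*$-algebra can be written as $\A=(\sum_{i\in I}\iK(H_i))_{c_0}=(\sum_{i\in I}\cs_\infty(H_i))_{c_0}$, so that $\A^{**}=(\sum_i B(H_i))_\infty$ is a (purely atomic) von Neumann algebra and $\A^*=(\sum_i\cs_1(H_i))_{\ell_1}$. The starting observation is that both $\A$ and $\A^*$ are subprojective: $\A$ is a $c_0$-sum of the subprojective spaces $\cs_\infty(H_i)$ and $\A^*$ is an $\ell_1$-sum of the subprojective spaces $\cs_1(H_i)$ (each subprojective by \cite{Fr}), so both are subprojective by \cite{OS_SP}. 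Subprojectivity of $\A$ together with \cite[Theorem 2.2]{Whi} immediately gives one inclusion, $\iSS^*(\A)\subset\iSS(\A)$.

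For the reverse inclusion I would take $T\in\iSS(\A)$; by Proposition \ref{p:SS subs WK}, $T\in\iWK(\A)$. The key reduction is the claim that $T^{**}\in\iSS(\A^{**})$: granting it, one applies \cite[Theorem 2.2]{Whi} to the \emph{subprojective} space $\A^*$ to conclude $T^*\in\iSS(\A^*)$, i.e. $T\in\iSS^*(\A)$, finishing the proof. To establish the claim, note that $T^{**}=(T^*)^*$ is automatically weak$^*$ continuous, and by weak compactness of $T$ (Gantmacher) one has $T^{**}(\A^{**})\subseteq\A$, so $T^{**}$ is itself weakly compact. Suppose $T^{**}$ were not strictly singular. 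Applying Theorem \ref{t:c_0,l_2 SS} to the von Neumann algebra $\A^{**}$ and to the weak$^*$ continuous weakly compact operator $T^{**}$—which for that reason can fix neither $c_0$ nor $\ell_\infty$—yields a copy $Z\cong\ell_2$ inside $\A^{**}$, complemented by a weak$^*$ continuous projection, on which $T^{**}$ is an isomorphism; by Lemma \ref{l:C*alg WK} the image $W=T^{**}(Z)$ is a complemented copy of $\ell_2$, and $W\subseteq\A$ by weak compactness.

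It remains to descend from $\A^{**}$ to $\A$, producing an infinite-dimensional subspace of $\A$ on which $T$ is bounded below and so contradicting $T\in\iSS(\A)$; this is the step I expect to be the main obstacle. The cleanest route is to factor the weakly compact $T$ as $T=BA$ through a reflexive space $R$ (Davis--Figiel--Johnson--Pe{\l}czy\'{n}ski), with $A\colon\A\to R$ of dense range. Then $T^{**}=B^{**}A^{**}$ forces $B$ to be bounded below on the infinite-dimensional subspace $A^{**}(Z)\subset R$, so $B$ is not strictly singular. Since $A(\A)$ is dense in $R$, a perturbation argument (local reflexivity together with a gliding-hump selection) replaces a normalized basis of $A^{**}(Z)$ by vectors $Ax_n$ with $x_n\in\A$, keeping $A$ bounded below on $\overline{\span}[x_n]$ and $B$ bounded below on $\overline{\span}[Ax_n]$; then $T=BA$ is an isomorphism on $\overline{\span}[x_n]\subset\A$. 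The delicate point is precisely this passage, because a weak$^*$ closed reflexive $Z$ need \emph{not} meet $\A$ at all (as already $c_0\subset\ell_\infty$ shows), so the conclusion rests on the perturbation rather than on any direct intersection with $\A$; for the scalar blocks (all $H_i$ finite-dimensional contributions) the statement collapses to Proposition \ref{p:duality}.
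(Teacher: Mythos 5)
Your first half is fine: $\A$ and $\A^*$ are subprojective ($c_0$-, resp.\ $\ell_1$-, sums of the subprojective spaces $\cs_\infty(H_i)$, resp.\ $\cs_1(H_i)$), and Whitley's theorem then gives $\iSS^*(\A)\subset\iSS(\A)$. This matches what the paper does inside Proposition \ref{p:duality}. But your route to the reverse inclusion is genuinely different from the paper's, and it contains a real gap exactly where you suspected. You reduce everything to the claim that $T\in\iSS(\A)\cap\iWK(\A)$ implies $T^{**}\in\iSS(\A^{**})$, i.e.\ (contrapositively) that if $T^{**}$ fixes a subspace $Z\subset\A^{**}$ then $T$ fixes an infinite-dimensional subspace of $\A$. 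This implication is false for general operators --- strict singularity does not pass from $T$ to $T^{**}$ --- so it cannot be had by soft means, and your descent argument does not supply it. Concretely: after factoring $T=BA$ through a reflexive $R$, density of $A(\A)$ in $R$ lets you pick $x_n\in\A$ with $Ax_n$ close to a normalized basis $(w_n)$ of $A^{**}(Z)$, which indeed keeps $B$ bounded below on $\overline{\span}[Ax_n]$; but nothing controls $\|x_n\|_{\A}$, since $A$ is injective with dense range but not open, so approximate preimages may have arbitrarily large norm. If $\|x_n\|\to\infty$ then $A$ (hence $T$) is certainly not bounded below on $\overline{\span}[x_n]$, and the contradiction with $T\in\iSS(\A)$ evaporates. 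Local reflexivity does not repair this: it produces near-isometries $\phi\colon E\to\A$ on finite-dimensional $E\subset Z$ that respect a prescribed finite-dimensional piece of $\A^*$, but it does not guarantee $\|T\phi(e)\|\approx\|T^{**}e\|$ unless one already knows a finite-dimensional norming set for the images, which is circular in an infinite iteration.

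The paper avoids the bidual entirely. It first proves the equality for $\A=\cs_\infty(H)$ (Proposition \ref{p:duality}): there the hard inclusion $\iSS\subset\iSS^*$ is obtained by a direct duality argument in $\cs_1(H)$ --- if $T^*$ is bounded below on some $X\subset\cs_1(H)$, one may take $X\approx\ell_2$ (weak compactness plus Friedman's dichotomy), use the finite-rank corner projections $Q_{EF}$ of \cite{Fr} to make all Schatten norms equivalent on $T^*X$, and then exhibit, via trace duality, a subspace $Y=(J(T^*X))^\star\subset\cs_\infty(H)$ on which $T$ itself is bounded below. The general compact $C^*$-algebra case then follows in two lines, since $\A=(\sum_i\iK(H_i))_{c_0}$ is complemented in $\cs_\infty(H)$ for $H=\oplus_i H_i$, and $T$ is strictly singular iff $TP$ is. If you want to salvage your scheme, you would need to replace the bidual detour by an argument of this trace-duality type that produces the fixed subspace inside $\A$ directly; as written, the passage from $\A^{**}$ down to $\A$ is unproved and is the essential content of the statement.
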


\begin{proof}
Suppose $\A$ is a compact $C^*$-algebra and $T\in B(\A)$. From the representation of $\A$ mentioned in Section~\ref{ss:classes}, there exists a Hilbert space $H$
so that $\A$ is complemented in $\cs_\infty(H)$, via a projection $P$.
The corollary now follows from Proposition \ref{p:duality} and the fact that $T$ is strictly singular if and only if  $TP$ is. 
\end{proof}


\begin{corollary}\label{c:SS SCS IN}
If $\A$ is a compact $C^*$-algebra, then $\iSS(\A) = \iSCS(\A) = \iIN(\A)$.
\end{corollary}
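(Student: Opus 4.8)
The plan is to establish the circular chain of inclusions
$\iSS(\A) \subseteq \iSCS(\A) \subseteq \iIN(\A) \subseteq \iSS(\A)$, which forces all three ideals to coincide. The middle inclusion $\iSCS(\A) \subseteq \iIN(\A)$ is one of the general facts recorded in the introduction, so only two things remain to be proved: $\iSS(\A) \subseteq \iSCS(\A)$ and $\iIN(\A) \subseteq \iSS(\A)$.

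For the first, I would invoke Corollary \ref{c:C* SS T=T*}, which gives $\iSS(\A) = \iSS^*(\A)$ for a compact $C^*$-algebra. Combining this with the general relation $\iSS^*(X,Y) \subseteq \iSCS(X,Y)$ from \cite[p.~394]{Ai} (taken with $X = Y = \A$) yields at once $\iSS(\A) = \iSS^*(\A) \subseteq \iSCS(\A)$.

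The heart of the matter is the remaining inclusion $\iIN(\A) \subseteq \iSS(\A)$, which I would deduce from the subprojectivity of $\A$. As noted in the proof of Corollary \ref{c:C* SS T=T*}, a compact $C^*$-algebra $\A$ is complemented in $\cs_\infty(H)$ for a suitable Hilbert space $H$; since $\cs_\infty(H)$ is subprojective by \cite{Fr} and subprojectivity plainly passes to complemented subspaces, $\A$ is subprojective. To get the inclusion, I argue by contraposition: suppose $T \in B(\A)$ is not strictly singular, so $T$ is an isomorphism on some infinite dimensional $Y \subseteq \A$. Then $T(Y)$ is a closed infinite dimensional subspace, and subprojectivity furnishes an infinite dimensional $W \subseteq T(Y)$ that is complemented in $\A$ by a projection $P$. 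Putting $Z = (T|_Y)^{-1}(W)$ and $S = (T|_Z)^{-1} P \in B(\A)$, where $(T|_Z)^{-1}$ is read as a map $W \to Z \hookrightarrow \A$, one checks that $TSw = w$ for every $w \in W$, so $W \subseteq \ker(I - TS)$ and hence $\dim \ker(I - TS) = \infty$. By \cite[Theorem~7.17]{Ai} this means $T \notin \iIN(\A)$, which gives $\iIN(\A) \subseteq \iSS(\A)$ and closes the loop.

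The main obstacle is simply pinning down the subprojectivity of $\A$ cleanly from \cite{Fr} and the complementation already used in Corollary \ref{c:C* SS T=T*}; once that is secured, the kernel construction is routine and the adjoint inclusion is a one-line consequence of the previously established identity $\iSS(\A) = \iSS^*(\A)$.
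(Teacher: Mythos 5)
Your proof is correct and follows essentially the same route as the paper's: the identity $\iSS(\A)=\iSS^*(\A)$ from Corollary \ref{c:C* SS T=T*}, the subprojectivity of $\A$ inherited from $\cs_\infty(H)$ via \cite{Fr}, and the general inclusions $\iSS^*\subseteq\iSCS\subseteq\iIN$. The only cosmetic difference is that you prove the step $\iIN(\A)\subseteq\iSS(\A)$ by hand from subprojectivity, whereas the paper just cites \cite[Theorem 7.51]{Ai} for the equality $\iSS(\A)=\iIN(\A)$.
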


\begin{proof}
By Corollary \ref{c:C* SS T=T*}, $\iSS(\A) = \iSS^*(\A)$.
By \cite{Fr}, $\cs_\infty(H)$ is subprojective for every $H$, hence so is $\A$.
Therefore, by \cite[Theorem 7.51]{Ai}, $\iSS(\A) = \iIN(\A)$.
By \cite[Theorem 7.44]{Ai}, $\iSCS(\A) \subset \iIN(\A)$. Finally,
$\iSCS(\A) \supset \iSS^*(\A)$, by \cite[Theorem 7.53]{Ai}.
\end{proof}

\begin{remark}\label{r:diff id}
In the dual of a compact $C^*$-algebra, the ideals listed in
Corollary \ref{c:SS SCS IN} need not coincide:
we have
$
\iK(\cs_1) \subsetneq \iSS^*(\cs_1) \subseteq \iSCS(\cs_1) \subsetneq
 \iSS(\cs_1) = \iIN(\cs_1) .
$
Indeed, in \cite{OS_id} we proved $\iIN(\cs_1) = \iSS(\cs_1)$.
By \cite[Chapter 7]{Ai} (or \cite{Whi}),
$\iSS^* \subseteq \iSCS \subseteq \iIN$.
By \cite[Theorem 2.3.1]{AK},
there exists a surjective operator  $T:X \to Y$, where $X$ and $Y$
are complemented subspaces of $\cs_1$ isomorphic to $\ell_1$ and $\ell_2$, respectively.
Clearly, $T$ is a strictly singular operator. But, being surjective,
$T$ is not strictly cosingular. This implies that
$S=TP \in \iSS(\cs_1) \setminus \iSCS(\cs_1)$, where $P$ is a projection from
$\cs_1$ onto $X$. On other other hand, let $i$ be the formal identity
from $X$ to $Y$. Then $U=iP$ is not compact, but $U^* = P^* i^*$ is strictly
singular (due to the fact that $i^*$ is $2$-summing).
\end{remark}

%

\section{ Positive parts of operator ideals}\label{s:incl}

In this section, we consider the inclusions between `positive parts' of operator ideals
(strictly singular, weakly compact, etc.).

 Recall that the positive cone $X_+$ in an ordered real Banach space $X$ is called \emph{generating} if $X = X_+ - X_+$.
In the complex case, we assume that $X_+$ is generating in $\Re X$ (the real part of $X$).
Equivalently,
there exists a constant ${\mathbf{G}}$ so that any $x$ in (the real part of) $X$ can be
written as $x = a - b$, with $a, b \in X_+$, and $\|a\| + \|b\| \leq {\mathbf{G}} \|x\|$.
Examples of spaces with generating cones include Banach lattices, $C^*$-algebras,
von Neumann algebra preduals, non-commutative function spaces, $JB$ and $JB^*$-algebras. The reader is referred to \cite{OS}, and references therein, for more information.

\begin{proposition}\label{c:id in vna*}
(1) If a von Neumann algebra $\A$ is purely atomic and $X$ is an ordered Banach space with a generating cone, then
$\iIN(X, \A_*)_+ = \iSS(X, \A_*)_+ = \iSCS(X, \A_*)_+ =
 \iWK (X, \A_*)_+=\iK(X, \A_*)_{+}$.

(2) If a von Neumann algebra $\A$ is not purely atomic, then
$\big( \iIN(\A_*)_+ \cap \iSS(\A_*)_+ \cap \iSCS(\A_*)_+ \cap \iWK (\A_*)_+ \big)
\backslash \iK(\A_*)_{+}$ is non-empty.
\end{proposition}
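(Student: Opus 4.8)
The plan is to prove the two parts separately, leveraging the structural description of atomic von Neumann algebra preduals together with the ideal inclusions established earlier in the paper.

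For part (1), the goal is to show all the listed positive parts collapse to $\iK(X,\A_*)_+$. Since $\iK \subseteq \iSCS \subseteq \iIN$ and $\iK \subseteq \iFSS \subseteq \iSS \subseteq \iIN$ always hold, and since Proposition~\ref{p:cont in WK} gives $\iSS(X,\A_*), \iSCS(X,\A_*), \iIN(X,\A_*) \subseteq \iWK(X,\A_*)$, the crux is the reverse containment on positive operators: every \emph{positive} weakly compact operator into $\A_*$ is compact. Writing $\A = (\sum_i B(H_i))_\infty$ so that $\A_* = (\sum_i \cs_1(H_i))_1$, I would use the generating-cone hypothesis on $X$ to reduce a general operator to its action on the positive cone $X_+$: any $x$ decomposes as $x = a-b$ with $a,b \in X_+$ and controlled norms, so it suffices to show that a weakly compact $T$ maps $X_+ \cap \ball(X)$ to a relatively \emph{norm}-compact set. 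Here the key structural fact is that $\A_*$ is an $L_1$-type (more precisely $L$-embedded) predual whose positive cone behaves like that of $\ell_1(\bigsqcup_i \cs_1(H_i))$: on such spaces, relative weak compactness of a set of positive elements forces uniform integrability / smallness of tails, which upgrades weak compactness to norm compactness for the positive image. The precise mechanism is that positive weakly compact sets in the predual of a purely atomic von Neumann algebra are relatively norm compact, so $T(X_+ \cap \ball(X))$ is norm-precompact, and the generating-cone decomposition then shows $T(\ball(X))$ is norm-precompact, i.e. $T \in \iK$.

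For part (2), when $\A$ is not purely atomic I must exhibit a single positive operator on $\A_*$ that lies in all of $\iIN, \iSS, \iSCS, \iWK$ but is \emph{not} compact. The natural candidate comes from the fact that a non-purely-atomic $\A$ has a non-atomic part, so $\A_*$ contains an order-isometric (or at least positively complemented) copy of an $L_1(\mu)$ space for a non-atomic measure $\mu$. I would take a positive operator built from a positive projection $P$ onto such a copy of $L_1(0,1)$ composed with a positive non-compact endomorphism of $L_1(0,1)$ — for instance a conditional-expectation-type or convolution-type operator that is weakly compact but not compact. Because $L_1(0,1)$ has the Dunford-Pettis Property and $\A_*$ embeds it as an $L$-summand, such an operator is weakly compact, and weak compactness together with the DPP forces strict singularity, whence (via the inclusions $\iSS \subseteq \iIN$ and the $L_1$-space behavior) it also lands in $\iSCS$ and $\iIN$; yet a weakly compact non-compact operator on $L_1(0,1)$ witnesses non-compactness, and positivity is arranged by construction.

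The main obstacle I anticipate is part (1): proving that positivity genuinely promotes weak compactness to norm compactness in $\A_*$. This is the heart of the matter and requires exploiting the lattice/ordered structure of the atomic predual $(\sum_i \cs_1(H_i))_1$ rather than generic Banach-space arguments — specifically, controlling both the "finitely many blocks" tail behavior (from the $\ell_1$-sum) and the "within each $\cs_1(H_i)$" behavior, where one needs that a relatively weakly compact set of positive trace-class operators with uniformly small tails is norm-compact. The generating-cone reduction is routine once this positive norm-compactness statement is in hand; the delicate step is establishing that statement cleanly, likely by reducing to the scalar $\ell_1$ case and to finite-dimensional $\cs_1(H_i)$ factors and then handling uniform tail estimates.
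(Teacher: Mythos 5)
Your part (1) follows the paper's route: by Proposition \ref{p:cont in WK} everything reduces to showing that a \emph{positive} weakly compact operator into $\A_*=(\sum_i \cs_1(H_i))_1$ is compact, and the generating cone on $X$ reduces that to norm-compactness of the image of the positive part of the ball. The paper disposes of this last step by citing \cite[Theorem~1.5.1]{OS}; your heuristic for it (weak compactness of a set of positive elements of an atomic predual forces norm compactness, via convergence of traces plus an $\ell_1$-tail estimate) is the right mechanism but is left as a sketch rather than a proof.

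Part (2) contains a genuine gap: you never produce the positive operator, and both families of candidates you name would fail. A conditional expectation is a projection onto the $L_1$-space of a sub-$\sigma$-algebra; if it were weakly compact its range would be reflexive, hence finite dimensional, so a ``conditional-expectation-type'' operator is either finite rank or not weakly compact. A convolution operator $g\mapsto f*g$ on $L_1$ of a compact group is always \emph{compact} (the image of the unit ball lies in the closed absolutely convex hull of the norm-compact set of translates of $f$), so it cannot witness non-compactness. The actual difficulty, which you dismiss with ``positivity is arranged by construction,'' is exactly positivity: the natural weakly compact, non-compact operator here is the formal identity $\ell_1\to\ell_2$ realized as $e_i\mapsto r_i$ into the Rademacher span of $L_1(\Delta)$, and this is \emph{not} positive since the $r_i$ are not positive functions. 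The paper's fix is the rank-one positive perturbation $e_i\mapsto r_i+\one$, which is positive, differs from $e_i\mapsto r_i$ by a finite-rank operator (hence irrelevant for membership in every ideal and for non-compactness), and is then precomposed with the positive projection of $\A_*$ onto a disjointly supported $\ell_1$-copy inside the positively complemented $L_1(\Delta)$. Finally, your derivation of strict cosingularity from ``the $L_1$-space behavior'' is not a proof: $\iSS\not\subseteq\iSCS$ even on $\cs_1$ (see Remark \ref{r:diff id}). For the concrete operator one gets it from $\iSS^*\subseteq\iSCS$, because the adjoint of the formal identity, $\ell_\infty\to\ell_2$, is strictly singular.
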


\begin{proof}
(1)  By Proposition~\ref{p:cont in WK},  it suffices to show that  every $T \in \iWK(X, \A_*)_+$  is  compact.  The later follows from \cite[Theorem~1.5.1]{OS}.  

To show (2), 
suppose $\A$ is not purely atomic. In \cite[Section~2.2]{OS} it is shown that $\A_*$ contains
a copy of $L_1(\Delta)$ ($\Delta = \{-1,1\}^\N$ is the Cantor set),
complemented by a positive projection.
Let $\one$ be the identity
function on $\Delta$, and let $r_i$ ($i \in \N$) be the $i$-th coordinate
function (in other words, the $r_i$'s are independent Rademacher functions).
Furthermore, $L_1(\Delta)$ contains a copy of $\ell_1$, generated by disjointly
supported norm one functions $e_i$, and positively complemented. We claim that
the positive operator $T : \ell_1 \to L_1(\Delta) : e_i \mapsto r_i + \one$
is strictly singular, strictly cosingular, and weakly compact, but not compact.
To this end, consider $S : \ell_1 \to L_1(\Delta) : e_i \mapsto r_i$.
Then $T - S$ has rank $1$. The sequence $(r_i)$ is equivalent to the $L_2$ basis,
hence we can think of $S$ as the canonical embedding $i : \ell_1 \to \ell_2$.
This operator is is weakly compact and strictly singular \cite{Mil}. Furthermore,
$i^* : \ell_\infty \to \ell_2$ is strictly singular, hence $i$ is strictly
cosingular.
\end{proof}


\begin{proposition}\label{p:id in cs_infty}
If $\A$ is a compact $C^*$-algebra, then
\\
$\iIN(\A)_+ = \iSS(\A)_+ = \iSCS(\A)_+ = \iWK (\A)_+=\iK(\A)_{+}$.
\end{proposition}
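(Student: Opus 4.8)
The plan is to reduce the whole proposition to the single inclusion $\iWK(\A)_+ \subseteq \iK(\A)_+$, and to obtain that inclusion by passing to adjoints and invoking Proposition \ref{c:id in vna*}(1).

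First I would dispose of the easy part of the chain. By Corollary \ref{c:SS SCS IN} we already have $\iSS(\A) = \iSCS(\A) = \iIN(\A)$, so these three ideals share the same positive part, and it is enough to compare $\iK(\A)_+$, $\iSS(\A)_+$, and $\iWK(\A)_+$. The general inclusion $\iK \subseteq \iSS$ together with Proposition \ref{p:SS subs WK} gives $\iK(\A)_+ \subseteq \iSS(\A)_+ \subseteq \iWK(\A)_+$, so the entire statement follows once I show $\iWK(\A)_+ \subseteq \iK(\A)_+$.

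For this main step, take $T \in \iWK(\A)_+$. Since $\A$ is compact, it is scattered, so (as recorded in Section \ref{ss:classes}) its bidual is the purely atomic von Neumann algebra $\A^{**} = (\sum_i B(H_i))_\infty$, and $\A^*$ is its predual, i.e. $\A^* = (\A^{**})_*$; in particular $\A^*$ is a von Neumann algebra predual and hence an ordered Banach space with generating cone. The adjoint $T^* \in B(\A^*)$ is again positive, because $T$ maps $\A_+$ into $\A_+$ and therefore $T^*$ maps the dual cone $\A^*_+$ into itself, and it is weakly compact by Gantmacher's theorem. Reading $T^*$ as an element of $\iWK(\A^*,(\A^{**})_*)_+$ and applying Proposition \ref{c:id in vna*}(1) with the purely atomic algebra $\A^{**}$ and ambient space $X = \A^*$, I conclude $T^* \in \iK(\A^*,(\A^{**})_*)_+$, so $T^*$ is compact. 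By Schauder's theorem $T$ itself is compact, which is exactly the desired inclusion.

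Chaining the inclusions then gives $\iK(\A)_+ = \iSS(\A)_+ = \iWK(\A)_+$, and combined with the first paragraph all five positive parts coincide. The only genuinely delicate point is the structural identification of $\A^{**}$ as a purely atomic von Neumann algebra with predual $\A^*$ — this is exactly where compactness (through scatteredness) of $\A$ enters and where the hypotheses of Proposition \ref{c:id in vna*}(1), namely the generating cone on $\A^*$ and the positivity and weak compactness of $T^*$, must be verified; everything else is formal manipulation of the established inclusions.
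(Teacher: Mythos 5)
Your proof is correct and follows essentially the same route as the paper: both arguments pass to the adjoint, view $\A^*$ as the predual of the purely atomic von Neumann algebra $\A^{**}$, and invoke Proposition \ref{c:id in vna*}(1) together with Corollary \ref{c:SS SCS IN}. The only (cosmetic) difference is that the paper phrases the duality step via the adjoint ideals $\iSS^*(\A)_+$, etc., and then applies Corollary \ref{c:C* SS T=T*}, whereas you extract only the inclusion $\iWK(\A)_+ \subseteq \iK(\A)_+$ and close the chain with the elementary inclusions $\iK \subseteq \iSS \subseteq \iWK$.
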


\begin{proof}
By Proposition~\ref{c:id in vna*},
$\iIN^*(\A)_+ = \iSS^*(\A)_+ = \iSCS^*(\A)_+ = \iWK (\A)_+=\iK(\A)_{+}$.
By Corollaries \ref{c:C* SS T=T*} and  \ref{c:SS SCS IN},
$\iSS^*(\A) = \iSS(\A) = \iSCS(\A) = \iIN(\A)$.
\end{proof}


The situation is different for non-scattered $C^*$-algebras.

\begin{proposition}\label{p:scatter}
If $\A$ is a non-scattered $C^*$-algebra, then
$\iK(\A)_{+}$ is a proper subset of
$\iSS(\A)_+ \cap \iSCS(\A)_+ \cap \iWK (\A)_+$.
\end{proposition}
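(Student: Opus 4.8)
The plan is to exhibit a single positive operator lying in $\iSS(\A)_+\cap\iSCS(\A)_+\cap\iWK(\A)_+$ but not in $\iK(\A)_+$. First note that by Proposition~\ref{p:SS subs WK} we have $\iSS(\A)\subseteq\iWK(\A)$, so it suffices to produce a positive, non-compact operator on $\A$ that is both strictly singular and strictly cosingular. Since $\A$ is non-scattered, by \cite{Ku} there is a unital abelian $C^*$-subalgebra $\mathcal C\cong C(\Delta)\subseteq\A$, where $\Delta=\{0,1\}^{\N}$ is the Cantor set; let $\mu$ be the product probability measure and $(r_m)$ the Rademacher (coordinate) functions, orthonormal in $L_2(\mu)$. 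I would first build the desired operator on $\mathcal C$ and then extend it positively to $\A$.

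On $\mathcal C$ the idea is a ``jumping Rademacher'' kernel. Choose a continuous map $N:\Delta\to\N\cup\{\infty\}$ (with the one-point-compactification topology on $\N\cup\{\infty\}$) taking each finite value $m$ on a clopen set $C_m$, where the $C_m$ are pairwise disjoint with $\bigcup_m C_m=\Delta\setminus\{\bar 0\}$ and $N(\bar 0)=\infty$; e.g. $C_m=\{x:x_1=\dots=x_{m-1}=0,\ x_m=1\}$. Define $\tau f(x)=\int f\,d\mu+\int f\,r_{N(x)}\,d\mu$, with the convention $r_\infty=0$, i.e. $\tau$ has positive kernel $\mu_x=(1+r_{N(x)})\mu\geq 0$. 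Then $\tau$ is manifestly positive; $\tau f=\int f\,d\mu\cdot\one+\widehat f\circ N$ with $\widehat f(m)=\int f r_m\,d\mu\to 0$, so $\tau f\in C(\Delta)$ and $\|\tau\|\leq 2$. Since the densities are uniformly bounded by $2$, the family $\{\mu_x\}$ is uniformly integrable, hence relatively weakly compact in $M(\Delta)$, so $\tau\in\iWK(\mathcal C)$, and then $\tau\in\iSS(\mathcal C)$ by Lemma~\ref{l:FSS=WK}. It is \emph{not} compact because $\tau r_m=\one_{C_m}$ while $\{r_m\}$ is bounded and $\|\one_{C_m}-\one_{C_{m'}}\|_\infty=1$ for $m\neq m'$. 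For strict cosingularity I would use $\iSS^*\subseteq\iSCS$: writing $\tau=R+\sigma$ with $R$ of rank one and $\sigma f=\sum_m\widehat f(m)\one_{C_m}$, the adjoint $\sigma^*\colon M(\Delta)\to M(\Delta)$ sends $(b_m)\in\ell_1$ to $(\sum_m b_m r_m)\,\mu\in L_1(\mu)$, and by Khintchine this realizes the formal identity $\ell_1\to\ell_2$, which is strictly singular by \cite{Mil}; hence $\tau^*$ is strictly singular and $\tau$ is strictly cosingular.

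Finally I would transfer $\tau$ to $\A$. The representing functionals are the countably many states $\nu_m=(1+r_m)\mu$ on $\mathcal C$ (together with $\mu$), so I would extend each $\nu_m$ to a state $\widetilde\nu_m$ on $\A$, chosen so that $(\widetilde\nu_m)$ is weak$^*$-convergent to some state $\widetilde\mu'$ extending $\mu$, and set $\widetilde\tau a(x)=\widetilde\nu_{N(x)}(a)$. This $\widetilde\tau$ is positive, contractive, has range in $\mathcal C\subseteq\A$, restricts to $\tau$ on $\mathcal C$ (hence is non-compact), and decomposes as $\widetilde\tau=\widetilde R+\widetilde V\widetilde D$ with $\widetilde R$ of rank one, $\widetilde V\colon c_0\to\mathcal C$ the isometric embedding $(b_m)\mapsto\sum_m b_m\one_{C_m}$, and $\widetilde D a=\bigl((\widetilde\nu_m-\widetilde\mu')(a)\bigr)_m\in c_0$. \textbf{The hard part is precisely controlling these extensions so that the new operator stays strictly singular and strictly cosingular on all of $\A$}, since arbitrary Hahn--Banach extensions need not preserve either property. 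The lower estimate $\|\sum_m b_m(\widetilde\nu_m-\widetilde\mu')\|_{\A^*}\geq\|\sum_m b_m r_m\|_{L_1(\mu)}\asymp\|b\|_2$ is automatic by restriction to $\mathcal C$; the task is to engineer the extensions (e.g. via a common dominating state, or a positive/Pietsch-type extension) so that $\{\widetilde\nu_m-\widetilde\mu'\}$ is also relatively weakly compact and $\ell_2$-bounded above in $\A^*$. Granting this, $\widetilde D\in\iWK(\A,c_0)$, and since an infinite-dimensional reflexive or $c_0$ subspace cannot be fixed inside $c_0$, Theorem~\ref{t:c_0,l_2 SS} upgrades this to $\widetilde D\in\iSS(\A,c_0)$, whence $\widetilde\tau$ is strictly singular; dually $\widetilde\tau^*$ again factors the strictly singular map $\ell_1\to\ell_2$, so $\widetilde\tau$ is strictly cosingular, completing the argument.
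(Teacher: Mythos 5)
Your commutative core (the ``jumping Rademacher'' operator $\tau$ on $C(\Delta)$) is correct, but the step you yourself flag as ``the hard part'' is a genuine, unresolved gap, and it is the crux of the whole proposition. Arbitrary state extensions $\widetilde\nu_m$ of $\nu_m=(1+r_m)\mu$ from $\mathcal C$ to $\A$ need not form a relatively weakly compact (equi-integrable) subset of $\A^*$: even a natural choice such as $\widetilde\nu_m=\widetilde\mu(c_m\,\cdot\,c_m)$ with $c_m=(1+r_m)^{1/2}$ gives $\sum_n\widetilde\nu_m(a_n)\leq 1$ for each fixed $m$ but no uniformity in $m$ against a disjoint sequence $(a_n)\subset\A_+$, which is exactly what the Akemann--Takesaki weak compactness criterion requires. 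If $\{\widetilde\nu_m-\widetilde\mu'\}$ fails to be relatively weakly compact, then $\widetilde D$, hence $\widetilde\tau$, is not weakly compact, and by Pfitzner's theorem (Proposition \ref{p:SS subs WK}) a non-weakly-compact operator on a $C^*$-algebra fixes a copy of $c_0$ --- so your operator would not even be strictly singular. In other words, ``granting this'' grants the entire theorem.

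The paper avoids the extension problem altogether: it imports from \cite[Section 2]{OS} a construction valid in any non-scattered $\A$ that directly produces disjoint normalized positive elements $y_n\in[0,y]$ (so $\span[y_n]\cong c_0$) together with functionals $\phi_n$ that are already \emph{order-dominated} by a single $\psi\in\A^*_+$, weak$^*$-convergent to $\phi$ but with no norm-convergent subsequence. The operator $Tx=\phi(x)y+\sum_n(\phi_n-\phi)(x)y_n$ is then squeezed between $0$ and a rank-one positive operator, so weak compactness is automatic by domination; strict singularity follows because the range lies in a copy of $c_0$ (any subspace on which $T$ is an isomorphism would have to contain $c_0$, contradicting weak compactness); and strict cosingularity comes from Eberlein--Smulian together with \cite[Proposition 5]{Pe65-I}, rather than from your $\iSS^*\subseteq\iSCS$ route. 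To repair your argument you would have to manufacture extensions dominated by a common positive functional on $\A$ --- which is essentially reproving the cited construction --- so as written the proposal does not constitute a proof.
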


\begin{proof}
Our goal is to exhibit $T \in B(\A)_+$ which is strictly singular, strictly cosingular, and
weakly compact, but not compact. To this end, we recall a construction from \cite[Section 2]{OS}.
As shown in that paper, there exist $y \in \A_+$, and normalized elements
$y_1, y_2, \ldots \in [0,y]$ with disjoint supports.
Furthermore, there exist $\psi \in \A^\star_{+}$, and a sequence
$(\phi_i) \subset [0, \psi]$ which has no norm convergent subsequences, but
which converges to $\phi$ weak$^*$. We claim that the operator $T$, defined by
\begin{displaymath}
Tx=\phi(x)y+\sum_{n=1}^{\infty}{(\phi_n-\phi)(x)y_n} \, \, (x \in \A)
\end{displaymath}
has the desired properties. Note that $\span[y_n : n \in \N]$ is isometric to $c_0$.
Consequently, the sum in the centered expression above converges in norm, since
$(\phi_n -\phi)(x) \to 0$ for any $x$.

Clearly, $T$ is positive. It is shown in \cite[Section 2]{OS} that $T$ is not compact.
However, there exists a rank $1$ operator $S$ so that $0 \leq T \leq S$.
 By \cite[Section 2]{OS} again, any operator on $\A$ dominated by a weakly compact operator
is weakly compact, hence $T$ must be weakly compact. Moreover, the image of $T$ lies in
$Y = \span[y, y_1, y_2, \ldots]$, which is isomorphic to $c_0$ (or $c$). This implies
the strict singularity of $T$. Indeed, otherwise there would exist a subspace $E \subset \A$,
so that $T|_E$ is an isomorphism. But $T(E)$ is a subspace of $c_0$, hence it must contain
a copy of $c_0$, contradicting the weak compactness of $T$. To show the strict cosingularity
of $T$, note that, by Eberlein-Smulian Theorem, $T(\ball(\A))$ is weakly sequentially compact.
Now apply \cite[Proposition 5]{Pe65-I}.
\end{proof}


\section{Products of strictly singular operators}\label{s:prod}

We apply the results of this paper to generalize a theorem from \cite{Mil}
to a non-commutative setting. For von Neumann algebras, we obtain:

\begin{proposition}\label{p:prod SS}
A von Neumann algebra $\A$ is of finite type $I$ if and only if
the product of any two strictly singular operators on $\A$ is compact.
\end{proposition}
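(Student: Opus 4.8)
The plan is to prove both directions by reducing to Banach-space structure theory already developed in this paper, exploiting the characterization of finite type $I$ algebras and the description of strictly singular operators from Theorem~\ref{t:c_0,l_2 SS}.

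For the forward direction, suppose $\A$ is of finite type $I$. By \cite[Theorem~6.6.5]{KR2}, $\A$ is Banach-space isomorphic to a commutative von Neumann algebra, which is a $C(K)$ space; in particular $\A$ has the Dunford--Pettis Property, and by Lemma~\ref{l:FSS=WK} together with Theorem~\ref{thm:IN=WK} we have $\iSS(\A)=\iWK(\A)$. So let $S,T\in\iSS(\A)$; both are weakly compact. The key observation is that the product of two weakly compact operators on a space with the DPP is compact: if $S$ is weakly compact, then $S(\ball(\A))$ is relatively weakly compact, and since $\A$ has the DPP, $T$ restricted to weakly convergent sequences sends them to norm-convergent sequences, so $TS$ maps the weakly compact set $S(\ball(\A))$ to a norm-compact set. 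Hence $TS\in\iK(\A)$, as desired.

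For the converse, I would argue by contraposition: assuming $\A$ is \emph{not} of finite type $I$, I exhibit two strictly singular operators whose product is not compact. The natural candidate is to produce a single strictly singular operator $T$ that is an isomorphism on a copy of $\ell_2$ (possible precisely because $\A$ fails finite type $I$), and then take the product with a companion operator so that a copy of $\ell_2$ survives as an isomorphism in the composition — the product then cannot be compact since $\ell_2$ is infinite-dimensional. The cleanest route uses Theorem~\ref{thm:IN=WK}: when $\A$ is not of finite type $I$, the proof of that theorem already constructs, inside $\A$, a strictly singular operator arising as a ``formal identity'' $\ell_2\to(\oplus_k\ell_2^{2^{k-1}})_{c_0}$ conjugated by the complete isometry $J:B(\ell_2)\to\A$ and the complete contraction $S:\A\to B(\ell_2)$. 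I would arrange two such strictly singular building blocks — or compose the formal identity with an appropriate strictly singular ``reverse'' map — so that the composite acts as an isomorphism on a copy of $\ell_2$, invoking Theorem~\ref{t:c_0,l_2 SS} to guarantee strict singularity of each factor despite the $\ell_2$-isomorphism surviving in the product.

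The main obstacle is the converse: exhibiting two genuinely strictly singular operators whose product is non-compact. Individually fixing a copy of $\ell_2$ is compatible with strict singularity only in a controlled way (Theorem~\ref{t:c_0,l_2 SS} says a non-strictly-singular operator on $\A$ must fix $c_0$ or a complemented $\ell_2$), so I must ensure each factor is strictly singular while their composition still restricts to an isomorphism on an infinite-dimensional subspace. The delicate point is to route the two factors through \emph{different} decompositions — e.g. one factor mapping $\ell_2$ ``into'' the $c_0$-sum of finite-dimensional pieces and the second mapping back in a way that recovers the $\ell_2$-norm — so that neither factor alone fixes an infinite-dimensional subspace (keeping each strictly singular) yet the product does. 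Verifying that each factor is strictly singular, rather than merely non-compact, is where the careful estimate lies, and I expect to lean on the finite-strict-singularity of the formal identity already established in the proof of Theorem~\ref{thm:IN=WK}.
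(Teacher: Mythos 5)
Your forward direction is correct and is exactly the paper's argument: finite type $I$ gives a Banach-space isomorphism with a commutative von Neumann algebra, i.e.\ a $C(K)$ space, so $\iSS(\A)=\iWK(\A)$ and the Dunford--Pettis Property makes the product of two weakly compact operators compact.

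The converse, however, rests on a conceptual error that sinks the stated plan. You propose ``a strictly singular operator $T$ that is an isomorphism on a copy of $\ell_2$'' and, for the product, that ``their composition still restricts to an isomorphism on an infinite-dimensional subspace.'' Both are impossible by definition: if $TS|_Z$ is bounded below on an infinite-dimensional $Z$, then $\|Sx\|\geq\|T\|^{-1}\|TSx\|$ shows $S|_Z$ is bounded below, so $S$ is not strictly singular. Since $\iSS$ is an operator ideal, $TS$ is itself strictly singular whenever one factor is; the only way the product can fail to be compact is the way a strictly singular operator fails to be compact, i.e.\ without fixing any infinite-dimensional subspace. The paper's construction does precisely this: as $\A$ is not of finite type $I$, it contains a complemented copy $E$ of $\ell_2$ (inside a complemented copy of $B(\ell_2)$) and an isometric copy of $\ell_\infty$ (hence a copy of $\ell_1$, which yields a $2$-summing quotient $q:\A\to\ell_2$). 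One sets $S=i_2 q$ with $i_2:\ell_2\to E$ a surjective isometry ($S$ is strictly singular because $q$ is $2$-summing, hence finitely strictly singular), and $T=i_\infty\, i\, i_2^{-1}P$ with $P$ the projection onto $E$, $i:\ell_2\to\ell_\infty$ the formal identity (finitely strictly singular by Milman, so $T$ is strictly singular), and $i_\infty:\ell_\infty\to\A$ an isometric embedding. Then $TS=i_\infty\, i\, q$ is not compact because $q$ is surjective and $i$ is not compact, yet $TS$ fixes no infinite-dimensional subspace. Beyond the misconception, your proposal also never actually produces the two factors, so the converse direction remains unproved as written.
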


\begin{proof}
If $\A$ is of finite type $I$, then it is Banach isomorphic to a $C(K)$ space.
By \cite[Chapter 5]{AK}, $\iSS(\A) = \iWK(\A)$. Moreover, $\A$ has the
Dunford-Pettis Property, hence the product of any two weakly compact
operators is compact.

Now suppose $\A$ is not of finite type $I$. As noted in the proof of
Theorem \ref{thm:IN=WK}, $\A$ contains a complemented copy of $B(\ell_2)$.
Consequently, there exists a surjective isometry $i_2 : \ell_2 \to E$, and a contractive
projection $P$ from $\A$ onto $E$. Furthermore, $\A$ contains a copy of $\ell_\infty$,
hence also a copy of $\ell_1$. By \cite[Theorem 4.16]{DJT}, there exists a $2$-summing
quotient $q : \A \to \ell_2$. Let $i$ be the formal embedding of $\ell_2$
into $\ell_\infty$, and let $i_\infty$ be an isometric embedding of $\ell_\infty$ into $\A$.
Note that $q$ and $i$ are finitely strictly singular, 
the former due to it being $2$-summing, and the latter by \cite[Lemma 2]{Mil}.
Let $T = i_\infty i i_2^{-1} P$ and $S = i_2 q$. Then $TS$ is not compact.
\end{proof}

For $C^*$-algebras, the situation is not so clear.

\begin{proposition}\label{p:C* prod SS}
Suppose $\A$ is a $C^*$-algebra.
\begin{enumerate}
 \item
If $\A$ has the Dunford-Pettis property, then the product of any two strictly
singular operators on $\A$ is compact.
\item If $\A$ is a compact algebra,  then the product of any two strictly
singular operators on $\A$ is compact.
\item
If $\A$ fails the Dunford-Pettis Property, and is not scattered, then there
exist $S, T \in \iSS(\A)$ so that $TS$ is not compact.
\end{enumerate}
\end{proposition}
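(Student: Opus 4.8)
**Plan for proving Proposition \ref{p:C* prod SS}.**

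The plan is to prove the three parts in sequence, reusing as much of the machinery already developed as possible, since parts (1) and (2) will follow quickly from Dunford-Pettis-type arguments while part (3) will require an explicit construction analogous to the non-commutative examples built earlier.

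For part (1), I would argue exactly as in the ``finite type $I$'' case of Proposition \ref{p:prod SS}. By Corollary \ref{c:DP_WK}, every strictly singular operator on $\A$ is weakly compact, so if $S, T \in \iSS(\A)$ then both are weakly compact. Since $\A$ has the Dunford-Pettis Property, every weakly compact operator out of $\A$ is Dunford-Pettis; composing a weakly compact operator (whose image lies in a weakly compact, hence weakly sequentially compact, set) with a Dunford-Pettis operator sends weakly null sequences to norm null sequences and maps the resulting relatively compact set to a relatively compact set, so $TS$ is compact. The only care needed is to phrase this so that ``product of weakly compact operators is compact under DPP'' is applied with the correct domain; this is standard and I would simply cite the relevant characterization of DPP from Subsection \ref{ss:DPP}.

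For part (2), a compact $C^*$-algebra is of the form $(\sum_{i} \iK(H_i))_{c_0}$ and need not have the DPP (when some $H_i$ is infinite dimensional), so part (1) does not apply directly; instead I would use the strong structural results from Section \ref{s:duality}. By Corollary \ref{c:SS SCS IN}, $\iSS(\A) = \iSCS(\A) = \iIN(\A)$, and by Corollary \ref{c:C* SS T=T*}, $\iSS(\A) = \iSS^*(\A)$. The idea is that on such $\A$ the strictly singular operators are so constrained (via the subprojectivity of $\cs_\infty(H)$ used throughout Section \ref{s:duality}) that a non-compact product would force an isomorphic copy of $\ell_2$ on which both factors act as isomorphisms, and then one factor would fail to be strictly singular by the $T = T^*$ duality. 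I would realize this by embedding $\A$ complementedly in $\cs_\infty(H)$ (as in the proof of Corollary \ref{c:C* SS T=T*}) and reducing the claim to $\cs_\infty(H)$, where Proposition \ref{p:duality} and the Schatten-norm equivalence on suitable ranges give the needed compactness. \emph{This is the step I expect to be the main obstacle,} since compact $C^*$-algebras sit strictly between the DPP case and the general case, and one must verify that the product-compactness survives the passage through the complemented embedding into $\cs_\infty(H)$ without introducing a copy of $\ell_2$ on which both $S$ and $T$ remain isomorphisms.

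For part (3), I would mimic the construction in Proposition \ref{p:prod SS}. Since $\A$ fails the DPP, Corollary \ref{c:comp l2} gives a subspace $E \subset \A$ isomorphic to $\ell_2$ and complemented by a projection $P$; since $\A$ is not scattered, by \cite{Ku} the algebra $C(\Delta)$ embeds into $\A$ as a $C^*$-subalgebra. The plan is to build $S$ as a strictly singular (indeed $2$-summing, via Grothendieck's Theorem on the abstract $L_1$-space $C(\Delta)^*$) map that nonetheless composes with a second strictly singular operator to yield the identity on $\ell_2$, so the product fixes a copy of $\ell_2$ and is therefore non-compact. Concretely, I would use a $2$-summing quotient $q : \A \to \ell_2$ (available because $\A$ contains $\ell_1$, which it does since it contains $C(\Delta) \supset c_0$) together with an isomorphism of $E$ onto $\ell_2$, arranging $S = (\text{embedding of } \ell_2 \text{ into } \A) \circ q$ and $T$ to be the projection-type map onto $E$ composed back to $\ell_2$, so that $TS$ restricts to an isomorphism on a copy of $\ell_2$. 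The individual factors are strictly singular by Remark \ref{FSS-2-sum} and Lemma \ref{l:FSS=WK}-style reasoning, while $TS$ is not compact because it fixes $\ell_2$.
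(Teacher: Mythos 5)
Part (1) of your proposal is correct and is essentially the paper's argument. The difficulties are in parts (2) and (3).

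In part (2), the pivotal claim --- that a non-compact product $TS$ ``would force an isomorphic copy of $\ell_2$ on which both factors act as isomorphisms'' --- is precisely the statement that needs proof, and your sketch does not supply it: non-compactness of $TS$ only yields a bounded sequence $(x_i)$ with $(TSx_i)$ separated, not a subspace on which $TS$ is bounded below. The paper closes this gap by passing to the dual. It first proves (Lemma \ref{l:milman}) that on $\A^*=(\sum_i\cs_1(H_i))_1$ the product of any two strictly singular operators is compact: one applies Rosenthal's $\ell_1$ theorem to each of $(x_i)$, $(Sx_i)$, $(TSx_i)$, rules out the $\ell_1$ alternative using strict singularity, reduces to weakly null sequences by differencing, and then invokes \cite[Theorem 1]{AL} to extract subsequences equivalent to the $\ell_2$ basis --- which is what finally makes each factor an isomorphism on a copy of $\ell_2$ and gives the contradiction. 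This $\ell_2$-subsequence dichotomy is available in $\ell_1$-sums of $\cs_1(H_i)$ but not in $\cs_\infty$-sums (where weakly null sequences may be of $c_0$ type), which is why the paper transfers the problem to $\A^*$ via Corollary \ref{c:C* SS T=T*} (so $S^*,T^*\in\iSS(\A^*)$ and $(TS)^*=S^*T^*$ is compact) rather than reducing to $\cs_\infty(H)$ as you propose. Your route through Proposition \ref{p:duality} and ``Schatten-norm equivalence'' does not produce this step.

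In part (3) your construction cannot work as described, for a structural reason: with $S=i_2q$ and $q$ a $2$-summing quotient, $S$ is finitely strictly singular, so $TS$ is strictly singular for \emph{every} $T$, and hence $TS$ can never ``restrict to an isomorphism on a copy of $\ell_2$.'' The non-compactness has to come from openness of the quotient map ($q(\ball(\A))$ contains a ball of $\ell_2$) combined with a subsequent map sending the unit vector basis to a separated sequence, not from fixing a subspace. Moreover, the $T$ you describe (``the projection-type map onto $E$ composed back to $\ell_2$'') is an isomorphism on $E$ and therefore not strictly singular. The paper repairs both defects simultaneously by inserting the formal identity $i:\ell_2\to\ell_p$ with $p>2$, followed by an isometry $i_p:\ell_p\to C[0,1]\subset\A$: then $T=i_pi\,i_2^{-1}P$ is strictly singular because $i$ is (by \cite{Mil}), while $TS=i_pi\,q$ is not compact because $i\,q(\ball(\A))$ contains the $i$-image of a ball of $\ell_2$, which is separated in $\ell_p$. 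This use of an intermediate $\ell_p$, $2<p<\infty$, sitting inside the copy of $C[0,1]$ guaranteed by non-scatteredness is the ingredient missing from your plan.
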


\begin{lemma}\label{l:milman}
If $\A$ is a compact $C^*$-algebra, then for any $T, S \in \iSS(\A^*)$,
$TS$ is compact.
\end{lemma}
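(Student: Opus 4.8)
The plan is to show that every strictly singular operator on $\A^*$ is simultaneously weakly compact and Dunford--Pettis, and then to use the elementary fact that if $S$ is weakly compact and $T$ is Dunford--Pettis, then $TS$ is compact: a bounded sequence is sent by $S$ into a relatively weakly compact set, which $T$ in turn sends to a relatively compact set. Since this applies to either ordering of the pair, it yields $TS \in \iK(\A^*)$ for all $T,S \in \iSS(\A^*)$. To make the subspace structure of the Schatten classes available, I would first reduce to $\cs_1(H)$. By the representation recalled in the proof of Corollary~\ref{c:C* SS T=T*}, $\A$ is complemented in $\cs_\infty(H)$ for a suitable Hilbert space $H$; dualizing the inclusion and the projection exhibits $\A^*$ as a complemented subspace of $\cs_\infty(H)^* = \cs_1(H)$, say via an embedding $\iota^*$ and a projection $P^*$ with $\iota^* P^* = \mathrm{id}_{\A^*}$. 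Conjugating $T$ and $S$ produces operators $\hat T = P^* T \iota^*$ and $\hat S = P^* S \iota^*$ in $B(\cs_1(H))$, which are again strictly singular (conjugation by an isomorphic embedding and its left inverse preserves strict singularity) and satisfy $\hat T \hat S = P^*(TS)\iota^*$; hence $TS$ is compact exactly when $\hat T \hat S$ is. Thus it suffices to treat operators on $\cs_1(H)$, and weak compactness is then immediate: applying Proposition~\ref{p:cont in WK} to the von Neumann algebra $B(H)$, whose predual is $\cs_1(H)$, gives $\iSS(\cs_1(H)) \subset \iWK(\cs_1(H))$.

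The heart of the argument, and the step I expect to be the main obstacle, is that every $R \in \iSS(\cs_1(H))$ is Dunford--Pettis. Suppose not: then there is a normalized weakly null sequence $(x_n)$ with $\|R x_n\| \geq \delta > 0$, and $(R x_n)$ is again weakly null since $R$ is bounded. Here I would invoke the subspace structure of $\cs_1(H)$ from \cite{Fr}, in its sequential (Kadec--Pelczynski type) form: a normalized weakly null sequence in $\cs_1(H)$ has a subsequence equivalent to the unit vector basis of $\ell_2$. The $\ell_1$ alternative is excluded, since the $\ell_1$-basis is not weakly null and so no weakly null sequence can contain an $\ell_1$-subsequence. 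Passing first to a subsequence along which $(x_n)$ is $\ell_2$-equivalent, and then to a further subsequence along which $(R x_n)$ is $\ell_2$-equivalent, one obtains, for all scalars $(a_n)$, both $\|\sum a_n x_n\| \asymp (\sum |a_n|^2)^{1/2}$ and $\|R(\sum a_n x_n)\| = \|\sum a_n R x_n\| \asymp (\sum |a_n|^2)^{1/2}$. Hence $R$ is bounded below on the closed span $[x_n] \approx \ell_2$, i.e.\ $R$ fixes a copy of $\ell_2$, contradicting strict singularity. The delicate point is exactly this sequential dichotomy: one must pass from Friedman's description of infinite-dimensional \emph{subspaces} of $\cs_1(H)$ to the statement that the weakly null sequence \emph{itself} has an $\ell_2$-subsequence, rather than merely that its span contains some copy of $\ell_2$. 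I expect this to require a small extraction argument, and I would flag that the failure of the Dunford--Pettis Property for $\cs_1(H)$ (when $\dim H = \infty$) shows that strict singularity, not just weak compactness, is genuinely needed here.

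Finally, transporting the two properties back through $\iota^*$ and $P^*$, any $T, S \in \iSS(\A^*)$ are both weakly compact and Dunford--Pettis; combining these as in the first paragraph gives that $TS$ is compact. This completes the proof.
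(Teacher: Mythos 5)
Your proof is correct, and it reaches the conclusion by a genuinely different packaging than the paper's, although both rest on the same key external input. The step you flag as delicate --- that a seminormalized weakly null sequence in $\cs_1(H)$ has a subsequence equivalent to the unit vector basis of $\ell_2$ --- is exactly the sequential (Kadec--Pelczynski type) dichotomy of Arazy and Lindenstrauss \cite[Theorem~1]{AL}, which the paper cites for precisely this purpose; as you observe, the $\ell_1$ alternative in their dichotomy is ruled out by weak nullness, so no further extraction argument is needed beyond passing to subsequences first for $(x_n)$ and then for $(Rx_n)$. Where the two arguments differ is in organization. The paper argues by contradiction: it extracts from a non-compact $TS$ a sequence $(x_i)\subset\ball(\A^*)$ with $(TSx_i)$ separated, applies Rosenthal's $\ell_1$ theorem to each of $(x_i)$, $(Sx_i)$, $(TSx_i)$, and in each branch of the resulting case analysis uses \cite{AL} to produce $\ell_2$-subsequences contradicting the strict singularity of $S$ or $T$. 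You instead isolate the inclusion $\iSS(\cs_1(H))\subseteq \iWK(\cs_1(H))\cap\iDP(\cs_1(H))$ --- weak compactness from Proposition~\ref{p:cont in WK} applied to $B(H)$, the Dunford--Pettis property of the operator from \cite{AL} --- and then invoke the elementary fact that a Dunford--Pettis operator composed with a weakly compact one is compact. This replaces the Rosenthal case analysis (in particular the case where $(x_i)$ spans $\ell_1$) by the weak compactness of $S$, and it yields a cleaner intermediate statement of independent interest. The reduction to $\cs_1(H)$ via complementation is sound and matches what the paper does in Corollary~\ref{c:C* SS T=T*}; note only that in your prose the roles are swapped ($P^*$ is the embedding of $\A^*$ into $\cs_1(H)$ and $\iota^*$ its left inverse, consistent with your identity $\iota^*P^*=\mathrm{id}_{\A^*}$), a purely cosmetic slip.
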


\begin{proof}
Suppose, for the sake of contradiction, that $TS$ is not compact.
Then there exists a sequence $(x_i) \subset \ball(\A^*)$ so that
$\inf_{i>1} \dist (TS x_i, \span[ TS x_j : j < i ]) > 0.$
Applying Rosenthal's $\ell_1$ theorem
(see e.g. \cite[Theorem 10.2.1]{AK}) and passing to a subsequence,
we can assume that, for each of the sequences $(x_i)$, $(Sx_i)$, and
$(TSx_i)$, one of the following is true: (i) the sequence is weakly Cauchy, or
(ii) the sequence is equivalent to the $\ell_1$-basis.

Suppose first $(x_i)$ is weakly Cauchy. Then we can assume
that $(x_i)$ is weakly null. Indeed, let $x_i^\prime = (x_{2i} - x_{2i-1})/2$,
and observe that
$\inf_{i>1} \dist (TS x_i^\prime, \span[ TS x_j^\prime : j < i ]) > 0.$
Consequently, the sequences $(Sx_i)$ and
$(TSx_i)$ are weakly null as well. Passing to a further subsequence,
we can assume that all three sequences are basic. By passing
to a subsequence, and invoking \cite[Theorem 1]{AL}, we can assume that
all three sequences listed above are equivalent to the $\ell_2$-basis,
which contradicts the strict singularity of $T$ and $S$.

Now suppose $(x_i)$ is equivalent to the $\ell_1$ basis. Due to the strict
singularity of $S$, $(Sx_i)$ cannot be equivalent to the $\ell_1$-basis,
hence it is weakly Cauchy. Moreover, we can assume that $(Sx_i)$ is weakly
null: as before, we pass to the sequence $x_i^\prime = (x_{2i} - x_{2i-1})/2$,
which is equivalent to the $\ell_1$-basis. Then $(TSx_i)$ is weakly null as well.
Passing to a subsequence as before, we obtain that both $(Sx_i)$ and $(TSx_i)$
are equivalent to the $\ell_2$-basis, contradicting the strict singularity of $T$.
\end{proof}

\begin{proof}[Proof of Proposition \ref{p:C* prod SS}]
(1)
The product of two weakly compact operators on a space with the DPP
is compact. By \cite{Pf}, any strictly singular operator on a $C^*$-algebra is weakly compact.

(2) Let $T$ and $S$ be strictly singular operators on
$\A=(\oplus_i \cs_\infty(H_i))_{c_0}$.
By Corollary \ref{c:C* SS T=T*},
$T^*$ and $S^*$ are strictly singular on $\A^*$. 
By Lemma~\ref{l:milman}, $S^*T^*$ is compact.
This implies the compactness of $TS$.

(3) Suppose $\A$ is not scattered, and has infinite dimensional
irreducible representations.
As in the proof of Proposition \ref{p:prod SS}, we need to construct
$S, T \in \iSS(\A)$ so that $TS$ is not compact. As noted in
Section \ref{s:prelim}, $\A$ contains a copy of $C[0,1]$, hence also
a copy of $\ell_1$. By \cite[Theorem 4.16]{DJT}, we can find
a $2$-summing quotient map $q : \A \to \ell_2$. By \cite{Rob}, there exists an
isometry $i_2$ from $\ell_2$ to $E \subset \A$, so that $E$ is complemented by
a projection $P$. For a fixed $p \in (2,\infty)$, consider an isometry
$i_p : \ell_p \to C[0,1] \subset \A$, and the formal identity $i : \ell_2 \to \ell_p$.
Then $S = i_2 q$ and $T = i_p i i_2^{-1} P$ have the desired properties.
\end{proof}

\bibliographystyle{amsplain}


\end{document}